\newtheorem{lemma}{Lemma}
\newtheorem{definition}{Definition}
\newtheorem{theorem}{Theorem}
\newtheorem{remark}{Remark}
\newtheorem{proposition}{Proposition}
\newtheorem{problem}{Problem}
\begin{document}

\title{Participation Factors for Nonlinear Autonomous Dynamical Systems in the Koopman Operator Framework\footnote{This work was partially supported by JST PRESTO Grant Number JPMJPR1926, JSPS KAKENHI Grant Number 23H01434, JST Moonshot R\&D Grant Number JPMJMS2284, and NSF Grant Number 2328241.}~\footnote{This work has been submitted to the IEEE for possible publication. Copyright may be transferred without notice, after which this version may no longer be accessible.}
}
\author{Kenji Takamichi\footnote{He was with the Department of Electrical and Information Systems, Osaka Prefecture University, Sakai, 599-8531 Japan. 
Presently, he works in the industry (e-mail: ktakamichi.public@gmail.com).}, 
Yoshihiko Susuki\footnote{He is with the Department of Electrical Engineering, Kyoto University, Kyoto, 615-8510 Japan (e-mail: susuki.yoshihiko.5c@kyoto-u.ac.jp).}, and
Marcos Netto\footnote{He is with the Helen and John C. Hartmann Department of Electrical and Computer Engineering, New Jersey Institute of Technology, Newark, NJ 07102 USA (e-mail: marcos.netto@njit.edu).}
}
\date{}
\maketitle

\begin{abstract}
We devise a novel formulation and propose the concept of modal participation factors to nonlinear dynamical systems. The original definition of modal participation factors (or simply participation factors) provides a simple yet effective metric. It finds use in theory and practice, quantifying the interplay between states and modes of oscillation in a linear time-invariant (LTI) system. In this paper, with the Koopman operator framework, we present the results of participation factors for nonlinear dynamical systems with an asymptotically stable equilibrium point or limit cycle. We show that participation factors are defined for the entire domain of attraction, beyond the vicinity of an attractor, where the original definition of participation factors for LTI systems is a special case. Finally, we develop a numerical method to estimate participation factors using time series data from the underlying nonlinear dynamical system. The numerical method can be implemented by leveraging a well-established numerical scheme in the Koopman operator framework called dynamic mode decomposition. 
\end{abstract}


\section{Introduction}
\label{sec:introduction} 

The Participation Factor (PF) is a metric that quantifies the mutual impacts between states and modes in a dynamical system, originally introduced as a component in the selective modal analysis for Linear Time-Invariant (LTI) systems \cite{PF1_1982, PF2_1982}. 
Because of its practical value in the power industry, e.g., model reduction \cite{Chow2013} or the placement of controllers \cite{Tzounas2020}, this metric has garnered the interest of many researchers over the years. 
There is a fundamental difference between the measure of participation of modes in states (mode-in-state PF) and that of participation of states in modes (state-in-mode PF) \cite{PF1_1982, Hashlamoun2009}. 
The mode-in-state PF appears as the coefficient in the dynamics of the state variable ($x_k$ defined later), whereas the state-in-mode PF appears as the coefficient in the dynamics of the modal variable ($z_j$). 
The two concepts of PF are of technological importance when the state and modal variables have individual physical meanings such as in the power grid systems. 
Note that Generalized Participation (GP) \cite{Pagola1989} extends the notion of mode-in-state PF. 
Indeed, to complement and improve the original definition of PFs, many studies have proposed alternative interpretations \cite{Garofalo2002} and definitions \cite{Tzounas2020, Abed2000, Hashlamoun2009, Song2019, Dassios2020, Iskakov2021}. 
Irrespective of a range of points of view, PFs are well-accepted indices to analyze the dynamic performance of LTI systems.

Generalizing the concept of PFs for nonlinear systems is a long-standing problem in systems theory. 
One motivation stems from power systems, where a nonlinear generalization has been required to accurately evaluate the grid's dynamic performance under a highly stressed condition: see, e.g., \cite{Sanchez-Gasca2005}.
A successful approach is to consider the higher-order terms of the Taylor series expansion of a target nonlinear system in a neighborhood of an Equilibrium Point (EP) \cite{Sanchez-Gasca2005, Hamzi2014, Tian2018, Hamzi2020}. 
This class of methods includes the normal forms for vector fields or Poincar\'{e} linearization and makes it possible to extend existing definitions of PFs. 
However, the generalized PFs in \cite{Sanchez-Gasca2005, Hamzi2014, Tian2018, Hamzi2020} are still defined \emph{locally} in the state space because of the Taylor series expansion around an EP. 
These methods are, therefore, restricted to the neighborhood of an EP. 
Indeed, they assume the existence of an EP and do not apply to nonlinear systems with global attractors, e.g., systems with an asymptotically stable Limit Cycle (LC). 
Extending the concept of PFs \emph{globally} in the basin of attraction is an open problem in modern nonlinear systems theory. 

\begin{table*}[t] 
\centering
\caption{Participation factors for linear systems \cite{PF1_1982, PF2_1982} and for nonlinear systems developed in this paper. }
\label{Table_Intro}
\footnotesize
\begin{tabular}{ccc}
\hline \hline 
& Linear systems \cite{PF1_1982, PF2_1982} & Nonlinear systems\\
\hline \noalign{\vskip 0.5mm}
System model & $\dot{\bf x} = {\bf A}{\bf x},\quad{\bf x}\in\mathbb{R}^n$ &  $\dot{\bf x} = {\bf F}({\bf x}),\quad{\bf x}\in\mathbb{R}^n$  \\ \hline 
Modal variable $z_j$  &  ${\bf u}_j^\top {\bf x}$  &  $\phi_j({\bf x})$ \rule[0mm]{0mm}{3.5mm}  \\
& (left eigenvector ${\bf u}_j$ of $\bf A$) & (eigenfunction $\phi_j({\bf x})$ of the Koopman operator ${\cal U}^t$) \\ \noalign{\vskip 0.5mm}
\hline \noalign{\vskip 1mm}
Participation factors & $u_{{jk}}
v_{{jk}}
$  & $\displaystyle \frac{\partial \phi_j}{\partial x_k}({\bf x}) V_{{jk}}$  \\ 
of state $x_k$ and mode $z_j$ & (right eigenvector ${\bf v}_j$ of ${\bf A}$) & (Koopman mode ${\bf V}_j$ for ${\cal U}^t$, corresponding to \\
& & the inner product of $\phi_j({\bf x})$ and $f({\bf x})=x_k$) \\ \noalign{\vskip 0.5mm}
\hline
Domain & entire state space & whole domain of attraction of a {stable} \\
& &  Equilibrium Point (EP) or Limit Cycle (LC) \\
\hline \hline
\end{tabular}
\end{table*}

This paper introduces a new formulation of PFs and GP that is valid globally in the basin of attraction of nonlinear systems. 
The proposed method is derived from the Koopman operator framework of nonlinear dynamical systems: see, e.g., \cite{Mezic2013, KoopmanBook, Otto2021, Bevanda2021, Brunton2022}, and Section~\ref{sec:KOtheory} of this paper. 
The Koopman operator is a linear, infinite-dimensional operator defined for a broad class of nonlinear dynamical systems.  
Even if the governing dynamics of an underlying nonlinear system are finite-dimensional, the Koopman operator is infinite-dimensional and does not rely on linearization; indeed, it captures the complete information of the nonlinear system. 
Of particular interest here, the point spectrum (eigenvalues) of the Koopman operator provides a complete spectral characterization of nonlinear systems with asymptotically stable EPs and LCs (see \cite{Mezic2013, Mauroy2013, Mezic2020}), which is valid in the whole domain of attraction. 
This paper relies on this fact to define the concept of PFs and GP for nonlinear systems.
Also, we introduce a new concept: \emph{state-in-mode} GP, which arises when we target a nonlinear system. 
The key ideas of this paper on PFs are summarized in Table~\ref{Table_Intro}, whose details will be explained later.

The main contributions of this paper are as follows. 
First, in Section~\ref{sec:LS}, we provide a novel interpretation of the PFs in \cite{PF1_1982, PF2_1982} and GP in \cite{Pagola1989} in terms of their variational dynamics. 
The idea of variational dynamics has been widely utilized in the classical theory of nonlinear oscillations \cite{Parker1989}, ergodic theory of chaos including the Lyapunov exponent \cite{Pikovsky2016}, the theory of phase-amplitude reduction \cite{Shirasaka2017}, and nonlinear systems theory, e.g., contraction theory \cite{Tsukamoto2021}.  
The variational idea is crucial to establish the concept of PFs and GPs for nonlinear systems in this paper. 
By virtue of this reinterpretation, we show that the classical concept of PFs and GPs defined for LTI systems is a special case of the concept proposed in this paper for nonlinear systems.

Second, in Section~\ref{sec:NLS}, following the spectral characterization of the Koopman operator \cite{Mezic2013, Mauroy2013, Mezic2020}, we provide novel definitions of the PFs and GPs. 
This becomes possible by combining the variational interpretation above with the spectral characterization, technically, eigenfunctions of the Koopman operator, referred to as the Koopman eigenfunctions, and Koopman Mode Decomposition (KMD) \cite{Mezic2013, Mezic2005, Rowley2009}. 
We thereby show that the novel PFs and GPs are defined globally in the whole domain of attraction of a stable EP or LC. 
This unlocks a range of applications for PFs and GPs in systems and control. 
For example, it becomes possible to evaluate the PFs for a state far from an attractor and the time evolution of PFs along a state's trajectory, which is necessary for transient stability and stabilization of power systems \cite{Iskakov2021, Takamichi2022, Zheng2022, Kotb2017}. 

Third, in Section~\ref{sec:NA}, we develop a numerical method to estimate the PFs proposed in this paper. 
The proposed PFs in Table~\ref{Table_Intro} require the estimation of Koopman eigenfunctions $\phi_j({\bf x})$, which is still a challenging issue in the Koopman operator framework, especially, for large-scale problems such as power systems \cite{Netto2021}.  
The method in this paper adopts the so-called variational equation \cite{Parker1989} and hence makes it possible to numerically estimate the proposed PFs directly from time series data \emph{without} estimating the Koopman eigenfunctions.
It can be implemented with a well-established numerical scheme in the Koopman operator framework--Dynamic Mode Decomposition (DMD) \cite{Kutz2016}. 
Ultimately, the proposed method entails a simple linear regression problem. 
In contrast, existing approaches based on local approximation may suffer from a highly nonlinear numerical problem \cite{Sanchez-Gasca2005, Netto2019}.

We now distinguish this paper from the authors' previous works \cite{Takamichi2022, Netto2019}.
In \cite{Netto2019}, the KMD was utilized for the first time to formulate the PFs for nonlinear systems.
However, the definitions of PFs in \cite{Netto2019} are clearly different from those defined in this paper. 
A crucial distinction is the state's dependency: in \cite{Netto2019}, by taking an expectation over initial states like \cite{Hashlamoun2009},  we obtained the \emph{state-independent} expression for the PFs while in this paper, by the variational approach, we theoretically formulate the \emph{state-dependent} PFs.
In \cite{Takamichi2022}, for nonlinear systems, we derived the state-dependent PF in the sense of mode-in-state, using the KMD. 
There, we explored the notion of an infinitesimal change to the state's evolution.
This paper is the substantially extended version of \cite{Takamichi2022}. 
This paper explicitly formulates the PFs and GPs for linear systems from the perspective of variational dynamics and newly develops the state-in-mode PF and generalized GPs which have yet to be reported in \cite{Takamichi2022}. 
Consequently, this paper achieves the unified definitions of PFs and GPs for nonlinear systems.
Further, we newly devise a numerical method to estimate the PFs without requiring the estimation of Koopman eigenfunctions. 

The rest of this paper is organized as follows.
Section~\ref{sec:PS} contains the classical theory of PFs and GPs and formulates the problem in this paper.
Section~\ref{sec:LS} reinterprets the PFs and GPs for LTI systems from the viewpoint of variational dynamics. 
By combining the variational viewpoint with the Koopman operator framework, Section~\ref{sec:NLS} proposes PFs and GPs for nonlinear systems; this is the main result of this paper.  
In Section~\ref{sec:examples}, the proposed PFs and GPs are demonstrated with simple models of the nonlinear systems.
Section~\ref{sec:NA} develops a numerical method to estimate the proposed PFs and demonstrates it in the same examples in Section~\ref{sec:examples}. 
Section~\ref{sec:outro} concludes this paper with a summary and future directions. 

\emph{Notation---}The sets of all real, complex, integer, and natural numbers are denoted by $\mathbb{R}$, $\mathbb{C}$, $\mathbb{Z}$, and $\mathbb{N}$, respectively. 
The union of $\mathbb{N}$ and $\{0\}$ is denoted by $\mathbb{N}_0$. 
The imaginary unit is denoted by ${\rm i}:=\sqrt{-1}$.
The bold font, for example, $\bf x$ and $\bf A$, is used for column vectors and matrices.  
The transpose operation of vectors is denoted by $\top$. 
For vector $\bf x$ and indexed vector ${\bf x}_j$, its $k$-th element is represented as $x_k$ and $x_{{jk}}$. 
For $\bf x$ and $x_k$, their initial values at $t=0$, are represented as ${\bf x}^0$ and $x_k^0$. 
A function $f$ with $r\,(\geq 1)$ continuous derivatives is said to be of class $\mathscr{C}^r$.

\section{Problem Statement}
\label{sec:PS}

Consider an LTI system on the $n$-dimensional Euclidean space $\mathbb{R}^n$, described by
\begin{equation}
\frac{{\rm d}{\bf x}}{{\rm d}t} = \dot{\bf x} = {\bf A}{\bf x},
\label{LTI}
\end{equation}
where ${\bf x}\in\mathbb{R}^n$ is a state vector, $t\in\mathbb{R}$ is continuous time, and ${\bf A}\in\mathbb{R}^{n\times n}$ is a system matrix. 
For simplicity of our analysis, we suppose that $\bf A$ has $n$ distinct eigenvalues, denoted by $\lambda_j$ ($j=1,\ldots,n$). 
The corresponding right and left eigenvectors of $\bf A$ are respectively denoted by ${\bf v}_j$ and ${\bf u}_j\in\mathbb{C}^n$, satisfying the orthonormal condition
\begin{equation}
{\bf u}^\top_j{\bf v}_k = \left\{\begin{array}{cl}
1, & j= k,\\
0, & j\neq k. 
\end{array}
\right.
\label{Kronecker}
\end{equation}
We now represent the solution of \eqref{LTI} starting from ${\bf x}^0$ at $t=0$ by $\bar{\bf x}(t; 0, {\bf x}^0)$ or simply $\bar{\bf x}(t; {\bf x}^0)$. 
Then, for the $k$-th element of $\bf x$, its solution is analytically represented in the so-called mode decomposition:
\begin{align}
\bar{x}_k(t; {\bf x}^0)
& = \sum_{j=1}^{n} {\rm e}^{\lambda_j t}({\bf u}_j^\top {\bf x}^0) v_{{jk}}  \label{xk_KMD_LTI} \\
& = \sum_{j=1}^{n} {\rm e}^{\lambda_j t} (u_{{jk}} v_{{jk}})x^0_k 
+ \sum_{j=1}^{n} \sum_{\substack{\ell = 1\\\ell \neq k}}^n {\rm e}^{\lambda_j t} (u_{{j\ell}}v_{{jk}})x^0_\ell,
\label{xk_MinS_LTI}
\end{align}
where $v_{jk}$ and $u_{jk}$ are the $k$-elements of the right eigenvector ${\bf v}_j$ and left eigenvector ${\bf u}_j$, respectively. 
The decomposition \eqref{xk_MinS_LTI} leads to the definitions of so-called \emph{mode-in-state} PF and GP.
\begin{definition}
\label{def_MinSPF_Linear} 
For the LTI system \eqref{LTI}, the mode-in-state participation factor \cite{PF1_1982, PF2_1982} is 
\begin{equation}
{P_{j}^{k}} := u_{{jk}}v_{{jk}}.
\label{p_MinSPF_LTI}
\end{equation}
\end{definition}
\begin{remark}
The main concern behind mode-in-state is with time evolution of the state variable $\bf x$ that is of physical importance. The mode-in-state PF \eqref{p_MinSPF_LTI} appears as the coefficient in the dynamics of the state variable $x_k$ \eqref{xk_MinS_LTI} and quantifies the relative contribution of the $j$-th mode in the state’s evolution of the $k$-th element $x_k$. 
Provided that ${\bf x}^0$ is the $k$-th unit vector ${\bf e}_k$ in $\mathbb{R}^n$, the second term on the right-hand side in \eqref{xk_MinS_LTI} vanishes, yielding the classical notion of mode-in-state PF in \cite{PF1_1982, PF2_1982}.
Assuming $t=0$ in \eqref{xk_MinS_LTI}, $\sum_{j=1}^n {P_{j}^{k}} = 1$ holds, indicating that ${P_{j}^{k}}$ quantifies the excitation of j-th mode in the $k$-th state.
\label{remark:misPF_LTI}
\end{remark}
\begin{definition}
\label{def_MinSGP_Linear} 
For the LTI system \eqref{LTI}, the mode-in-state generalized participation \cite{Pagola1989} is 
\begin{equation}
P_{j}^{k(\ell)} := u_{{j\ell}} v_{{jk}}.
\label{p_MinSGP_LTI}
\end{equation}
\end{definition}
\begin{remark}
The mode-in-state GP, $P_{j}^{k(\ell)}$, quantifies the impact of the $j$-th mode on $\bar{x}_k(t; {\bf x}^0)$ caused by the initial change in $x_\ell$.
Note that for $\ell = k$, $P_{j}^{k(k)}$ is the mode-in-state PF ${P_{j}^{k}}$.
Hence, the mode-in-state GP, $P_{j}^{k(\ell)}$, extends the notion of mode-in-state PF, $P_{j}^k$.
\label{remark:misGP_LTI}
\end{remark}

In addition to the state variable, the modal variable $z_j:={\bf u}_j^\top{\bf x}$ ($j=1,\ldots,n$) and associated time evolution $\bar{z}_j(t; {\bf x}^0)={\bf u}^\top_j\bar{\bf x}(t; {\bf x}^0)$ are represented as follows:
\begin{align}
\bar{z}_j(t; {\bf x}^0) 
&= {\bf u}_j^\top \sum_{i=1}^n {\rm e}^{\lambda_i t}({\bf u}_i^\top {\bf x}^0 ){\bf v}_i  \notag 
\\
&= {\rm e}^{\lambda_j t} {\bf u}_j^\top  {\bf v}_j (\underbrace{ {\bf u}_j^\top {\bf x}^0}_{z_j^0})  
+ \sum_{\substack{i = 1\\ i \neq j}}^n {\rm e}^{\lambda_i t} {\bf u}_j^\top {\bf v}_i (\underbrace{ {\bf u}_i^\top {\bf x}^0}_{z_i^0})  \label{zj_pre}  \\
&= {\rm e}^{\lambda_j t} \left(\sum_{k=1}^n u_{{jk}} v_{{jk}} \right) z_j^0{,}  \label{zj}
\end{align}
where $z_j^0$ is the $j$-th modal variable at $t=0$.
The second term on the right-hand side of \eqref{zj_pre} vanishes because of the condition \eqref{Kronecker}.
The decomposition \eqref{zj_pre} and \eqref{zj} leads to the definitions of so-called \emph{state-in-mode} PF and GP.
\begin{definition}
\label{def_SinMPF_Linear} 
For the LTI system \eqref{LTI}, the state-in-mode PF \cite{PF1_1982, PF2_1982} is
\begin{equation}
{P_{j}^{k}} := u_{{jk}} v_{{jk}}.
\label{p_SinM_LTI}
\end{equation}
\end{definition}
\begin{remark}
The main concern behind state-in-mode is with time evolution of the modal variable $z_j$. The state-in-mode PF \eqref{p_SinM_LTI} appears as the coefficient in the dynamics of the modal variable $z_j$ \eqref{zj} and quantifies the relative contribution of the $k$-th state in the evolution of $j$-th modal variable $z_j$, i.e., $\bar{z}_j(t; {\bf x}^0)$. 
Assuming ${\bf x}^0={\bf v}_j$ in \eqref{xk_MinS_LTI}, a single $j$-th mode is excited, which is again the classical introduction to the state-in-mode PF given in \cite{PF1_1982, PF2_1982}. 
Assuming $t=0$ in \eqref{zj}, $\sum_{k=1}^n {P_{j}^{k}} = 1$ holds, indicating that ${P_{j}^{k}}$ quantifies the contribution of the $k$-th state in $j$-th mode. 
\label{remark:simPF_LTI}
\end{remark}
\begin{remark} 
The original expressions obtained for the mode-in-state \eqref{p_MinSPF_LTI} and state-in-mode \eqref{p_SinM_LTI} PF are identical \cite{PF1_1982, PF2_1982}, although the mode-in-state PF and state-in-mode PF are derived through different ways.
Here, we should mention that the authors of \cite{Hashlamoun2009} proposed a dichotomy between participation factors---their work unveiled an inconsistency resulting from the definition \eqref{p_SinM_LTI} of the \emph{state-in-mode} PF; see \cite{Hashlamoun2009} for more details.
\end{remark}
\begin{definition} 
\label{p_SinMGP_LTI}
In parallel to Definition~\ref{def_MinSGP_Linear}, for the LTI system \eqref{LTI} the state-in-mode GP is
\begin{equation}
{P_{i(j)}^{k}} := u_{{jk}} v_{{ik}}.
\label{p_SinMGP_LTI}
\end{equation}
\label{def_SinMGP_Linear} 
\end{definition}
\begin{remark}
The state-in-mode GP, $P_{i(j)}^{k}$, weights the impact $x_{k}$ has, through $z_{i}$, on the time evolution of $z_{j}$. 
Note that for $i=j$, ${P_{i(j)}^{k}}$ is the state-in-mode PF ${P_{j}^{k}}$.
Hence, the state-in-mode GP $P_{i(j)}^{k}$ extends the notion of state-in-mode PF ${P_{j}^{k}}$. 
\end{remark}
\begin{remark}
For the LTI system \eqref{LTI}, the concept of state-in-mode GP is merely formal and not of practical significance.
That is because, for the LTI system \eqref{LTI}, the state-in-mode GP $P_{i(j)}^{k}$ does not appear in the time evolution of $z_j(t)$ in \eqref{zj}, since the second term in \eqref{zj_pre} vanishes thanks to \eqref{Kronecker}.
However, in the later section, the state-in-mode GP will play a role in the case of nonlinear systems.
\label{remark:simGP_LTI}
\end{remark}
\begin{remark}
\label{S_Indipendent}
All the definitions of PFs and GPs for the LTI system \eqref{LTI} are state-independent.
\end{remark}

This paper addresses the long-standing problem of developing the concept of PFs and GPs for nonlinear dynamical systems. 
Consider the nonlinear system described by the ordinary differential equation 
\begin{equation}
\dot{\bf x} = {\bf F} ({\bf x}),
\label{Nonlinear_ODE}
\end{equation}
where ${\bf x} \in {\mathbb R}^n$ is a state vector and ${\bf F}: {\mathbb R}^n \rightarrow {\mathbb R}^n$ is a vector-valued nonlinear function.
We make the standing assumption that the solution
\[
\bar{\bf x}(t; 0, {\bf x}^0)={\bf S}^t({\bf x}^0), \qquad \forall t\geq 0
\]
associated with the initial condition ${\bf x}^0$ exists and is unique. 
For example, the existence of the solution for all positive time can be obtained when ${\bf F}$ is globally Lipschitz continuous on a forward invariant set. 
The one-parameter family of maps ${\bf S}^t: \mathbb{R}^n\to\mathbb{R}^n, t\geq 0$ is the \emph{semi-flow} generated by \eqref{Nonlinear_ODE}. 
A positive semi-orbit, $\{{\bf S}^t({\bf x}^0)\}_{t\geq0}$, is generally complex and, for example, may exhibit a chaotic attractor.
The problem we address in this paper is the following:
\begin{problem} 
For the nonlinear system \eqref{Nonlinear_ODE}, find formulations for PFs and GPs that are consistent with those of the LTI system \eqref{LTI}.
\end{problem}

\section{Linear Systems}
\label{sec:LS}

This section restates the classical definitions of PFs and GPs for the LTI system \eqref{LTI} from the viewpoint of variational dynamics.
This restatement leads to a natural extension of those concepts to nonlinear systems, which will be exploited in Section~\ref{sec:NLS}. 
For a given initial change ${\delta{\bf x}}\in\mathbb{R}^n$, the solution of $k$-th element of $\bf x$, starting at ${\bf x}^0+{\delta{\bf x}}$, is given by
\begin{align}
\bar{x}_k(t; {\bf x}^0 + {\delta{\bf x}}) 
&= \sum_{j=1}^{n} {\rm e}^{\lambda_j t} 
{\bf u}_j^\top ({\bf x}^0 + {\delta{\bf x}}) v_{{jk}} \notag \\
&= \bar{x}_k(t; {\bf x}^0) 
+ \underbrace{\sum_{j=1}^{n} {\rm e}^{\lambda_j t} (u_{{jk}}v_{{jk}}) \delta x_{k} 
+ \sum_{j=1}^{n} \sum_{\substack{\ell = 1\\\ell \neq k}}^{n} {\rm e}^{\lambda_j t}
(u_{{j\ell}}v_{{jk}})\delta x_{\ell}}_{{{\rm d}x}_k(t; {\delta{\bf x}})},
\label{xk_vari}
\end{align}
where ${{\rm d}x}_k(t; {\delta{\bf x}})$ is the variation between the solutions ${\bar x}_k(t; {\bf x}^0 + {\delta{\bf x}})$ and $\bar{x}_k(t; {\bf x}^0)$. 
In addition to the state, the time evolution of the $j$-th mode $z_j$ starting at ${\bf x}^0+{\delta{\bf x}}$ is represented as follows:
\begin{align}
\bar{z}_j(t; {\bf x}^0 + {\delta{\bf x}})
&= {\bf u}_j^\top \bar{\bf x}(t; {\bf x}^0 + {\delta{\bf x}}) \notag \\
&= \bar{z}_j(t; {\bf x}^0) 
+ \underbrace{{\rm e}^{\lambda_j t}
\left(\sum_{k=1}^n u_{{jk}}v_{{jk}}\right){\bf u}_j^\top{\delta{\bf x}}}_{{{\rm d}z}_j(t; {\delta{\bf x}})},
\label{zk_vari}
\end{align}
where ${{\rm d}z}_j(t; {\delta{\bf x}})$ is the variation between $\bar{z}_j(t; {\bf x}^0 + {\delta{\bf x}})$ and $\bar{z}_j(t; {\bf x}^0)$. 
Here, the initial variation
\begin{equation}
{\rm d}z_j(0,\delta{\bf x})={\bf u}_j^\top\delta{\bf x}=:\delta z_j^0,
\label{eqn:delta-z_j_LTI}
\end{equation}
corresponds to the initial change to the $j$-th mode derived by projecting $\delta{\bf x}$ onto the space spanned by ${\bf u}_j$. 
Equations~\eqref{xk_vari} and \eqref{zk_vari} lead to characterizing the variational dynamics in terms of the PFs and GPs in the following proposition (whose proof follows directly from the above derivation).
\begin{proposition}
\label{prop:Variation_LTI}
Consider the LTI system \eqref{LTI} with the PFs ${P_{j}^{k}}$ and mode-in-state GP $P_{j}^{k(\ell)}$ from Definitions~\ref{def_MinSPF_Linear} to \ref{def_SinMPF_Linear}. 
For a given initial change $\delta{\bf x}\in\mathbb{R}^n$, the variational dynamics in \eqref{xk_vari} and \eqref{zk_vari} of the state ($x_k$) and the modal ($z_j$) variables are represented as follows:
\begin{align}
{{\rm d}x}_k(t; {\delta{\bf x}}) &= 
\sum_{j=1}^{n} {\rm e}^{\lambda_j t} {P_{j}^{k}} \delta x_{k} + 
\sum_{j=1}^{n} \sum_{\substack{\ell = 1\\\ell \neq k}}^{n} {\rm e}^{\lambda_j t} \label{dx_LTI}
P_{j}^{k(\ell)}\delta x_{\ell}, \\
{{\rm d}z}_j(t; {\delta{\bf x}}) &=
{\rm e}^{\lambda_j t}
\left(\sum_{k=1}^n {P_{j}^{k}}\right){\delta z_j^0}, 
\label{dz_LTI}
\end{align}
where $\delta z_j^0$ is the initial change to the $j$-th mode given in \eqref{eqn:delta-z_j_LTI}. 
\end{proposition}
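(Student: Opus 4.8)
The plan is to obtain both identities purely by substituting the participation-factor definitions into the variational expressions \eqref{xk_vari} and \eqref{zk_vari} that were already derived above, so that nothing beyond careful bookkeeping of indices remains. The analytic work—invoking linearity of the flow, the mode decomposition, and the orthonormality relation \eqref{Kronecker}—has in effect been done in producing those two expressions, and the proposition only recasts them in PF/GP notation.

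First, for the state variable I would take the bracketed term defining ${\rm d}x_k(t;{\delta{\bf x}})$ in \eqref{xk_vari} exactly as written, namely as a diagonal contribution (the $\ell=k$ part carrying the coefficient $u_{{jk}}v_{{jk}}$ and multiplying $\delta x_k$) plus an off-diagonal contribution (the $\ell\neq k$ part carrying $u_{{j\ell}}v_{{jk}}$ and multiplying $\delta x_\ell$). Recognizing $u_{{jk}}v_{{jk}}$ as the mode-in-state PF ${P_{j}^{k}}$ of Definition~\ref{def_MinSPF_Linear} and $u_{{j\ell}}v_{{jk}}$ as the mode-in-state GP $P_{j}^{k(\ell)}$ of Definition~\ref{def_MinSGP_Linear}, the substitution converts \eqref{xk_vari} term by term into the claimed \eqref{dx_LTI}. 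The only consistency check is that the diagonal/off-diagonal split is complementary: since $P_{j}^{k(\ell)}$ reduces to ${P_{j}^{k}}$ when $\ell=k$, excluding $\ell=k$ from the GP sum and reinstating it as the PF term reproduces precisely the full coefficient pattern of \eqref{xk_vari}.

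Second, for the modal variable I would start from the bracketed term defining ${\rm d}z_j(t;{\delta{\bf x}})$ in \eqref{zk_vari}. Its scalar coefficient is $\sum_{k=1}^n u_{{jk}}v_{{jk}}$, which I identify with $\sum_{k=1}^n {P_{j}^{k}}$ via the state-in-mode PF of Definition~\ref{def_SinMPF_Linear} (equivalently the mode-in-state PF, the two coinciding). The remaining factor ${\bf u}_j^\top{\delta{\bf x}}$ is exactly the initial modal change $\delta z_j^0$ introduced in \eqref{eqn:delta-z_j_LTI}. Making these two identifications turns \eqref{zk_vari} directly into \eqref{dz_LTI}.

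I expect no genuine obstacle here, since the statement is a relabeling of already-established expressions rather than a new computation. The one point deserving attention is notational rather than mathematical: ensuring that the index $\ell$ in the GP coefficient $P_{j}^{k(\ell)}=u_{{j\ell}}v_{{jk}}$ is paired with $\delta x_\ell$ and not inadvertently transposed, and that the diagonal term is cleanly removed from the GP double sum and restored as the separate PF term. If anything, the substance of the result is conceptual—that the classical PF and GP appear as the coefficients governing, respectively, the self-induced and cross-induced parts of the variational response—rather than in the elementary derivation itself.
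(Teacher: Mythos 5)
Your proposal is correct and matches the paper's own argument: the paper explicitly notes that the proposition "follows directly from the above derivation," i.e., from \eqref{xk_vari} and \eqref{zk_vari}, and your term-by-term identification of $u_{{jk}}v_{{jk}}$ with ${P_{j}^{k}}$, $u_{{j\ell}}v_{{jk}}$ with $P_{j}^{k(\ell)}$, and ${\bf u}_j^\top\delta{\bf x}$ with $\delta z_j^0$ is precisely that relabeling. Your attention to the diagonal/off-diagonal split ($\ell=k$ versus $\ell\neq k$) is the right consistency check and is implicit in the paper's derivation as well.
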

\begin{remark}
\label{remark:Physical_LTI}
The mode-in-state PF ${P_{j}^{k}}$ and GP $P_{j}^{k(\ell)}$ are interpreted as metrics that evaluate the relative contribution of the $j$-th mode in the variational dynamics of the $k$-th state variable $x_k$. 
The state-in-mode PF ${P_{j}^{k}}$ is interpreted as the metric that evaluates the relative contribution of the $k$-th state element $x_k$ in the variational dynamics of the $j$-th mode $z_j$. 
Here, the state-in-mode GP ${P_{i(j)}^{k}}$ can be defined as in \eqref{p_SinMGP_LTI} but does not appear in the variational dynamics of $z_j$ for LTI systems.
We will exploit the connection of the PFs and GPs with the variational dynamics for our analysis of nonlinear systems.
\end{remark}
\begin{remark}
\label{remark:uniform_LTI}
The variational dynamics \eqref{dx_LTI} and \eqref{dz_LTI} are independent of the initial state ${\bf x}^0$ and thus \emph{uniform} over the entire state space. 
This is intrinsic to the linear vector field, that is, a linear mapping from the state space to its tangent bundle that assigns to each state a vector in the corresponding tangent space. 
\end{remark}
\begin{remark}
\label{remark1}
As in Remarks~\ref{remark:misPF_LTI}, \ref{remark:misGP_LTI} and \ref{remark:simPF_LTI}, the classical definitions of PFs and GPs are based in the Euclidean space $\mathbb{R}^n$ (namely, using the vectors ${\bf e}_k$ and ${\bf v}^j$). 
In this sense, the classical definitions are not supported to work in linear vector fields on a non-Euclidean space (generally a differentiable manifold): for example, the rotation on the circle,
\[
\dot{\theta} = \omega, \quad \theta \in \mathbb{R}/\mathbb{Z},
\label{Torus} 
\]
where $\omega\in\mathbb{R}$ is a parameter (angular frequency of the rotation). 
However, the variational dynamics \eqref{dx_LTI} and \eqref{dz_LTI} can be introduced for the non-Euclidean case. Therefore, we show here that the traditional PFs and GPs have dynamical meaning and can work for general linear vector fields on non-Euclidean spaces. 
\end{remark}

\section{Nonlinear Systems}
\label{sec:NLS}

This section proposes novel definitions of PFs and GPs for a class of nonlinear systems by exploiting the variational formulation in Section~\ref{sec:LS}. 
The main tool comes from the Koopman operator framework summarized in the next subsection. 
In this paper, following \cite{Mauroy2013, Mezic2020, Susuki2021, Kvalheim2021}, we consider nonlinear systems with two types of non-stationary dynamics: convergent dynamics to a stable EP or LC. 
The stable EP for the nonlinear system \eqref{Nonlinear_ODE} is denoted by ${\bf x}^\ast$ with a basin of attraction $\mathbb{B}({\bf x}^\ast)\subseteq \mathbb{R}^n$. 
Following \cite{Mauroy2013}, we assume for the nonlinear system \eqref{Nonlinear_ODE} that ${\bf F}$ is analytic and that the Jacobian matrix computed at ${\bf x}^\ast$ has $n$ distinct (non-resonant\footnote{The eigenvalues $\lambda_1,\ldots,\lambda_n$ are said to be non-resonant if there exists no $(j_1,\ldots,j_n)\in\mathbb{Z}^n\setminus\{{{\bf 0}}\}$ such that $j_1\lambda_1+\cdots+j_n\lambda_n=0$.}) eigenvalues $\lambda_j$ characterized by strictly negative real parts. 
The setting of the system with a stable LC will be introduced before Theorem~\ref{Case_LC}.

\subsection{Summarized theory of Koopman operators}
\label{sec:KOtheory}

We now summarize the existing results on the Koopman operator framework that is used for our development in later sections of this paper. 
For this, we introduce a scalar-valued function, called an observable $f: \mathbb{R}^n\to\mathbb{C}$. 
Then, the definition of the Koopman operator and its spectral properties are introduced below.
\begin{definition}
Consider a (Banach) space $\mathscr{F}$ of observables $f: \mathbb{R}^n\to\mathbb{C}$. 
The family of Koopman operators $\mathcal{U}^t: \mathscr{F}\to\mathscr{F}$ associated with the family of maps ${\bf S}^t: \mathbb{R}^n\to\mathbb{R}^n, t\geq 0$, is defined through the composition 
\[
\mathcal{U}^tf=f\circ{\bf S}^t \qquad \forall f\in\mathscr{F}.
\] 
\end{definition}
\begin{definition}
A Koopman eigenvalue of $\mathcal{U}^t$ is a complex number $\lambda$ such that there exists a nonzero $\phi\in\mathscr{F}$, called a Koopman eigenfunction, such that
\[
\mathcal{U}^t\phi={\rm e}^{\lambda t}\phi \qquad \forall t\geq 0.
\]
\end{definition}
Several studies have characterized the spectral properties of the Koopman operator connected to the dynamics of the underlying nonlinear system \eqref{Nonlinear_ODE}. 
In the following, the invariance of $\mathscr F$ with respect to ${\mathcal U}^t$ is guaranteed. 
\begin{lemma}
\label{lemma:KEF_EP}
Consider the nonlinear system \eqref{Nonlinear_ODE} possessing a stable EP ${\bf x}^\ast$ with a basin of attraction $\mathbb{B}({\bf x}^\ast)$. 
Under a suitable choice of the space $\mathscr{F}$ of observables $f: \mathbb{B}({\bf x}^\ast)\to\mathbb{C}$, there exists a set of $n$ Koopman principal eigenvalues corresponding to the linearized eigenvalues {$\lambda_j$ ($j=1,\ldots,n$)} and associated Koopman eigenfunctions $\phi_j\in\mathscr{F}\setminus\{0\}$ that are smooth over $\mathbb{B}({\bf x}^\ast)$:
\[
\mathcal{U}^t\phi_j={\rm e}^{\lambda_jt}\phi_j \qquad \forall t\geq 0. \label{KE}
\] 
Furthermore, there exist Koopman eigenvalues ${j_1}\lambda_1+\cdots +{j_n}\lambda_n$ for $j_1,\ldots,j_n\in\mathbb{N}_0$ satisfying $j_1+\cdots+j_n>1$ and associated Koopman eigenfunctions defined as
\[
\phi_{\langle j_1\cdots j_n\rangle}({\bf x}):=\phi_1({\bf x})^{j_1}\cdots\phi_n({\bf x})^{j_n},
\] 
which are smooth over $\mathbb{B}({\bf x}^\ast)$:
\[
\mathcal{U}^t
\phi_{\langle j_1\cdots j_n\rangle}
={\rm e}^{({j_1}\lambda_1+\cdots +{j_n}\lambda_n)t} \phi_{\langle j_1\cdots j_n\rangle} \qquad \forall t\geq 0.
\]
\end{lemma}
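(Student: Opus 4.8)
The plan is to construct the $n$ principal eigenfunctions from a linearizing conjugacy of the flow and then to obtain every remaining eigenfunction by pure multiplicativity. First I would invoke the analytic Poincar\'{e} linearization theorem: since ${\bf F}$ is analytic, the Jacobian ${\bf A}$ at ${\bf x}^\ast$ is hyperbolic (all eigenvalues have strictly negative real parts), and the spectrum $\lambda_1,\ldots,\lambda_n$ is non-resonant, there is an analytic local diffeomorphism ${\bf h}$ on a neighborhood $N$ of ${\bf x}^\ast$ with ${\bf h}({\bf x}^\ast)={\bf 0}$ and $D{\bf h}({\bf x}^\ast)=I$ (in eigencoordinates) that conjugates the semi-flow to its linearization, ${\bf h}\circ{\bf S}^t=\mathrm{e}^{{\bf A}t}\circ{\bf h}$ on $N$. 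The non-resonance hypothesis is precisely what removes the obstruction to an analytic, rather than merely topological, linearization, while hyperbolicity supplies the convergence used next.

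Second, I would extend ${\bf h}$ from $N$ to the whole basin $\mathbb{B}({\bf x}^\ast)$ along trajectories. For any ${\bf x}\in\mathbb{B}({\bf x}^\ast)$, asymptotic stability guarantees a time $\tau\geq 0$ with ${\bf S}^\tau({\bf x})\in N$, and I set ${\bf h}({\bf x}):=\mathrm{e}^{-{\bf A}\tau}{\bf h}({\bf S}^\tau({\bf x}))$. The conjugacy relation on $N$ makes this independent of the chosen $\tau$, and analyticity is inherited because ${\bf S}^t$ is smooth in ${\bf x}$ and ${\bf h}$ is smooth on $N$. Defining $\phi_j({\bf x}):={\bf u}_j^\top{\bf h}({\bf x})$, where ${\bf u}_j$ is the left eigenvector of ${\bf A}$, and using ${\bf u}_j^\top\mathrm{e}^{{\bf A}t}=\mathrm{e}^{\lambda_j t}{\bf u}_j^\top$, I would verify $\phi_j({\bf S}^t({\bf x}))=\mathrm{e}^{\lambda_j t}\phi_j({\bf x})$, i.e. $\mathcal{U}^t\phi_j=\mathrm{e}^{\lambda_j t}\phi_j$ on all of $\mathbb{B}({\bf x}^\ast)$. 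Taking $\mathscr{F}$ to be a $\mathcal{U}^t$-invariant space of smooth observables on $\mathbb{B}({\bf x}^\ast)$ then places each $\phi_j$ and all their products in $\mathscr{F}$.

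Third, the higher-order eigenfunctions follow algebraically from the multiplicativity of the Koopman operator. Because composition distributes over products, substituting the eigenfunction relation factor by factor gives
\[
\mathcal{U}^t\phi_{\langle j_1\cdots j_n\rangle}
=\prod_{i=1}^n\bigl(\mathrm{e}^{\lambda_i t}\phi_i\bigr)^{j_i}
=\mathrm{e}^{(j_1\lambda_1+\cdots+j_n\lambda_n)t}\,\phi_{\langle j_1\cdots j_n\rangle}.
\]
Smoothness over $\mathbb{B}({\bf x}^\ast)$ is immediate since finite products of smooth functions are smooth, and $\phi_{\langle j_1\cdots j_n\rangle}\not\equiv 0$ because each $\phi_i$ is nontrivial; the condition $j_1+\cdots+j_n>1$ simply separates these from the principal eigenfunctions.

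The main obstacle is the second step: propagating the eigenfunctions, defined a priori only near ${\bf x}^\ast$, to smooth objects on the entire domain of attraction. The local linearization is classical and the product formula is a one-line computation, so the genuine content lies in showing that every trajectory reaches $N$ in finite time, that the pullback definition is independent of $\tau$, and that no regularity is lost under composition with ${\bf S}^\tau$. A secondary subtlety is the choice of $\mathscr{F}$, which must be simultaneously $\mathcal{U}^t$-invariant and large enough to contain all principal eigenfunctions together with their products.
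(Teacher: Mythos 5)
Your proposal is correct and takes essentially the same route as the paper: the paper's proof of Lemma~\ref{lemma:KEF_EP} is simply a citation to Section~4.2.2 of \cite{Susuki2021}, and the construction there is precisely yours---analytic Poincar\'{e} linearization near ${\bf x}^\ast$ (justified by the non-resonant, strictly stable spectrum), extension of the conjugacy to all of $\mathbb{B}({\bf x}^\ast)$ by pulling back along the flow, principal eigenfunctions as linear functionals of the conjugacy, and higher-order eigenfunctions via multiplicativity of $\mathcal{U}^t$. The space $\mathscr{F}$ you leave abstract is realized there as the modulated Segal-Bargmann space built from that same conjugacy, which indeed contains the $\phi_j$ and all their products.
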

\begin{proof}  
See Section~4.2.2 on page 2023 of \cite{Susuki2021}. 
\end{proof}
\begin{lemma}
\label{lemma:MV_EP}
The Koopman principal eigenfunction $\phi_j$ in Lemma~\ref{lemma:KEF_EP} provides a nonlinear generalization of the modal variable in the LTI system \eqref{LTI}. 
That is, by defining
\[
z_j:=\phi_j({\bf x}), \quad j=1,\ldots,n,
\]
its time evolution $\bar{z}_j(t; {\bf x}^0)=\phi_j({\bf S}^t({\bf x}^0))$ is represented as follows:
\[
\bar{z}_j(t; {\bf x}^0) = {\rm e}^{\lambda_j t} \bar{z}_j(0; {\bf x}^0) 
 = {\rm e}^{\lambda_jt}\phi_j({\bf x}^0).
\]
The generalization holds for the higher-order Koopman eigenfunctions, that is, 
\[
z_{\langle j_1\cdots j_n\rangle}({\bf x}) := \phi_{\langle j_1\cdots j_n\rangle}({\bf x}),
\]
and
\[
\begin{aligned}
\bar{z}_{\langle j_1\cdots j_n\rangle}(t; {\bf x}^0) 
&= {\rm e}^{({j_1}\lambda_1+\cdots +{j_n}\lambda_n)t} \bar{z}_{\langle j_1\cdots j_n\rangle}(0; {\bf x}^0) \notag\\
&= {\rm e}^{({j_1}\lambda_1+\cdots +{j_n}\lambda_n)t} \phi_{\langle j_1\cdots j_n\rangle}({\bf x}^0). 
\end{aligned}
\]
\end{lemma}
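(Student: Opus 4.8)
The plan is to prove Lemma~\ref{lemma:MV_EP} as a direct consequence of the spectral structure established in Lemma~\ref{lemma:KEF_EP}, essentially by unwinding the definitions of the Koopman operator and the time evolution of an observable. The statement asserts that the Koopman principal eigenfunctions (and their products) serve as nonlinear modal variables whose time evolution is a pure exponential, mirroring the LTI identity $\bar{z}_j(t;{\bf x}^0) = {\rm e}^{\lambda_j t} z_j^0$ from \eqref{zj}. Since all the hard analytic work—existence and smoothness of the $\phi_j$ over $\mathbb{B}({\bf x}^\ast)$—is already delivered by Lemma~\ref{lemma:KEF_EP}, what remains is a short computation connecting the abstract eigenfunction relation to the concrete time-evolution formula.

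First I would fix an initial condition ${\bf x}^0\in\mathbb{B}({\bf x}^\ast)$ and set $z_j := \phi_j({\bf x})$, so that by definition the time evolution of this modal variable along the flow is $\bar{z}_j(t;{\bf x}^0) = \phi_j({\bf S}^t({\bf x}^0))$. Next I would invoke the definition of the Koopman operator, $\mathcal{U}^t\phi_j = \phi_j\circ{\bf S}^t$, which gives $(\mathcal{U}^t\phi_j)({\bf x}^0) = \phi_j({\bf S}^t({\bf x}^0)) = \bar{z}_j(t;{\bf x}^0)$. Then I would apply the eigenfunction identity from Lemma~\ref{lemma:KEF_EP}, namely $\mathcal{U}^t\phi_j = {\rm e}^{\lambda_j t}\phi_j$, and evaluate both sides at ${\bf x}^0$ to obtain $\bar{z}_j(t;{\bf x}^0) = {\rm e}^{\lambda_j t}\phi_j({\bf x}^0) = {\rm e}^{\lambda_j t}\bar{z}_j(0;{\bf x}^0)$, which is precisely the claimed formula. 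The higher-order case is entirely parallel: setting $z_{\langle j_1\cdots j_n\rangle}({\bf x}) := \phi_{\langle j_1\cdots j_n\rangle}({\bf x})$ and repeating the same three steps with the higher-order eigenvalue ${j_1}\lambda_1+\cdots+{j_n}\lambda_n$ and its eigenfunction (both supplied by Lemma~\ref{lemma:KEF_EP}) yields the second display.

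There is essentially no substantive obstacle here, since the lemma is a reformulation rather than a new result; the only point requiring a small amount of care is to make explicit that evaluating the operator identity $\mathcal{U}^t\phi_j = {\rm e}^{\lambda_j t}\phi_j$ pointwise at ${\bf x}^0$ is legitimate—that is, the identity holds as an equality of functions in $\mathscr{F}$, and the chosen space of observables consists of honest pointwise-defined functions on $\mathbb{B}({\bf x}^\ast)$ rather than equivalence classes. The smoothness of $\phi_j$ over $\mathbb{B}({\bf x}^\ast)$ asserted in Lemma~\ref{lemma:KEF_EP} guarantees this, so the evaluation step is unambiguous. I would close by remarking that this establishes the intended correspondence in Table~\ref{Table_Intro}, identifying the modal variable $z_j = \phi_j({\bf x})$ as the nonlinear counterpart of ${\bf u}_j^\top{\bf x}$, with the eigenfunction evolution playing the role of the modal decay ${\rm e}^{\lambda_j t}$ in the LTI setting.
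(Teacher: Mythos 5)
Your proof is correct and coincides with what the paper intends: the paper simply states ``this proof is straightforward and thus omitted,'' and your argument is exactly that straightforward unwinding of the definitions---composing $\phi_j$ with the flow, invoking the eigenfunction identity $\mathcal{U}^t\phi_j = {\rm e}^{\lambda_j t}\phi_j$ from Lemma~\ref{lemma:KEF_EP}, and evaluating pointwise at ${\bf x}^0$. Your extra remark on the legitimacy of pointwise evaluation (the observables being genuine functions, not equivalence classes) is a sensible touch of rigor but does not constitute a different route.
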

\begin{proof}
This proof is straightforward and thus omitted.
\end{proof}

The spectral property makes it possible to decompose the state dynamics of a nonlinear system, as originally invented in \cite{Mezic2005, Rowley2009}, and later called the Koopman Mode Decomposition (KMD).
\begin{lemma}
\label{lemma:KMD_EP}
Let the nonlinear system \eqref{Nonlinear_ODE} {possess} a stable EP ${\bf x}^\ast$ with a basin of attraction $\mathbb{B}({\bf x}^\ast)$. 
Assume that $\mathscr{F}$ is the modulated Segal-Bargmann space defined on $\mathbb{B}({\bf x}^\ast)$. 
Then, for all ${\bf x}^0\in\mathbb{B}({\bf x}^\ast)$ and $t\geq 0$, the flow of the system is represented as follows:
\begin{align} 
\bar{\bf x}(t; {\bf x}^0) 
&= {\bf x}^\ast+
\sum_{j=1}^{n} {\rm e}^{\lambda_jt}\phi_j ({\bf x}^0) {\bf V}_{j}
+ \sum^\infty_{\substack{{j_1},\ldots,{j_n}\in\mathbb{N}_0 \\ {j_1}+\cdots+{j_n}>1}}
{\rm e}^{({j_1}\lambda_1+\cdots +{j_n}\lambda_n)t}
\phi_{\langle j_1\cdots j_n\rangle}({\bf x}^0)
{\bf V}_{\langle j_1\cdots j_n\rangle},
\label{KMD_X} 
\end{align}
where $\lambda_j$ and $\phi_j$ are as in Lemma~\ref{lemma:KEF_EP}, and ${\bf V}_j$ and ${\bf V}_{\langle j_1\cdots j_n\rangle}$ are the vector-valued coefficients for the expansion that are uniquely determined.
\end{lemma}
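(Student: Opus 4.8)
The plan is to reduce the time-dependent decomposition \eqref{KMD_X} to a purely spatial expansion of the full-state observable evaluated at the initial condition, and then to propagate that expansion in time by the Koopman semigroup. Concretely, I would first treat each shifted coordinate function $g_k({\bf x}):=x_k-x^\ast_k$ as an observable in $\mathscr{F}$ and expand it in the system of Koopman eigenfunctions supplied by Lemma~\ref{lemma:KEF_EP}. Writing ${\bf g}:=(g_1,\ldots,g_n)^\top$, the target is the spatial identity
\[
{\bf g}({\bf x}) = \sum_{j=1}^n \phi_j({\bf x}){\bf V}_j + \sum_{\substack{j_1,\ldots,j_n\in\mathbb{N}_0\\ j_1+\cdots+j_n>1}} \phi_{\langle j_1\cdots j_n\rangle}({\bf x}){\bf V}_{\langle j_1\cdots j_n\rangle},
\]
valid on $\mathbb{B}({\bf x}^\ast)$, where the vector-valued coefficients ${\bf V}_j,{\bf V}_{\langle j_1\cdots j_n\rangle}$ are precisely the Koopman modes. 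Given this identity, substituting ${\bf x}={\bf S}^t({\bf x}^0)$ and recalling ${\bf g}({\bf S}^t({\bf x}^0))=\bar{\bf x}(t;{\bf x}^0)-{\bf x}^\ast$ yields \eqref{KMD_X} once the eigenvalue relations are invoked.

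The spatial expansion is the heart of the argument. I would invoke the defining property of the modulated Segal--Bargmann space on $\mathbb{B}({\bf x}^\ast)$: under the standing hypotheses (${\bf F}$ analytic, with $n$ distinct, non-resonant, stable linearized eigenvalues), the principal eigenfunctions $\phi_1,\ldots,\phi_n$ furnish an analytic linearizing change of coordinates near ${\bf x}^\ast$, and the products $\phi_{\langle j_1\cdots j_n\rangle}$ correspond to the monomials in these coordinates. The space $\mathscr{F}$ is constructed so that this family forms a complete basis into which any of its elements expands uniquely; each $g_k$ belongs to $\mathscr{F}$ because it is analytic and vanishes at ${\bf x}^\ast$. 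Uniqueness of the coefficients ${\bf V}_j,{\bf V}_{\langle j_1\cdots j_n\rangle}$ then follows from linear independence of the basis, which rests on the distinctness and non-resonance of the $\lambda_j$.

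For the time-propagation step, I would apply $\mathcal{U}^t$ to both sides of the spatial identity. Since $\mathscr{F}$ is $\mathcal{U}^t$-invariant and $\mathcal{U}^t$ acts boundedly on it (as noted before Lemma~\ref{lemma:KEF_EP}), $\mathcal{U}^t$ may be interchanged with the convergent series, and each term transforms according to $\mathcal{U}^t\phi_j={\rm e}^{\lambda_j t}\phi_j$ and $\mathcal{U}^t\phi_{\langle j_1\cdots j_n\rangle}={\rm e}^{(j_1\lambda_1+\cdots+j_n\lambda_n)t}\phi_{\langle j_1\cdots j_n\rangle}$ from Lemma~\ref{lemma:KEF_EP}. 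Evaluating the resulting identity at ${\bf x}^0$ and using $(\mathcal{U}^t{\bf g})({\bf x}^0)={\bf g}({\bf S}^t({\bf x}^0))=\bar{\bf x}(t;{\bf x}^0)-{\bf x}^\ast$ produces exactly \eqref{KMD_X}.

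The main obstacle I anticipate is analytic rather than algebraic: establishing that each coordinate observable genuinely lies in $\mathscr{F}$, that the eigenfunction family is complete in it, and that $\mathcal{U}^t$ commutes with the infinite sum in the topology of the space. These points are precisely what the choice of the modulated Segal--Bargmann space is designed to secure, so I would lean on its established functional-analytic properties (citing the spectral characterizations of \cite{Mezic2013, Mauroy2013, Mezic2020} and the construction underlying Lemma~\ref{lemma:KEF_EP}) rather than reproving completeness and convergence from scratch; the remaining bookkeeping---collecting coefficients and matching exponents---is routine.
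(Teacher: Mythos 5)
Your proposal is correct and follows essentially the same route as the paper: the paper's proof simply defers to the construction in \cite{Susuki2021} (sketched in Appendix~A), where the eigenfunctions $\phi_{\langle j_1\cdots j_n\rangle}$ form a complete orthonormal basis of the modulated Segal--Bargmann space, the coordinate observables are expanded in that basis with inner-product coefficients (the Koopman modes), and the expansion is propagated term by term via $\mathcal{U}^t\phi_{\langle j_1\cdots j_n\rangle}={\rm e}^{(j_1\lambda_1+\cdots+j_n\lambda_n)t}\phi_{\langle j_1\cdots j_n\rangle}$ with pointwise convergence secured by the reproducing-kernel structure. Your spatial-identity-then-propagate ordering and the shift $g_k=x_k-x_k^\ast$ to absorb the constant term match this argument in substance, and, like the paper, you appropriately lean on the cited literature for completeness and convergence rather than reproving them.
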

\begin{proof}
See Section~4.2.2 in page~2024 of \cite{Susuki2021} for a rigorous proof of convergence. 
The modulated Segal-Bargmann space is a reproducing kernel Hilbert space, which was originally introduced in \cite{Mezic2020}: see Appendix~\ref{app:Segal} in detail. 
\end{proof}
\begin{remark}
\label{remark:KMD_EP}
The vectors ${\bf V}_j$ and ${\bf V}_{\langle j_1\cdots j_n\rangle}$ are called Koopman modes. 
The third term in \eqref{KMD_X} accounts for higher-order terms in the time evolution of $\bf x$.
The $k$-th element of ${\bf V}_j$ (or ${\bf V}_{\langle j_1\cdots j_n\rangle}$) corresponds to the inner product (equipped with $\mathscr{F}$) of the Koopman eigenfunction $\phi_j({\bf x})$ (or $\phi_{\langle j_1\cdots j_n\rangle}({\bf x})$) and {the observable} $f({\bf x})=x_k$. 
For linear systems \eqref{LTI}, higher-order terms do not appear in $\bar{\bf x}(t; {\bf x}^0)$ because the Koopman mode ${\bf V}_{\langle j_1\cdots j_n\rangle}$ is $\bf 0$ in \eqref{KMD_X}. 
\end{remark}

\subsection{Main results}

We can now introduce the {definitions} of PFs and GPs for nonlinear systems based on the Koopman eigenfunctions and KMD.
\begin{definition}
\label{def_PFGP_NL}
For the $j$-th Koopman principle eigenfunction $\phi_j$, the mode-in-state and state-in-mode Participation Factors (PFs) for the nonlinear system \eqref{Nonlinear_ODE} are defined as
\begin{equation} 
{P_{j}^{k}}({\bf x}) :=\frac{\partial \phi_j}{\partial x_k}({\bf x})V_{{jk}}, 
\quad k=1,\ldots,n.
\label{PF_NL}
\end{equation}
Also, the mode-in-state GP $P_{j}^{k(\ell)}({\bf x})$ and state-in-mode GP $P_{i(j)}^{k}({\bf x})$ for the nonlinear system  \eqref{Nonlinear_ODE}
are defined as 
\begin{align} 
\text{mode-in-state:} & \quad  P_{j}^{k(\ell)}({\bf x}) := \frac{\partial \phi_j}{\partial x_\ell}({\bf x})V_{{jk}},  
\label{GP_MinS_NL} \\
\text{state-in-mode:}& \quad P_{i(j)}^{k}({\bf x}) := \frac{\partial \phi_j}{\partial x_k}({\bf x})V_{{ik}},
\label{GP_SinM_NL}
\end{align}
where $\ell, i \in\{1,\ldots,n\}$. 
\end{definition}
\begin{definition}
\label{def_PFGP_NL_High}
For the higher-order Koopman eigenfunction  $\phi_{\langle j_1\cdots j_n\rangle}$, the Participation Factors (PFs) and Generalized Participations (GPs) for the nonlinear system \eqref{Nonlinear_ODE} are defined as
\begin{align} 
P_{\langle j_1\cdots j_n\rangle}^{k}({\bf x}) &:=
\frac{\partial \phi_{\langle j_1\cdots j_n\rangle}} {\partial x_k}({\bf x}){V}_{\langle j_1\cdots j_n\rangle k}, 
\label{PF_NL_High}\\
P_{\langle j_1\cdots j_n\rangle}^{k(\ell)}({\bf x}) &:=
\frac{\partial \phi_{\langle j_1\cdots j_n\rangle}} {\partial x_\ell}({\bf x}){V}_{\langle j_1\cdots j_n\rangle k}, 
\label{GP_MinS_NL_High} \\
P_{\langle j_1\cdots j_n\rangle (j)}^{k}({\bf x}) &:=
\frac{\partial \phi_j}  {\partial x_k}({\bf x}){V}_{\langle j_1\cdots j_n\rangle k},
\label{GP_SinM_NL_High}
\end{align}
where $j, k,\ell\in\{1,\ldots,n\}$.
\end{definition}
\begin{remark}
The definitions \eqref{PF_NL} to \eqref{GP_SinM_NL_High} of PFs and GPs are valid as long as the Koopman principal eigenfunctions $\phi_j$ are of class $\mathscr{C}^1$, and $V_{{jk}}$ (mathematically, the inner product of $\phi_j({\bf x})$ and $f({\bf x})=x_k$) is available. 
The previous condition on the Koopman eigenfunctions is guaranteed in Lemma~\ref{lemma:KEF_EP} thanks to their smoothness property. 
This is generally clarified in \cite{Kvalheim2021}. 
Furthermore, in \cite{Kvalheim2021}, the differentiability property of the Koopman eigenfunctions is clarified for a wider class of the nonlinear system \eqref{Nonlinear_ODE} such as $\mathcal{C}^r$ class beyond the analytic one assumed at the beginning of this section. 
In this sense, the defined PFs and GPs can work for a wider class of nonlinear systems.
\end{remark}

\begin{remark}\label{Rem_SinM}
The definitions \eqref{PF_NL} to \eqref{GP_SinM_NL_High} are similar.
For example, consider $P^k_j({\bf x})$ in \eqref{PF_NL} and $P^k_{i(j)}({\bf x})$ in \eqref{GP_SinM_NL}. 
Both are characterized by the partial derivative of a Koopman eigenfunction $\phi_j$ with respect to $x_k$. 
Since $V_{{jk}}$ in \eqref{PF_NL} and $V_{{ik}}$ in \eqref{GP_SinM_NL} are constants, the state-dependency of $P_j^k$ and $P_{i(j)}^k$ is the same. 
The Koopman modes $V_{{jk}}$ and $V_{{ik}}$ determine the amplification and phase (argument) shift applied to the common partial derivative. 
\end{remark}

Now, let ${\rm d}x_k(t; {\bf x}^0, {\delta{\bf x}})$ denote the variation of the $k$-th state in the solution of the nonlinear system \eqref{Nonlinear_ODE} under an initial change $\delta{\bf x}\in\mathbb{R}^n$, defined as
\begin{equation}
{\rm d}x_k(t; {\bf x}^0, {\delta{\bf x}})
:= \bar{x}_k(t; {\bf x}^0+{\delta{\bf x}})-\bar{x}_k(t; {\bf x}^0).
\label{x{j_n}onlinear}
\end{equation}
Also, let ${\rm d}z_j(t; {\bf x}^0, {\delta{\bf x}})$ denote the variation of the $j$-th mode, defined as
\begin{equation}
{\rm d}z_j(t; {\bf x}^0, {\delta{\bf x}}) := \bar{z}_j(t; {\bf x}^0+{\delta{\bf x}}) - \bar{z}_j(t; {\bf x}^0),
\label{z{j_n}onlinear}
\end{equation}
where the mode and its time evolution are as in Lemma~\ref{lemma:MV_EP}. 
Theorem \ref{DEF_KMD_PF} below provides that the PFs and GPs in Definitions~\ref{def_PFGP_NL} and \ref{def_PFGP_NL_High} characterize the variational dynamics for the nonlinear system \eqref{Nonlinear_ODE}, which is parallel to the linear case in Proposition~\ref{prop:Variation_LTI}. 
\begin{theorem}
\label{DEF_KMD_PF}
Let the nonlinear system \eqref{Nonlinear_ODE} {possess} a stable EP ${\bf x}^\ast$ with a basin of attraction $\mathbb{B}({\bf x}^\ast)$. 
For all ${\bf x}^0\in\mathbb{B}({\bf x}^\ast)$, the variational dynamics \eqref{x{j_n}onlinear} of the system in the $k$-th state $x_k$ under an infinitesimal change ${\delta{\bf x}}$ from the initial state ${\bf x}^0$ are represented as follows:
\begin{align}
{\rm d}x_k(t; {\bf x}^0, {\delta{\bf x}})
&\approx \sum^{n}_{j=1}{\rm e}^{\lambda_jt} {P_{j}^{k}}({\bf x}^0) \delta x_k \nonumber\\
& +\sum_{\substack{\ell = 1\\\ell \neq k}}^{n} 
\sum_{j=1}^{n} {\rm e}^{\lambda_j t} P_{j}^{k(\ell)}({\bf x}^0) \delta x_\ell  \nonumber  \\
& +\sum^\infty_{\substack{{j_1},\ldots,{j_n}\in\mathbb{N}_0 \\ {j_1}+\cdots+{j_n}>1}} 
{\rm e}^{({j_1}\lambda_1+\cdots+{j_n}\lambda_n)t}   
P_{\langle j_1\cdots j_n\rangle}^{k}({\bf x}^0) \delta x_k  \nonumber  \\
& +\sum_{\substack{\ell = 1\\\ell \neq k}}^{n} 
\sum^\infty_{\substack{{j_1},\ldots,{j_n}\in\mathbb{N}_0 \\ {j_1}+\cdots+{j_n}>1}}
{\rm e}^{({j_1}\lambda_1+\cdots+{j_n}\lambda_n)t}
P_{\langle j_1\cdots j_n\rangle}^ {k(\ell)}({\bf x}^0) \delta x_\ell,
\label{dx}
\end{align}
where ${P_{j}^{k}}({\bf x}^0)$ and $P_{j}^{k(\ell)}({\bf x}^0)$ are as in Definition~\ref{def_PFGP_NL}, and $P_{\langle j_1\cdots j_n\rangle}^{k}({\bf x}^0)$ and $P_{\langle j_1\cdots j_n\rangle}^ {k(\ell)}({\bf x}^0)$ are as in Definition~\ref{def_PFGP_NL_High}. 
Furthermore, the variational dynamics \eqref{z{j_n}onlinear} in the $j$-th mode $z_j$ are represented as follows:
\begin{align}
{\rm d}z_j(t; {\bf x}^0, {\delta{\bf x}})
&\approx  
{\rm e}^{\lambda_j t} \sum_{k=1}^{n}  {P_{j}^{k}} ({\bf x}^0) \delta z_j^0
\notag \\
& + {\rm e}^{\lambda_j t} \sum_{\substack{i = 1\\i \neq j}}^{n} \sum_{k=1}^{n} 
P_{i(j)}^{k}({\bf x}^0) \delta z_i^0
\notag \\
& + {\rm e}^{\lambda_j t}  \sum^\infty_{\substack{{j_1},\ldots,{j_n}\in\mathbb{N}_0 \\ {j_1}+\cdots+{j_n}>1}} 
\sum_{k=1}^{n}
P_{\langle j_1\cdots j_n\rangle (j)}^{k}({\bf x}^0) \delta z_{\langle j_1\cdots j_n\rangle}^0,
\label{dz}
\end{align}
where $P_{i(j)}^{k}({\bf x}^0)$ is as in Definition~\ref{def_PFGP_NL}, $P_{\langle j_1\cdots j_n\rangle (j)}^{k}({\bf x}^0)$ is as in Definition~\ref{def_PFGP_NL_High}, and $\delta z_j^0:=\phi_j({\bf x}^0+\delta{\bf x})-\phi_j({\bf x}^0)$
and 
$\delta z_{\langle j_1\cdots j_n\rangle}^0:=\phi_{\langle j_1\cdots j_n\rangle}({\bf x}^0+\delta{\bf x})-\phi_{\langle j_1\cdots j_n\rangle}({\bf x}^0)$
represent the initial changes to the modes associated with the initial change $\delta{\bf x}$ at the state ${\bf x}^0$. 
\end{theorem}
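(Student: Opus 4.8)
The plan is to derive both \eqref{dx} and \eqref{dz} directly from the Koopman mode decomposition \eqref{KMD_X} of Lemma~\ref{lemma:KMD_EP}, combined with a first-order Taylor expansion of each Koopman eigenfunction about ${\bf x}^0$. This is the nonlinear counterpart of the variational derivation that produced Proposition~\ref{prop:Variation_LTI} in the linear case, and the ``$\approx$'' in both displays will originate from discarding the second- and higher-order terms in $\delta{\bf x}$.

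For the state variation \eqref{dx}, I would write the $k$-th component of \eqref{KMD_X} for the two initial conditions ${\bf x}^0+\delta{\bf x}$ and ${\bf x}^0$ and subtract. The constant offset $x_k^\ast$ cancels, leaving ${\rm d}x_k(t;{\bf x}^0,\delta{\bf x})$ as a sum over principal and higher-order modes of terms of the form ${\rm e}^{(\cdots)t}[\phi({\bf x}^0+\delta{\bf x})-\phi({\bf x}^0)]V_{(\cdot)k}$. Since the Koopman eigenfunctions are of class $\mathscr{C}^1$ on $\mathbb{B}({\bf x}^\ast)$ by Lemma~\ref{lemma:KEF_EP}, I would replace each such difference by its first-order term $\sum_{\ell=1}^n(\partial\phi/\partial x_\ell)({\bf x}^0)\,\delta x_\ell$. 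Splitting the inner $\ell$-sum into the diagonal part $\ell=k$ and the off-diagonal part $\ell\neq k$, and recognizing the four resulting coefficient groups as the PF $P_j^k$, the mode-in-state GP $P_j^{k(\ell)}$, and their higher-order analogues $P_{\langle j_1\cdots j_n\rangle}^k$ and $P_{\langle j_1\cdots j_n\rangle}^{k(\ell)}$ from Definitions~\ref{def_PFGP_NL} and \ref{def_PFGP_NL_High}, yields exactly the four lines of \eqref{dx}.

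For the mode variation \eqref{dz}, I would start from Lemma~\ref{lemma:MV_EP}, which gives $\bar{z}_j(t;{\bf x}^0)={\rm e}^{\lambda_j t}\phi_j({\bf x}^0)$, so that ${\rm d}z_j(t;{\bf x}^0,\delta{\bf x})={\rm e}^{\lambda_j t}\,\delta z_j^0$ holds exactly. The key step is to re-express this in terms of \emph{all} the modal variations rather than $\delta z_j^0$ alone. To do so, I would Taylor-expand $\delta z_j^0\approx\sum_{k=1}^n(\partial\phi_j/\partial x_k)({\bf x}^0)\,\delta x_k$ and then substitute the identity obtained by evaluating \eqref{KMD_X} at $t=0$, namely $\delta x_k=\sum_i V_{ik}\,\delta z_i^0+\sum_{\langle j_1\cdots j_n\rangle}V_{\langle j_1\cdots j_n\rangle k}\,\delta z_{\langle j_1\cdots j_n\rangle}^0$, which is the first variation of $x_k=x_k^\ast+\sum_i\phi_i({\bf x})V_{ik}+\cdots$. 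Interchanging the $i$- (respectively $\langle j_1\cdots j_n\rangle$-) and $k$-summations and collecting $(\partial\phi_j/\partial x_k)({\bf x}^0)V_{ik}$ as the state-in-mode GP $P_{i(j)}^k({\bf x}^0)$ of \eqref{GP_SinM_NL} (and likewise the higher-order coefficient as $P_{\langle j_1\cdots j_n\rangle(j)}^k({\bf x}^0)$ of \eqref{GP_SinM_NL_High}) produces the three lines of \eqref{dz}; isolating the $i=j$ term, for which $P_{j(j)}^k=P_j^k$, recovers the leading PF contribution.

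I expect the main obstacle to be rigor rather than algebra: justifying the termwise differentiation and the interchange of the infinite KMD summation with the limit $\|\delta{\bf x}\|\to 0$. This requires that the higher-order series in \eqref{KMD_X} converge together with its first derivatives, uniformly on a neighborhood of ${\bf x}^0$ inside $\mathbb{B}({\bf x}^\ast)$. Convergence of the KMD series itself is guaranteed on the modulated Segal--Bargmann space by Lemma~\ref{lemma:KMD_EP}; the remaining care is to control the $\mathscr{C}^1$ Taylor remainder uniformly across the series so that the ``$\approx$'' relations become exact in the infinitesimal limit, and to confirm that the $t=0$ substitution used for \eqref{dz} is legitimate term by term.
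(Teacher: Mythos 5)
Your proposal is correct and takes essentially the same approach as the paper's proof: both results follow from the KMD \eqref{KMD_X} of Lemma~\ref{lemma:KMD_EP} combined with a first-order Taylor expansion in $\delta{\bf x}$ (discarding ${\cal O}(\|\delta{\bf x}\|^2)$), and your key step for \eqref{dz}---substituting the exact $t=0$ identity $\delta x_k=\sum_{i}V_{ik}\,\delta z_i^0+\sum V_{\langle j_1\cdots j_n\rangle k}\,\delta z_{\langle j_1\cdots j_n\rangle}^0$ into the first-order expansion of $\delta z_j^0$---is precisely the paper's argument. The only immaterial difference is the order of operations in \eqref{dx}: you subtract the two KMD series and then expand each eigenfunction difference, whereas the paper Taylor-expands the flow $\bar{x}_k(t;{\bf x})$ in the initial state first and then evaluates the resulting partial derivatives termwise from the KMD.
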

\begin{proof}
See Appendix~\ref{app:DEF_KMD_PF}.
\end{proof}
\begin{remark}
\label{Remark:Linear}
Clearly, the variational dynamics \eqref{dx} and \eqref{dz} are close to \eqref{dx_LTI} and \eqref{dz_LTI} for the LTI system \eqref{LTI}. 
In fact, Proposition~\ref{prop:Variation_LTI} for the LTI system is a special case of Theorem~\ref{DEF_KMD_PF}.  
It is shown in \cite{Mezic2013} that by assuming $n$ distinct eigenvalues $\lambda_j$ of $\bf A$ with left eigenvectors ${\bf u}_j$, {the $n$ pairs of} $\lambda_j$ and $\phi_j({\bf x}) = {\bf u}_j^\top {\bf x}$ ($j=1,\ldots,n$) correspond to the pairs of Koopman principal eigenvalues and eigenfunctions for the LTI system. 
Equation~\eqref{xk_KMD_LTI} is thus exactly the KMD for the LTI system, which is a finite series. 
The Koopman mode ${\bf V}_j$ corresponds to the right eigenvector ${\bf v}_j$ of $\bf A$. 
Here, for the LTI system, the higher-order terms indexed by $\langle j_1\cdots j_n\rangle$ disappear in \eqref{dx} and \eqref{dz}. 
Furthermore, for the LTI system, the PFs and GPs in Definition~\ref{def_PFGP_NL} become \begin{align}
{P_{j}^{k}}({\bf x})& = \frac{\partial({\bf u}_j^\top{\bf x})}{\partial x_k}v_{{jk}} = u_{{jk}}v_{{jk}}, \notag\\
P_{j}^{k(\ell)}({\bf x})& = \frac{\partial({\bf u}_j^\top{\bf x})}{\partial x_\ell}v_{{jk}} = u_{{j\ell}}v_{{jk}}, \notag\\
P_{i(j)}^{k}({\bf x})& = \frac{\partial({\bf u}_j^\top{\bf x})}{\partial x_k}v_{{ik}} = u_{{jk}}v_{{ik}},\notag
\end{align}
and are identical to the ones in Definitions~\ref{def_MinSPF_Linear} to \ref{def_SinMGP_Linear}. 
The state-dependent property disappears by the partial derivative of the state. 
Note that, for the LTI system, the state-in-mode GP $P_{i(j)}^{k}({\bf x})$ does not appear in the evolution of ${\rm d}z_j(t)$. 
Also, the initial change{s} $\delta z_j^0$ in the modes, which are dependent on ${\bf x}^0$ in the nonlinear case, are rendered to the linear case \eqref{dz_LTI}. 
\end{remark}
\begin{remark}
The variational dynamics \eqref{dx} and \eqref{dz} are ${\bf x}^0$-dependent and thus \emph{non-uniform} over the domain of attraction. 
This is intrinsic to the nonlinear vector field, where a mapping that assigns each state to a vector in the corresponding tangent space is \emph{not linear}, and is clearly different from the linear vector field as pointed out in Remark~\ref{remark:uniform_LTI}. 
Our analysis shows that the concept of PFs and GPs characterizes the variational dynamics for both linear and nonlinear vector fields. 
The state-dependent definitions of PFs and GPs in this paper are consistent with the intrinsic property of the nonlinear vector field.
The state's dependency of PFs and GPs over the domain of attraction globally is guaranteed with the Koopman operator framework.
\end{remark}
\begin{remark}
The dynamical meaning of PFs and GPs in the nonlinear case parallels that of the LTI system in Remark~\ref{remark:Physical_LTI}. 
The mode-in-state PF ${P_{j}^{k}}({\bf x}^0)$ and mode-in-state GP $P_{j}^{k(\ell)}({\bf x}^0)$ are interpreted as metrics that quantify the contribution of the $j$-th  mode in the variational dynamics of the $k$-th state element $x_k$ \emph{around ${\bf x}={\bf x}^0$}.
The mode-in-state PF ${P_{j}^{k}}({\bf x}^0)$ is associated with the change in $x_k$, and mode-in-state GP $P_{j}^{k(\ell)}({\bf x}^0)$ is associated with the change in $x_\ell$, not restricted at $x_k$.
For $\ell = k$, $P_{j}^{k(k)}({\bf x}^0)$ becomes identical to ${P_{j}^{k}}({\bf x}^0)$.
Conversely, the state-in-mode PF ${P_{j}^{k}}({\bf x}^0)$ is interpreted as a metric that quantifies the contribution of the $k$-th state element $x_k$ in the variational dynamics of the $j$-th mode $z_j$ \emph{around $z_j=\phi_j({\bf x}^0)$}. 
The state-in-mode GP $P_{i(j)}^k({\bf x}^0)$ is interpreted as a metric that evaluates the contribution of $i$-th mode $z_i$ in the evolution of $z_j$, under the initial change in $x_k$. 
For $i = j$, $P_{j(j)}^{k}({\bf x}^0)$ becomes identical to ${P_{j}^{k}}({\bf x}^0)$.
As a new finding in the nonlinear system, it becomes clear that the state-in-mode GP $P_{i(j)}^k$ appears in the variational dynamics \eqref{dz} of the mode $z_j$ (see the second and third terms on the right-hand side), whereas the state-in-mode GP $P_{i(j)}^k$ does not appear in the LTI case \eqref{zj}.
\end{remark}

Above, we considered the PFs and GPs defined globally in the basin of attraction for the nonlinear system with a stable EP. 
The dynamics around EPs were the target of the preceding works \cite{Sanchez-Gasca2005, Hamzi2014, Tian2018, Hamzi2020, Netto2019}. 
Below, we describe the PFs and GPs defined globally (in the sense of a domain of attraction) for the nonlinear system with a stable LC, which has not been targeted.
The developing theory here parallels the case of stable EP according to the Koopman operator framework \cite{Mezic2020, Susuki2021}. 
The next two lemmas clarify the existence of modal variables for a nonlinear system with a stable LC. 
\begin{lemma}
\label{lemma:KE_LC}
Let the nonlinear system \eqref{Nonlinear_ODE} possess a stable LC, denoted as $\Gamma$, with the basic angular frequency $\Omega$ and a basin of attraction $\mathbb{B}({\Gamma}) \subseteq {\mathbb R}^n$. 
Let the $n-1$ characteristic multipliers $\mu_j\in\mathbb{C}$ ($j=2,\ldots,n$) of $\Gamma$ be distinct, strictly positive, and sorted so that $1>\mu_2\geq \mu_3 \geq \cdots \geq \mu_{n}$.  
Any $\nu_j$ satisfying $\mu_j={\rm e}^{2\pi\nu_j/\Omega}$ is called a characteristic exponent.
Under a suitable choice of the space $\mathscr{F}$ of observables $f: \mathbb{B}({\Gamma})\to\mathbb{C}$, the Koopman operator associated with \eqref{Nonlinear_ODE} possesses only the point spectrum, containing the $n$ Koopman principal eigenvalues $\lambda_j$ defined as
\[
\lambda_1 = {\rm i}\Omega, \quad \lambda_j = \nu_j \quad (j=2,\ldots,n).
\] 
For every $\lambda_j$, there exist associated Koopman eigenfunctions $\phi_j\in\mathscr{F}\setminus\{0\}$ that are smooth over $\mathbb{B}(\Gamma)$. 
Furthermore, there exist Koopman eigenvalues, generally expressed as 
\begin{align}
\lambda_{\langle j_1\cdots j_n\rangle} 
&= {\rm i} j_1 \Omega + (j_2\nu_2+\cdots+j_n\nu_{n}) \nonumber \\
&=  j_1 \lambda_1 + j_2\lambda_2+\cdots+j_n\lambda_{n}, 
\label{eqn:KE_LC}
\end{align}
with
\begin{equation}
(j_1,\ldots,j_n)\in \mathcal{I}:=\mathcal{I}_+\cup \mathcal{I}_{-}, \nonumber
\end{equation}
where the index set $\mathcal{I}_+$ is $\{(j_1,\ldots,j_n)\in\mathbb{N}^n_0~:~j_1+\cdots+j_n > 1\}$ and $\mathcal{I}_{-}$ is 
$\{(j_1,j_2,\ldots,j_n)\in\mathbb{Z}\times\mathbb{N}^{n-1}_0~:~j_1=-n,~n\in\mathbb{N}\}$. 
{And, the} associated Koopman eigenfunctions $\phi_{\langle j_1\cdots j_n\rangle} \in\mathscr{F}\setminus\{0\}$, defined in the same manner as in Lemma~\ref{lemma:KEF_EP}, are smooth over $\mathbb{B}(\Gamma)$. 
\end{lemma}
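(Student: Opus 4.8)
The plan is to mirror the proof of Lemma~\ref{lemma:KEF_EP} for the stable EP, replacing the linearizing conjugacy near a fixed point by the phase--amplitude (isochron--isostable) reduction attached to the attracting limit cycle $\Gamma$. The structural reason this works is that the stability hypothesis ($0<\mu_j<1$ for every $j$) makes $\Gamma$ normally hyperbolic, so the isochron foliation of $\mathbb{B}(\Gamma)$ and the transverse Floquet subbundles inherit the regularity of ${\bf F}$; this is what will ultimately deliver global smoothness of the eigenfunctions over $\mathbb{B}(\Gamma)$.

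First I would construct the oscillatory principal eigenfunction. By the theory of asymptotic phase there is a smooth phase map $\Theta:\mathbb{B}(\Gamma)\to\mathbb{R}/2\pi\mathbb{Z}$ whose level sets are the isochrons and which satisfies $\frac{\rm d}{{\rm d}t}\Theta({\bf S}^t({\bf x}))=\Omega$ along every trajectory. Setting $\phi_1:={\rm e}^{{\rm i}\Theta}$ gives $\mathcal{U}^t\phi_1=\phi_1\circ{\bf S}^t={\rm e}^{{\rm i}\Omega t}\phi_1$, so $\phi_1$ is a Koopman eigenfunction with $\lambda_1={\rm i}\Omega$. Since $\phi_1$ has unit modulus on $\Gamma$ and does not vanish on $\mathbb{B}(\Gamma)$, it is invertible in $\mathscr{F}$, and every integer power $\phi_1^{j_1}$ with $j_1\in\mathbb{Z}$ is again a smooth eigenfunction, with eigenvalue ${\rm i}j_1\Omega$; the negative powers are precisely what produce the branch $\mathcal{I}_-$.

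Next I would construct the $n-1$ transverse principal eigenfunctions from Floquet theory. The monodromy (linearized return) map around $\Gamma$ has the distinct multipliers $\mu_j$ with exponents $\nu_j$, and distinctness guarantees one-dimensional Floquet subbundles. Transporting the corresponding Floquet coordinates off $\Gamma$ along the flow yields smooth amplitude (isostable) functions $\phi_j:\mathbb{B}(\Gamma)\to\mathbb{C}$ obeying $\frac{\rm d}{{\rm d}t}\phi_j({\bf S}^t({\bf x}))=\nu_j\,\phi_j({\bf S}^t({\bf x}))$, hence $\mathcal{U}^t\phi_j={\rm e}^{\nu_j t}\phi_j$ with $\lambda_j=\nu_j$ for $j=2,\ldots,n$. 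Unlike $\phi_1$, these vanish on $\Gamma$, so only nonnegative integer powers of $\phi_j$ remain in $\mathscr{F}$. Combined with the homomorphism property $\mathcal{U}^t(fg)=(\mathcal{U}^tf)(\mathcal{U}^tg)$, the products $\phi_{\langle j_1\cdots j_n\rangle}=\phi_1^{j_1}\cdots\phi_n^{j_n}$ are eigenfunctions with eigenvalue $\sum_k j_k\lambda_k$, and the admissible exponents are exactly $\mathcal{I}=\mathcal{I}_+\cup\mathcal{I}_-$ as in \eqref{eqn:KE_LC}.

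The main obstacle is the assertion that, in a suitably chosen $\mathscr{F}$, the Koopman operator has \emph{only} point spectrum, with no continuous or residual part. I would address this by taking $\mathscr{F}$ as the limit-cycle analog of the modulated Segal--Bargmann space used in Lemma~\ref{lemma:KMD_EP}, namely a reproducing kernel Hilbert space adapted to the coordinates $(\Theta,\phi_2,\ldots,\phi_n)$, following \cite{Mezic2020, Susuki2021}. In these coordinates the semi-flow is conjugate to the diagonal action $(\theta,a_2,\ldots,a_n)\mapsto(\theta+\Omega t,{\rm e}^{\nu_2 t}a_2,\ldots,{\rm e}^{\nu_n t}a_n)$, for which the eigenfunctions above form a complete set; discreteness of the spectrum and smoothness over all of $\mathbb{B}(\Gamma)$ then reduce to the properties of this reference space together with the global regularity of the isochron/isostable foliation guaranteed by normal hyperbolicity. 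For the rigorous spectral and regularity statements I would cite \cite{Mezic2020, Susuki2021, Kvalheim2021} rather than reproduce them here.
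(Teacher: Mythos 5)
Your proposal is correct and is in substance the same as the paper's proof: the paper's entire argument for this lemma is a citation to Theorem~8.1 of \cite{Mezic2020} (with the convention for the $n$ principal eigenvalues taken from \cite{Mauroy2018}), and your sketch---the asymptotic-phase eigenfunction $\phi_1={\rm e}^{{\rm i}\Theta}$ with $\lambda_1={\rm i}\Omega$ whose invertibility yields the negative powers in $\mathcal{I}_-$, the Floquet/isostable coordinates giving $\lambda_j=\nu_j$ which vanish on $\Gamma$ and hence admit only nonnegative powers, products giving the eigenvalue lattice, and the RKHS construction for the pure point spectrum---is exactly an unpacking of what those cited results establish. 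Since you defer the rigorous spectral and smoothness claims to the same references the paper relies on (\cite{Mezic2020, Susuki2021, Kvalheim2021}), the two proofs coincide.
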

\begin{proof}
See Theorem~8.1 of \cite{Mezic2020}. 
The way of introducing $n$ Koopman principal eigenvalues is from \cite{Mauroy2018}. 
\end{proof}
\begin{lemma}
\label{lemma:mode_LC}
The Koopman eigenfunctions $\phi_j$ and $\phi_{\langle j_1\cdots j_n\rangle}$ in Lemma~\ref{lemma:KE_LC} provide modal variables for the nonlinear system \eqref{Nonlinear_ODE} with the stable LC $\Gamma$. 
That is, by defining $z_j:=\phi_j({\bf x})$ and $z_{\langle j_1\cdots j_n\rangle}:=\phi_{\langle j_1\cdots j_n\rangle}({\bf x})$, their time evolutions $\bar{z}(t, {\bf x}^0)=\phi_j({\bf S}^t({\bf x}^0))$ and $\bar{z}_{\langle j_1\cdots j_n\rangle}(t; {\bf x}^0)=\phi_{\langle j_1\cdots j_n\rangle}({\bf S}^t({\bf x}^0))$ are represented in the following manner: for $t\geq 0$,
\begin{align}
\bar{z}_{j}(t; {\bf x}^0) 
&= {\rm e}^{\lambda_jt} \bar{z}_{j}(0; {\bf x}^0) 
= {\rm e}^{\lambda_jt} \phi_{j}({\bf x}^0), \notag\\
\bar{z}_{\langle j_1\cdots j_n\rangle}(t; {\bf x}^0) 
&= {\rm e}^{(  j_1 \lambda_1 + \cdots + j_n\lambda_{n} )t} \bar{z}_{\langle j_1\cdots j_n\rangle}(0; {\bf x}^0) \notag \\
&= {\rm e}^{(  j_1 \lambda_1 + \cdots + j_n\lambda_{n} )t} \phi_{\langle j_1\cdots j_n\rangle}({\bf x}^0). 
\label{LC_Mode}
\end{align}
\end{lemma}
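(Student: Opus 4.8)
The plan is to verify both identities by direct substitution into the definition of the Koopman operator, exactly as in the equilibrium-point case of Lemma~\ref{lemma:MV_EP}. The essential observation is that the time evolution of a modal variable is precisely the action of the Koopman semigroup on the corresponding eigenfunction, evaluated at the initial condition. Since Lemma~\ref{lemma:KE_LC} already guarantees that each $\phi_j$ and each $\phi_{\langle j_1\cdots j_n\rangle}$ is a genuine Koopman eigenfunction on $\mathbb{B}(\Gamma)$, with eigenvalues $\lambda_j$ and $\lambda_{\langle j_1\cdots j_n\rangle}=j_1\lambda_1+\cdots+j_n\lambda_n$ respectively, the result reduces to a one-line computation once the definitions are unwound.

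First I would treat the principal modal variables. Starting from the definition $\bar{z}_j(t;{\bf x}^0)=\phi_j({\bf S}^t({\bf x}^0))$ and recalling that $\mathcal{U}^t\phi_j=\phi_j\circ{\bf S}^t$, I would write
\[
\bar{z}_j(t;{\bf x}^0)=(\mathcal{U}^t\phi_j)({\bf x}^0)={\rm e}^{\lambda_j t}\phi_j({\bf x}^0),
\]
where the final equality invokes the eigenfunction relation $\mathcal{U}^t\phi_j={\rm e}^{\lambda_j t}\phi_j$ from Lemma~\ref{lemma:KE_LC}. Setting $t=0$ identifies $\bar{z}_j(0;{\bf x}^0)=\phi_j({\bf x}^0)$, which yields the stated form $\bar{z}_j(t;{\bf x}^0)={\rm e}^{\lambda_j t}\bar{z}_j(0;{\bf x}^0)$.

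Next I would carry out the identical argument for the higher-order modal variables $z_{\langle j_1\cdots j_n\rangle}=\phi_{\langle j_1\cdots j_n\rangle}({\bf x})$. The only change is that the relevant eigenvalue is now $\lambda_{\langle j_1\cdots j_n\rangle}=j_1\lambda_1+\cdots+j_n\lambda_n$, as established in \eqref{eqn:KE_LC}; substituting into $\bar{z}_{\langle j_1\cdots j_n\rangle}(t;{\bf x}^0)=(\mathcal{U}^t\phi_{\langle j_1\cdots j_n\rangle})({\bf x}^0)$ delivers the second displayed identity directly. I do not expect any genuine obstacle: the entire content has been front-loaded into Lemma~\ref{lemma:KE_LC}, whose spectral characterization---the existence of the eigenfunctions and the additive/multiplicative structure of their eigenvalues on $\mathbb{B}(\Gamma)$---is the nontrivial input. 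The present lemma is merely the dynamical restatement of that spectral fact, so its proof is as short as its equilibrium-point analogue and may reasonably be stated as a straightforward consequence of the definitions.
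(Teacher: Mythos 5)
Your proof is correct and is exactly the ``straightforward'' argument the paper omits: unwinding $\bar{z}_j(t;{\bf x}^0)=(\mathcal{U}^t\phi_j)({\bf x}^0)$ and applying the eigenfunction relations from Lemma~\ref{lemma:KE_LC} for both the principal and higher-order eigenvalues. The paper itself states only ``The proof is straightforward and thus omitted,'' so your write-up supplies precisely the intended reasoning.
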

\begin{proof}
The proof is straightforward and thus omitted.
\end{proof}

Theorem \ref{Case_LC} below provides that the PFs and GPs proposed in this paper globally characterize the variational dynamics \eqref{x{j_n}onlinear} and \eqref{z{j_n}onlinear} for the nonlinear system \eqref{Nonlinear_ODE} with $\Gamma$. 
This implies that the common PFs and GPs work for {the} nonlinear systems with stable EPs and LCs. 
\begin{theorem}
\label{Case_LC}
Let the nonlinear system \eqref{Nonlinear_ODE} {possess} a stable LC $\Gamma$, with a basin of attraction $\mathbb{B}({\Gamma})$. 
For all ${\bf x}^0\in\mathbb{B}(\Gamma)$, the variational dynamics \eqref{x{j_n}onlinear} and \eqref{z{j_n}onlinear} in the $k$-th state $x_k$ and the $j$-th mode $z_j$ under an infinitesimal change $\delta {\bf x}$ from the initial state ${\bf x}^0$ are represented as follows:
\begin{align}
{\rm d}x_k(t; {\bf x}^0, {\delta{\bf x}}) 
&\approx \sum_{j=1}^{n} {\rm e}^{\lambda_j t} P^k_j({\bf x}^0) \delta x_k 
\notag \\
&+ \sum_{\substack{\ell = 1\\\ell \neq k}}^{n} 
\sum_{j=1}^{n} {\rm e}^{\lambda_j t} P^{k (\ell)}_j({\bf x}^0) \delta x_\ell
\notag \\
&+ \sum_{\mathcal{I}} 
{\rm e}^{(j_1 \lambda_1 +\cdots+j_n\lambda_{n})t}
P^k_{\langle j_1\cdots j_n\rangle}({\bf x}^0)  \delta x_k  \nonumber  \\
&+ \sum_{\substack{\ell = 1\\\ell \neq k}}^{n} 
\sum_{\mathcal{I}} 
{\rm e}^{(j_1 \lambda_1 +\cdots+j_n\lambda_{n})t}
P^{k (\ell)}_{\langle j_1\cdots j_n\rangle}({\bf x}^0) \delta x_\ell,
\label{dx_LC}
\end{align}
\begin{align}
{\rm d}z_j(t; {\bf x}^0, {\delta{\bf x}}) 
&\approx  
\sum_{k=1}^{n} {\rm e}^{\lambda_j t} P^k_j({\bf x}^0)  \delta z_j^0
\notag \\
&+ \sum_{\substack{i = 1\\i \neq j}}^{n} \sum_{k=1}^{n} {\rm e}^{\lambda_j t}
P^k_{i (j)}({\bf x}^0) \delta z_i^0
\notag \\
&+\sum_{\mathcal{I}}  \sum_{k=1}^{n}
{\rm e}^{\lambda_j t}
P^k_{\langle j_1\cdots j_n\rangle (j)}({\bf x}^0) 
\delta z_{\langle j_1\cdots j_n\rangle}^0,
\label{dz_LC}
\end{align}
where $P^k_j({\bf x}^0)$, $P_{j}^{k(\ell)}({\bf x}^0)$, and $P^k_{i(j)}({\bf x}^0)$ are introduced in Definition~\ref{def_PFGP_NL}, $P^k_{\langle j_1\cdots j_n\rangle}({\bf x}^0)$, $P^{k (\ell)}_{\langle j_1\cdots j_n\rangle}({\bf x}^0)$, and $P^k_{\langle j_1\cdots j_n\rangle  (j)}({\bf x}^0)$ are in Definition~\ref{def_PFGP_NL_High},  and $\delta z_j^0=\phi_j({\bf x}^0+\delta{\bf x})-\phi_j({\bf x}^0)$ and $\delta z_{\langle j_1\cdots j_n\rangle}^0=\phi_{\langle j_1\cdots j_n\rangle}({\bf x}^0+\delta{\bf x})-\phi_{\langle j_1\cdots j_n\rangle}({\bf x}^0)$ represent the initial changes to the modes associated with the initial change $\delta{\bf x}$ at the state ${\bf x}^0$.
\end{theorem}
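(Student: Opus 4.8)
The plan is to mirror the argument behind Theorem~\ref{DEF_KMD_PF} for the stable EP, replacing the KMD of Lemma~\ref{lemma:KMD_EP} with its limit-cycle counterpart. First I would record the KMD induced by the spectral data of Lemma~\ref{lemma:KE_LC}: following \cite{Mezic2020, Susuki2021}, for every ${\bf x}^0\in\mathbb{B}(\Gamma)$ and $t\geq 0$,
\begin{align*}
\bar{\bf x}(t; {\bf x}^0) &= \sum_{j=1}^{n}{\rm e}^{\lambda_j t}\phi_j({\bf x}^0){\bf V}_j \\
&\quad + \sum_{\mathcal{I}}{\rm e}^{(j_1\lambda_1+\cdots+j_n\lambda_n)t}\phi_{\langle j_1\cdots j_n\rangle}({\bf x}^0){\bf V}_{\langle j_1\cdots j_n\rangle},
\end{align*}
where the index set $\mathcal{I}$ and the eigenvalues $\lambda_j$ are those of Lemma~\ref{lemma:KE_LC}; unlike the EP case in \eqref{KMD_X}, there is no constant term ${\bf x}^\ast$, since the attractor is carried by the $\lambda_1={\rm i}\Omega$ harmonics indexed by $\mathcal{I}$.

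Next, for the state variation \eqref{dx_LC} I would form the difference $\bar{x}_k(t; {\bf x}^0+\delta{\bf x})-\bar{x}_k(t; {\bf x}^0)$ from the $k$-th component of the KMD above, so that each Koopman-mode coefficient $V_{jk}$ and $V_{\langle j_1\cdots j_n\rangle k}$ multiplies the eigenfunction increment $\phi_j({\bf x}^0+\delta{\bf x})-\phi_j({\bf x}^0)$ (resp. the higher-order increment). Since the $\phi_j$ and $\phi_{\langle j_1\cdots j_n\rangle}$ are smooth on $\mathbb{B}(\Gamma)$ by Lemma~\ref{lemma:KE_LC}, a first-order Taylor expansion in the infinitesimal $\delta{\bf x}$ gives $\phi_j({\bf x}^0+\delta{\bf x})-\phi_j({\bf x}^0)\approx \frac{\partial\phi_j}{\partial x_k}({\bf x}^0)\delta x_k+\sum_{\ell\neq k}\frac{\partial\phi_j}{\partial x_\ell}({\bf x}^0)\delta x_\ell$, and likewise for $\phi_{\langle j_1\cdots j_n\rangle}$. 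Substituting and identifying $\frac{\partial\phi_j}{\partial x_k}({\bf x}^0)V_{jk}=P_j^k({\bf x}^0)$ and $\frac{\partial\phi_j}{\partial x_\ell}({\bf x}^0)V_{jk}=P_j^{k(\ell)}({\bf x}^0)$ from Definition~\ref{def_PFGP_NL}, together with their higher-order analogues $P_{\langle j_1\cdots j_n\rangle}^k$ and $P_{\langle j_1\cdots j_n\rangle}^{k(\ell)}$ from Definition~\ref{def_PFGP_NL_High}, reproduces the four sums of \eqref{dx_LC} verbatim once the $\delta x_k$ and $\delta x_\ell$ ($\ell\neq k$) contributions are separated.

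For the mode variation \eqref{dz_LC} the computation is shorter but subtler. By Lemma~\ref{lemma:mode_LC}, $\bar{z}_j(t; {\bf x}^0)={\rm e}^{\lambda_j t}\phi_j({\bf x}^0)$, so ${\rm d}z_j={\rm e}^{\lambda_j t}(\phi_j({\bf x}^0+\delta{\bf x})-\phi_j({\bf x}^0))\approx {\rm e}^{\lambda_j t}\sum_{k=1}^{n}\frac{\partial\phi_j}{\partial x_k}({\bf x}^0)\delta x_k$. The key move is to re-express the raw increment $\delta x_k$ in modal coordinates: varying the KMD at $t=0$, where $\bar{\bf x}(0;{\bf x}^0)={\bf x}^0$, yields to first order $\delta x_k\approx \sum_{i=1}^{n}\delta z_i^0 V_{ik}+\sum_{\mathcal{I}}\delta z_{\langle j_1\cdots j_n\rangle}^0 V_{\langle j_1\cdots j_n\rangle k}$, with the modal increments defined exactly as in the statement. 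Inserting this and exchanging the order of summation, the coefficient of $\delta z_i^0$ becomes $\sum_k \frac{\partial\phi_j}{\partial x_k}({\bf x}^0)V_{ik}=\sum_k P_{i(j)}^k({\bf x}^0)$, which specializes to $\sum_k P_j^k({\bf x}^0)$ when $i=j$, while the coefficient of $\delta z_{\langle j_1\cdots j_n\rangle}^0$ is $\sum_k \frac{\partial\phi_j}{\partial x_k}({\bf x}^0)V_{\langle j_1\cdots j_n\rangle k}=\sum_k P_{\langle j_1\cdots j_n\rangle(j)}^k({\bf x}^0)$. This is precisely \eqref{dz_LC}, and it also explains why the state-in-mode GP, inert in the LTI case (Remark~\ref{remark:simGP_LTI}), survives here: the cross-coefficient $\sum_k \frac{\partial\phi_j}{\partial x_k}V_{ik}$ vanished in \eqref{zj} only through the biorthogonality ${\bf u}_j^\top{\bf v}_i=0$, which has no general nonlinear analogue.

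The main obstacle I expect is analytic rather than algebraic: justifying the two infinite series over $\mathcal{I}$ and their term-by-term differentiation. Unlike the EP index set, $\mathcal{I}=\mathcal{I}_+\cup\mathcal{I}_-$ in Lemma~\ref{lemma:KE_LC} contains the negative harmonics $j_1=-n$ along the phase direction, so uniform convergence of the KMD and of its first variation on compact subsets of $\mathbb{B}(\Gamma)$ must be invoked to legitimize differentiating under the summation and discarding the Taylor remainder to first order in $\|\delta{\bf x}\|$; this is exactly the regularity supplied by the modulated Segal--Bargmann setting of Lemma~\ref{lemma:KMD_EP}, adapted to $\Gamma$ as in \cite{Mezic2020, Susuki2021}. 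Once this convergence is secured, every step above is a formal manipulation of absolutely convergent series, and the $\approx$ signs record only the neglected $O(\|\delta{\bf x}\|^2)$ Taylor terms.
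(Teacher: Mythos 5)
Your proposal follows essentially the same route as the paper's proof: write down the KMD for the limit-cycle case over the index set $\mathcal{I}$ of Lemma~\ref{lemma:KE_LC}, take the first-order variation with respect to the initial state, and identify the resulting coefficients with Definitions~\ref{def_PFGP_NL} and \ref{def_PFGP_NL_High}; your treatment of ${\rm d}z_j$ --- expanding $\delta x_k$ into modal increments via the KMD at $t=0$ and exchanging the order of summation --- is exactly the paper's argument as well. One correction is needed, though: your claim that the LC-case KMD carries \emph{no} constant term is false. The paper's expansion \eqref{eqn:KMD_LC} retains ${\bf x}^\ast$, whose $k$-th entry is the time average of $x_k(t)$, i.e., the Koopman mode attached to the constant eigenfunction of eigenvalue $0$; this term cannot be ``carried by the $\lambda_1={\rm i}\Omega$ harmonics indexed by $\mathcal{I}$'' because the zero multi-index belongs to neither $\mathcal{I}_+$ (which requires $j_1+\cdots+j_n>1$) nor $\mathcal{I}_-$ (which requires $j_1<0$), so without ${\bf x}^\ast$ your series does not represent the flow. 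The slip happens to be harmless for the theorem itself, since ${\bf x}^\ast$ is independent of ${\bf x}^0$ and cancels in the difference $\bar{x}_k(t;{\bf x}^0+\delta{\bf x})-\bar{x}_k(t;{\bf x}^0)$, but as stated your intermediate KMD is incorrect and the justification you give for dropping the term is wrong. A second, minor point: the function space the paper invokes for the LC case is the averaging kernel Hilbert space of \cite{Mezic2020} defined on $\mathbb{B}(\Gamma)$, not the modulated Segal--Bargmann space, which is the EP-case construction of Lemma~\ref{lemma:KMD_EP}; your instinct that some reproducing-kernel structure must underwrite the convergence and term-by-term differentiation of the series over $\mathcal{I}$ is right, but the correct reference object differs.
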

\begin{proof}
See Appendix~\ref{app_LC}. 
\end{proof}

\section{Illustrative Examples}
\label{sec:examples}

This section presents illustrative examples of the PFs and GPs by applying them to simple two-dimensional models of nonlinear systems. 

\subsection{Nonlinear system with stable equilibrium point} 
\label{EX1}
 
Following \cite{Netto2019, Brunton2016}, let us consider a two-dimensional nonlinear system with a globally stable EP,
\begin{align}
\left\{
\centering
\begin{array}{l}
\dot{x}_1 =  -x_1 + x_2^2, \\ 
\dot{x}_2 =  - \sqrt{2}x_2.
\end{array}
\right. 
\label{EX1_System}
\end{align}
The EP is located at the origin and the eigenvalues of the {system's Jacobian matrix} are $\lambda_1=-1$ and $\lambda_2=-\sqrt{2}$.
The nonlinear system \eqref{EX1_System} is widely utilized in the Koopman operator framework where Koopman eigenfunctions and associated modes are analytically derived, implying that it works as an introductory example of our theory.
In this case, the two Koopman principal eigenvalues and associated Koopman eigenfunctions are
\begin{equation}
\left\{
\begin{array}{cl}
\lambda_1=-1, & \phi_1({\bf x})=x_1+\frac{1+2\sqrt{2}}{7}x_2^2,\\
\lambda_2=-\sqrt{2}, & \phi_2({\bf x})=x_2{.}
\end{array}
\right.
\label{KEF_1}
\end{equation}
where ${\bf x}=(x_1,x_2)^\top$. 
Also, as one higher-order mode, the following pair of Koopman eigenvalue and eigenfunction with ${j_1}=0$ and ${j_2}=2$ is found: 
\begin{equation}
{j_1}\lambda_1+{j_2}\lambda_2=-2\sqrt{2}, \quad 
\phi_1^{j_1}\phi_2^{j_2}({\bf x})=x_2^2.
\label{KEF_2}
\end{equation}
Then, the KMD for the states of \eqref{EX1_System} is derived as follows:
\begin{align} 
\begin{bmatrix}
\bar{x}_1(t; {\bf x}^0) \\
\bar{x}_2(t; {\bf x}^0) \\
\end{bmatrix}
&= {\rm e}^{\lambda_1 t}\phi_{1}({\bf x}^0) 
\begin{bmatrix}
1  \\
0 \\
\end{bmatrix}
+ {\rm e}^{\lambda_2 t}\phi_{2}({\bf x}^0) 
\begin{bmatrix}
0  \\
1 \\
\end{bmatrix} 
+ {\rm e}^{(0\lambda_1+2\lambda_2)t}{\phi^0_{1}\phi^2_{2}}({\bf x}^0) 
\begin{bmatrix}
\frac{-1-2\sqrt{2}}{7}  \\
0 \\
\end{bmatrix}, \notag
\end{align}
where the vectors on the right-hand side are the Koopman modes ${\bf V}_1$, ${\bf V}_2$, and ${\bf V}_{{\langle 02\rangle}}$. 

Now, the PFs and GPs are analytically derived for the nonlinear system \eqref{EX1_System}.
By using \eqref{PF_NL} and \eqref{PF_NL_High}, the PFs ${P_{j}^{k}}$ between the $j$-th mode and the $k$-th state as well as ${P_{{\langle j_1j_2\rangle}}^{k}}$ for the higher-order mode are the following:
\begin{equation} 
\begin{bmatrix}
{P_{1}^{1}}({\bf x}) & {P_{1}^{2}}({\bf x})\\
{P_{2}^{1}}({\bf x}) & {P_{2}^{2}}({\bf x})\\
{P_{{\langle 02\rangle}}^{1}}({\bf x}) & {P_{{\langle 02\rangle}}^{2}}({\bf x})\\
\end{bmatrix}
=
\begin{bmatrix}
1 & 0 \\
0 & 1 \\
0 & 0
\end{bmatrix}.\label{EX1_PF}
\end{equation}
Here, the PFs in \eqref{EX1_PF} can be interpreted as either mode-in-state PF and state-in-mode PF.
The PF ${P_{1}^{1}}({\bf x})=1$ implies that under an infinitesimal change in $x_1$, the first mode appears in the variation ${\rm d}x_1(t)$. 
The same implication holds for ${P_{2}^{2}}({\bf x})=1$ regarding ${\rm d}x_2(t)$. 
The PF ${P_{2}^{1}}({\bf x})=0$ implies that under a change in $x_1$, the second mode does not appear in ${\rm d}x_1(t)$. 
No state dependence appears in the PFs. 
These are because $\phi_1({\bf x})$ is linear in $x_1$ and $\phi_2({\bf x})$ is linear in $x_2$, respectively.
{Note that \eqref{EX1_PF} has} no information about the higher-order mode.

Next, by using \eqref{GP_MinS_NL} and \eqref{GP_MinS_NL_High}, the mode-in-state GPs are the following: 
\[
\begin{bmatrix}
P_{1}^{1(2)}({\bf x}) & P_{1}^{2(1)}({\bf x}) \\\noalign{\vskip 1mm}
P_{2}^{1(2)}({\bf x}) & P_{2}^{2(1)}({\bf x}) \\\noalign{\vskip 1mm}
P_{{\langle 02\rangle}}^{1(2)}({\bf x}) & P_{{\langle 02\rangle}}^{2(1)}({\bf x})\\ 
\end{bmatrix}
=
\begin{bmatrix}
\frac{2+4\sqrt{2}}{7}x_2 & 0 \\
0 & 0 \\
\frac{-2-4\sqrt{2}}{7}x_2 & 0
\end{bmatrix}.
\]
The mode-in-state GPs $P_{1}^{1(2)}({\bf x}) = \frac{2+4\sqrt{2}}{7}x_2$ and $P_{{\langle 02\rangle}}^{1(2)}({\bf x}) = \frac{-2-4\sqrt{2}}{7}x_2$ implies that under an infinitesimal change in $x_2$, both the first mode and higher-order mode appear in the variation ${\rm d}x_1(t)$.
Also, $P_{1}^{1(2)}({\bf x})$ and $P_{{\langle 02\rangle}}^{1(2)}({\bf x})$ are linear functions in $x_2$ and are not uniform, indicating their state-dependency on $x_2$, which stems from the nonlinear terms in the eigenfunctions $\phi_1({\bf x})$ and ${\phi^0_1\phi^2_2}({\bf x})$.
Besides, the mode-in-state GP{s} $P_{1}^{2(1)}({\bf x}) = P_{2}^{2(1)}({\bf x})  =P_{{\langle 02\rangle}}^{2(1)}({\bf x}) = 0$ imply that excited modes do not appear in ${\rm d}x_2(t)$, under an infinitesimal change in $x_1$. 

Finally, using \eqref{GP_SinM_NL} and \eqref{GP_SinM_NL_High}, the state-in-mode GPs are derived as follows:  
\begin{align} 
\begin{bmatrix}
   &  P_{2(1)}^{1}({\bf x}) &  P_{{\langle 02\rangle}(1)}^{1}({\bf x})\\  \noalign{\vskip 1mm}
P_{1 (2)}^{1}({\bf x})& &P_{{\langle 02\rangle}(2)}^{1}({\bf x}) \\ \noalign{\vskip 1mm}
P_{1 ({\langle 02\rangle})}^{1}({\bf x}) & P_{2({\langle 02\rangle})}^{1}({\bf x}) &
\end{bmatrix}
&=
\begin{bmatrix}
          & 0  & \frac{-1-2\sqrt{2}}{7} \\
0        &     & 0 \\
0        & 0  &
\end{bmatrix}. \nonumber \\
\begin{bmatrix}
   &  P_{2(1)}^{2}({\bf x}) &  P_{{\langle 02\rangle}(1)}^{2}({\bf x})\\  \noalign{\vskip 1mm}
P_{1 (2)}^{2}({\bf x})& &P_{{\langle 02\rangle}(2)}^{2}({\bf x}) \\ \noalign{\vskip 1mm}
P_{1 ({\langle 02\rangle})}^{2}({\bf x}) & P_{2({\langle 02\rangle})}^{2}({\bf x}) &
\end{bmatrix}
&=
\begin{bmatrix}
          & \frac{2+4\sqrt{2}}{7}x_2 & 0 \\
0        &                   & 0 \\
0        & {2x_2}  &
\end{bmatrix}, \nonumber 
\end{align}
The state-in-mode GPs $P_{2(1)}^{2}({\bf x}) = \frac{2+4\sqrt{2}}{7}x_2$ and $P_{2({\langle 02\rangle})}^{2}({\bf x}) = 2x_2$ imply that under an infinitesimal change in $x_2$, the first and higher-order modes are excited.
The evolutions of ${\rm d}z_1$ and ${\rm d}z_{\langle 02\rangle}$ are affected by the second mode.
The connection of the excitation of $\phi_2({\bf x})  = x_2$ to the first one $\phi_1({\bf x})$ and the higher-order one ${\phi_1^0 \phi_2^2}({\bf x})$ stems from its presence in $\phi_1({\bf x})=x_1+\frac{1+2\sqrt{2}}{7}x_2^2$ and ${\phi_1^0 \phi_2^2}({\bf x})=x^2_2$. 
See \eqref{KEF_1} and \eqref{KEF_2}.  
Also, $P_{2(1)}^{2}({\bf x})$ and $P_{2({\langle 02\rangle})}^{2}({\bf x})$ are not uniform, {which is a clear} indication that the magnitude of their effect depends on state $x_2$.
Besides, $P_{{\langle 02\rangle}(1)}^{1}({\bf x}) = \frac{-1-2\sqrt{2}}{7}$ implies that under an infinitesimal change in $x_1$, the first mode is excited, and the evolution of ${\rm d}z_1$ is affected by the higher-order mode.
However, from \eqref{KEF_2}, under an infinitesimal change in only $x_1$, note that the higher-order mode is not excited, and ${\rm d}z_1$ is not affected.

\subsection{Nonlinear system with stable limit cycle} 
\label{EX2}

\begin{figure*}[h]
  \begin{minipage}{0.45\linewidth}
    \centering
    \includegraphics[width=\hsize]{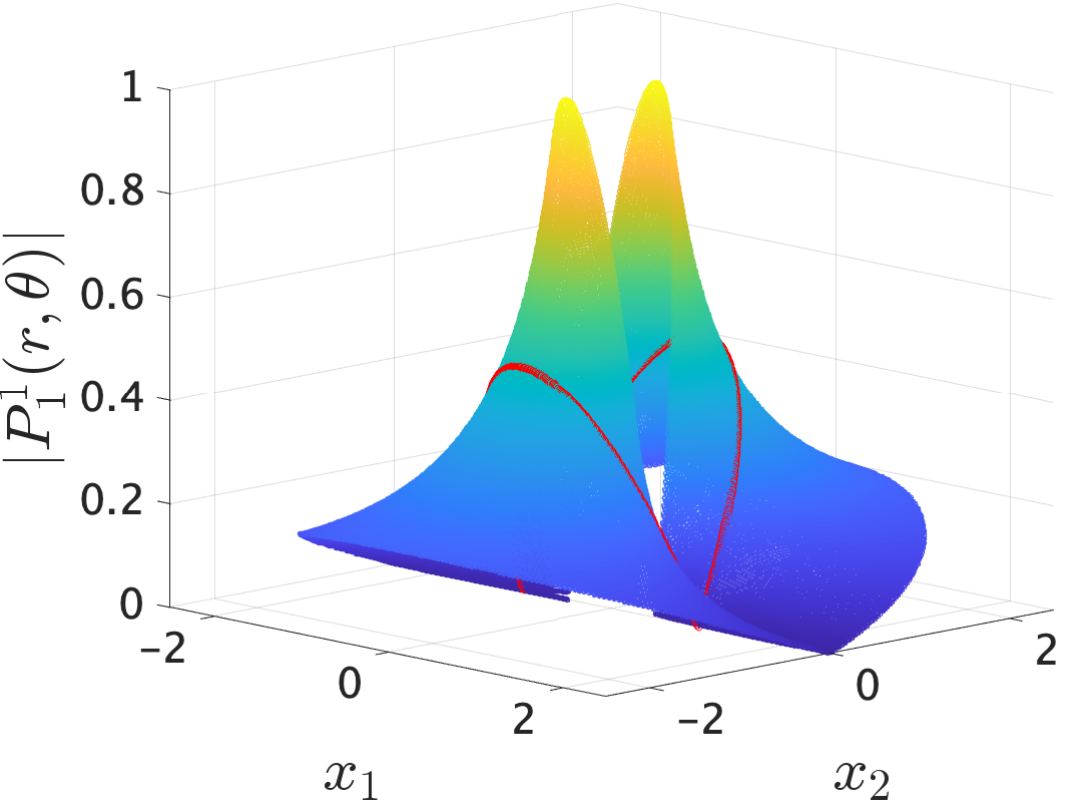}
    \subcaption{$P^{1}_1(r,\theta)$}
    \label{Fig:EX2_PF}
  \end{minipage}
  \begin{minipage}{0.45\linewidth}
    \centering
    \includegraphics[width=\hsize]{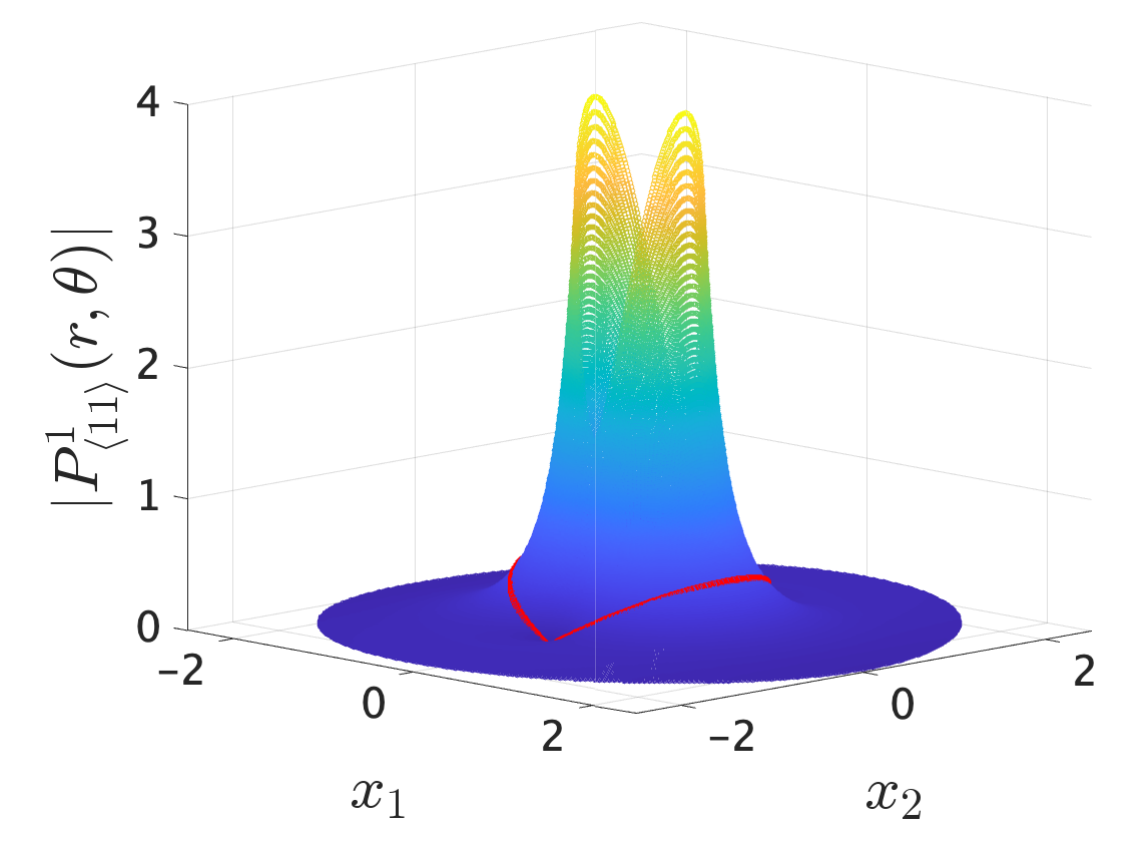}
    \subcaption{$P^1_{{\langle 11\rangle}}(r,\theta)$}
     \label{Fig:EX2_PF_Complex}
  \end{minipage}
    \begin{minipage}{0.45\linewidth}
    \centering
    \includegraphics[width=\hsize]{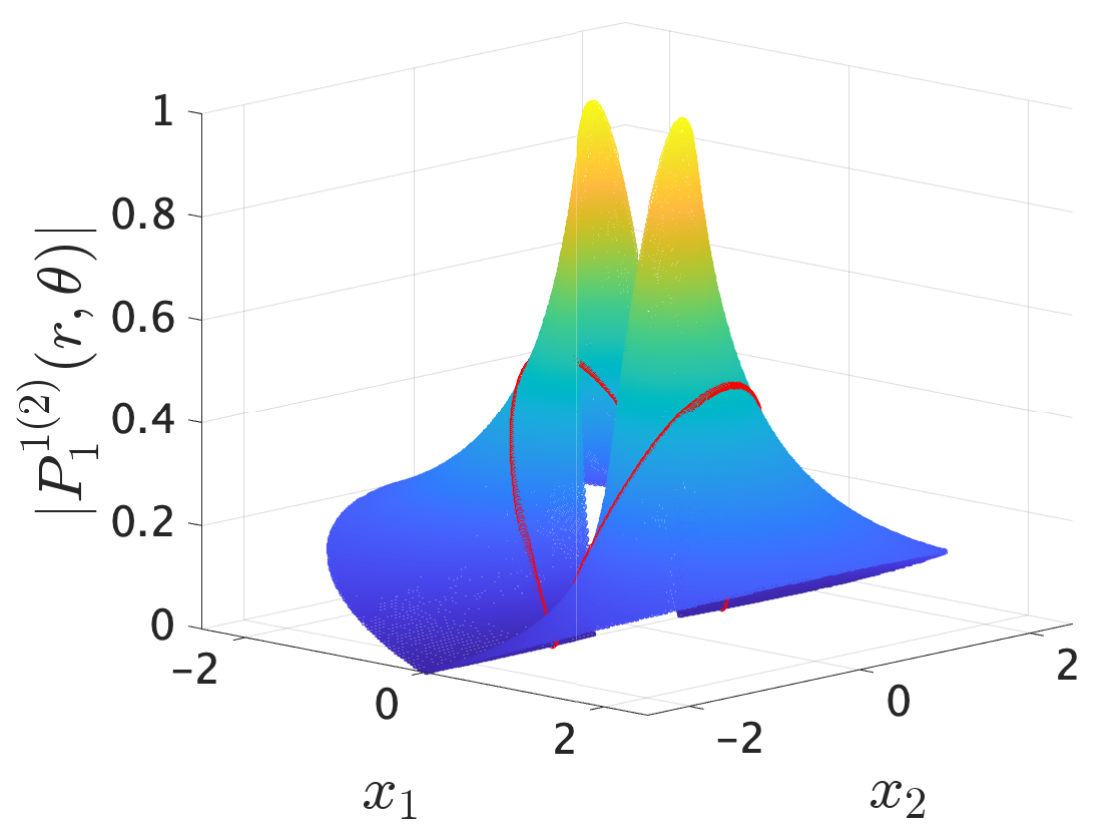}
    \subcaption{$P^{1(2)}_1(r,\theta)$}
     \label{Fig:EX2_GP_MinS}
  \end{minipage}
  \caption{
Participation Factors (PFs) $P^{1}_1(r,\theta)$ and $P^1_{{\langle 11\rangle}}(r,\theta)$, and the mode-in-state Generalized Participation (GP) $P^{1(2)}_1(r,\theta)$ for the nonlinear system \eqref{EX2_System} with the stable Limit Cycle (LC). 
The figures show the absolute values of the PFs and mode-in-state GP, and the \emph{red} curves show the location of the LC.
The PF $P^{1}_1(r,\theta)$ indicates the contribution of the first mode (pure rotational component) to the state $x_1$ under an infinitesimal change in $x_1$.  
The PF $P^1_{{\langle 11\rangle}}(r,\theta)$ indicates the contribution of the higher-order complex 
mode to the state $x_1$ under an infinitesimal change in $x_1$.  
The mode-in-state GP $P^{1(2)}_1(r,\theta)$ indicates the contribution of the first mode (pure rotational component) to the state $x_1$ under an infinitesimal change in $x_2$.
}
\label{Figs_EX2}
\end{figure*}

Now, let us consider a two-dimensional nonlinear system with a stable LC,
\begin{align}
  \left\{
  \centering
    \begin{array}{l}
        \dot x_1 =  x_1 - x_2 - x_1 (x_1^2 + x_2^2),   \\\noalign{\vskip 1mm}
        \dot x_2 =  x_1 + x_2 - x_2 (x_1^2 + x_2^2){.} 
    \end{array}
  \right.
  \label{EX2_System}
\end{align}
The origin is an unstable EP, and the unit circle $x^2_1+x^2_2=1$ corresponds to the stable LC $\Gamma$. 
By using a polar coordinate transformation $(r,\theta)\in(0,\infty)\times \mathbb{R}/(2\pi\mathbb{Z})$ via $x_1=r{\cos}{\theta}, x_2=r{\sin}{\theta}$, \eqref{EX2_System} is written as follows:
\begin{align}
  \left\{
  \centering
    \begin{array}{l}
        \dot r =  r - r^3,  \\
        \dot \theta =  1.    
    \end{array}
  \right.
  \label{EX2_Polar}
\end{align}
For this system, $r=1$ coincides with $\Gamma$, ``$-2$" is the linearized eigenvalue corresponding to the negative characteristic exponent $\nu_2$, and ``$1$" is the basic angular frequency $\Omega$ of $\Gamma$.  
By hand, the two Koopman principal eigenvalues and associated Koopman eigenfunctions in the polar coordinate are derived as
\[
\left\{
\begin{array}{ll}
\lambda_1={\rm i}, & \phi_1(r,\theta)={\rm e}^{{\rm i}\theta}, \\
\lambda_2=-2, & \displaystyle \phi_2(r,\theta) = \frac{r^2-1}{r^2}.
\end{array}
\right.
\]
Also, as one higher-order mode, the following pair of Koopman eigenvalue and eigenfunction with $j_1=j_2=1$ is found:
\begin{equation}
j_1\lambda_1+j_2\lambda_2={\rm i}-2, \quad 
{\phi^{j_1}_1\phi^{j_2}_2}({\bf x})={\rm e}^{{\rm i}\theta}\frac{r^2-1}{r^2}.
\end{equation}
As shown in Lemma~\ref{lemma:mode_LC}, the Koopman eigenfunctions provide modal variables for the nonlinear system \eqref{EX2_System}. 
The first mode $z_1=\phi_1(r,\theta)$ extracts the pure rotational component of the flow along $\Gamma$ with the basic angular frequency $\Omega$.
The second mode $z_2=\phi_2(r,\theta)$  extracts a convergent component of the flow perpendicular to $\Gamma$, and the higher-order mode $z_{{\langle 11\rangle}}={\phi^1_1\phi^1_2}({\bf x})$ exhibits a combination of the above two components.
Here, the flow $r(t)$ in \eqref{EX2_Polar} converges to the EP $r=1$ asymptotically from any $r_0\in(0,\infty)$.
The KMD-like representation of the flow can be given from {\cite{Gaspard1995}} as
\begin{align}
r(t)&= 1
+ \frac{1}{2} \phi_2(r_0,{}_\sqcup) {\rm e}^{\lambda_2t} 
+ \sum^{M}_{j\geq 2} \frac{1}{j!} \frac{(2j - 1)!!}{2^j} \phi_2(r_0,{}_\sqcup)^j {\rm e}^{j\lambda_2 t} 
+ {R(t)}, 
\notag
\end{align}
where $\sqcup$ stands for the arbitrary value of $\theta$, $M$ for a sufficiently large integer, $!!$ for the second factorial, and $R(t)$ for the residual term.
Note that the convergence of the residual term $R(t)$ to $0$ as $M\rightarrow \infty$ is guaranteed if $\frac{1}{\sqrt{2}} < r_0$ holds. 
Now, thanks to ${\rm e}^{{\rm i}\theta(t)}=\phi_1({}_\sqcup,\theta_0){\rm e}^{\lambda_1t}$ from \eqref{EX2_Polar} with an initial $\theta_0$ and $x_1(t)=r(t)\cos\theta(t)=r(t)\{{\rm e}^{{\rm i}\theta(t)}+{\rm e}^{-{\rm i}\theta(t)}\}/2$, the KMD for $x_1$ is approximately derived as follows:
\begin{align}
x_1(t) &= 
{\rm e}^{\lambda_1t}\phi_1(r_0,\theta_0)\frac{1}{2}
+{\rm e}^{(\lambda_1+\lambda_2)t}\phi_1(r_0,\theta_0)\phi_2(r_0,\theta_0)\frac{1}{2^2}
\nonumber\\
& +\sum_{j\geq 2}^{{M}}
{\rm e}^{(\lambda_1+j\lambda_2)t} \phi_1(r_0,\theta_0) 
\phi_2(r_0,\theta_0)^j \frac{1}{j!} \frac{(2j - 1)!!}{2^{j+1}} 
{+R_1(t)},
\label{EX2_x1_KMD}
\end{align}
where $R_1(t)$ again denotes the residual term. 
From $z_{{\langle 1j\rangle}}=\phi_{{\langle 1j\rangle}}(r,\theta)=\phi_1(r,\theta)\phi_2(r,\theta)^j$, $1/2$ and ${(2j - 1)!!}/({j!}2^{j+1})$ are the Koopman modes $V_{1 1}$ and $V_{{{{\langle 1j\rangle}}} 1}$ for $z_1$ and $z_{{{\langle 1j\rangle}}}$ ($j\geq 1$), respectively. 
The other modes, including $z_2$, do not appear in the time evolution of $x_1$ (that is, the corresponding modes are identically zero).  
The KMD for the state $x_2$ is approximately derived in the same manner as above and given by
\begin{align}
x_2(t) &= 
{\rm e}^{\lambda_1t}\phi_1(r_0,\theta_0)\frac{1}{2{\rm i}}
+ {\rm e}^{(\lambda_1+\lambda_2)t}\phi_1(r_0,\theta_0)\phi_2(r_0,\theta_0)\frac{1}{2^2{\rm i}}
\nonumber\\
& +\sum_{j\geq 2}^{{M}}
{\rm e}^{(\lambda_1+j\lambda_2)t}\phi_1(r_0,\theta_0)\phi_2(r_0,\theta_0)^j \frac{1}{j!}
\frac{(2j - 1)!!}{2^{j+1}{\rm i}}
 {+R_2(t),}
\label{EX2_x2_KMD}
\end{align}
with the residual term $R_2(t)$.

First, the PFs for the nonlinear system with $\Gamma$ are derived. 
By using \eqref{PF_NL} and \eqref{PF_NL_High} with the chain rule of differentiation, the PFs $P^k_1$ between the first mode and the $k$-th state as well as $P^k_{\langle11\rangle}$ for the higher-order mode are the following:
\begin{equation}
\left\{
\begin{aligned}
{P^1_{1}}(r,\theta) 
&= -{\rm i}\frac{\sin \theta}{2r}{\rm e}^{{\rm i}\theta}, \\
{P^2_{1}}(r,\theta)
&= \frac{\cos \theta}{2r}{\rm e}^{{\rm i}\theta},\\
{P^1_{{\langle 11\rangle}}}(r,\theta) 
&= \frac{{\rm e}^{ {\rm i} \theta}} {4r^3} \{2\cos \theta  - {\rm i}(r^2-1)\sin \theta \}, \\
{P^2_{{\langle 11\rangle}}}(r,\theta) 
&= -{\rm i}\frac{{\rm e}^{ {\rm i} \theta}} {4r^3} \{2\sin \theta + {\rm i}(r^2-1)\cos \theta\}.
\end{aligned}
\right.
\label{PFs_Example_LC}
\end{equation}
Here, the PFs in \eqref{PFs_Example_LC} can be interpretated as either mode-in-state PF and state-in-mode PF.
Note that $P^k_2(r,\theta)=0$ {holds} because the second mode does not appear in the KMDs \eqref{EX2_x1_KMD} {and} \eqref{EX2_x2_KMD}. 
Here, let us focus on $P^1_1(r,\theta)$. 
The absolute value of $P^1_{1}(r,\theta)$ is shown in Fig.~\ref{Fig:EX2_PF}.
The PF $P^1_{1}(r,\theta)$ indicates the contribution of the first mode (pure rotational component) to the state $x_1$ under an infinitesimal change in $x_1$. 
We see $P^1_{1}(r,\theta)$ is zero at $x_2 = 0$, implying that under a change in $x_1$, the first mode does not appear in the variation ${\rm d}x_1(t)$. 
That indicates the change of $x_1$ at $x_2=0$ does not affect the evolution of $z_1$.
That is, if $x_1$ infinitesimally changes at $x_2=0$, the rotational angle $\theta$ does not change and no effect appears in the pure rotational component of the flow.
It is because the change of $x_1$ at $x_2=0$ is parallel to the radial direction of the LC, where $r$ (i.e., the distance from the original point) changes directly but the rotational angle $\theta$ does not change at all, causing no effect in $\phi_1(r,\theta)={\rm e}^{{\rm i}\theta}$. 
On the other hand, $P^1_{1}(r,\theta)$ is relatively large at $x_1=0$. 
This implies that under a change in $x_1$, the first mode appears in ${\rm d}x_1(t)$. 
That indicates the change of  $x_1$ at $x_1=0$ affects the evolution of $z_1$.
That is, if $x_1$ infinitesimally changes at $x_1=0$, the rotational angle $\theta$ directly changes, and the pure rotational component of the flow is also affected.  
Then, the associated mode $\phi_1$ of the rotational dynamics appears in the evolution of ${\rm d}x_1(t)$. 
It is because the change of $x_1$ at $x_1=0$ is vertical to the radial direction of the LC, 
where $r$ does not change but the rotational angle $\theta$ changes directly, thereby affecting $\phi_1(r,\theta)={\rm e}^{{\rm i}\theta}$. 
Also, the absolute value of $P^1_{{\langle 11\rangle}}(r, \theta)$ is shown in Fig.~\ref{Fig:EX2_PF_Complex}.
The PF $P^1_{{\langle 11\rangle}}(r, \theta)$ indicates the contribution of the higher-order mode (a complex component of $\phi_1$ and $\phi_2$) to the state $x_1$ under an infinitesimal change in $x_1$. 
Since $P^1_{{\langle 11\rangle}}$ is related to the convergent component of the flow as well as the pure rotational one, it is not equal to zero at $x_2 = 0$, whereas $P^1_{1}$ does.

Next, by using \eqref{GP_MinS_NL} and \eqref{GP_MinS_NL_High}, the mode-in-state GP is the following: 
\begin{equation}
\left\{
\begin{aligned}
P^{1(2)}_1(r,\theta) &= {\rm i}\frac{\cos \theta}{2r}{\rm e}^{{\rm i}\theta}, \\
P^{2(1)}_1(r,\theta) &= - \frac{\sin \theta}{2r}{\rm e}^{{\rm i}\theta},  \\
P^{1(2)}_{{\langle 11\rangle}}(r,\theta)  &= \frac{{\rm e}^{ {\rm i} \theta}} {4r^3}          \{2\sin \theta + {\rm i}(r^2-1)\cos \theta\}, \\
P^{2(1)}_{{\langle 11\rangle}}(r,\theta) &= {-\rm i} \frac{{\rm e}^{ {\rm i} \theta}} {4r^3} \{2\cos \theta - {\rm i}(r^2-1)\sin \theta \}{.}
\end{aligned}
\right. 
\end{equation}
The absolute value of $P^{1(2)}_1(r,\theta)$ is shown in Fig.~\ref{Fig:EX2_GP_MinS}. 
The {mode-in-state} GP $P^{1(2)}_1(r,\theta)$ indicates the contribution of the first mode (pure rotational component) to the state $x_1$ under an infinitesimal change in $x_2$.
Its dynamical meaning can be discussed in the same manner as for $P^1_1(r,\theta)$. 
Here, we have the following relation of the mode-in-state PF and GP:
\[
P^1_j={\rm i}P^{2(1)}_j, \quad 
{\rm i}P^2_j=P^{1(2)}_j, \quad j\in\{1, {\langle 11\rangle}\}.
\]
This relation is natural by considering Remark~\ref{Rem_SinM}, the definition{s} of mode-in-state PF and GP, and the KMDs in \eqref{EX2_x1_KMD} and \eqref{EX2_x2_KMD}.
From Definition~\ref{DEF_KMD_PF}, the state dependency {is the same for} $P_{j}^{k}$ and $P_{j}^{\ell(k)}$, and the difference between $P_{j}^{k}$ and $P_{j}^{\ell(k)}$ for $j\in\{1, {\langle 11\rangle}\}$ is only from the Koopman modes $V_{{jk}}$ and $V_{{j\ell}}$.
Additionally, the decomposition of $\bf x$ is derived by $x_1 = r(t)\cos\theta(t) =r(t)\{{\rm e}^{{\rm i}\theta(t)}+{\rm e}^{-{\rm i}\theta(t)}\}/2$ and $x_2 = r(t)\sin\theta(t) =r(t)\{{\rm e}^{{\rm i}\theta(t)}-{\rm e}^{-{\rm i}\theta(t)}\}/2{\rm i}$.
For this, the Koopman modes are determined to be $V_{{j1}} = {\rm i} V_{{j2}}$, leading to the relation in the above equation.

Finally, using \eqref{GP_SinM_NL} and \eqref{GP_SinM_NL_High}, the state-in-mode GP is derived as follows:
\begin{equation}
\left\{
\begin{aligned}
{P^1_{{\langle 11\rangle}(1)}}(r,\theta) &= -{\rm i}\frac{\sin \theta}{4r}{\rm e}^{{\rm i}\theta}, \\
{P^2_{{\langle 11\rangle}(1)}}(r,\theta) &= \frac{\cos \theta}{4r}{\rm e}^{{\rm i}\theta}, \\
{P^1_{1({\langle 11\rangle})}}(r,\theta) &= \frac{{\rm e}^{{\rm i} \theta}}{2r^3} \{2\cos \theta  - {\rm i}(r^2-1)\sin \theta\}. \\
{P^2_{1({\langle 11\rangle})}}(r,\theta) &= -{\rm i}\frac{{\rm e}^{ {\rm i} \theta}} {2r^3} \{2\sin \theta + {\rm i}(r^2-1)\cos \theta\}.
\end{aligned}
\right.
\end{equation}
For example, let us take the case of $P^1_{{\langle 11\rangle}(1)}(r,\theta)$, which indicates the contribution of the higher-order complex mode to the first mode, under an infinitesimal change in $x_1$. 
Now, we have the following relation for the state-in-mode GP $P^1_{{\langle 11\rangle}(1)}(r,\theta)$ and PF $P^1_{1}(r,\theta)$:
\[
P^1_{{\langle 11\rangle}(1)}(r,\theta) = \frac{1}{2}P^1_1(r,\theta).
\]
The relation is natural from Remark~\ref{Rem_SinM}.
Both $P^1_{{\langle 11\rangle}(1)}(r,\theta)$ and $P^1_{1}(r,\theta)$ are characterized by the partial derivative of Koopman eigenfunction $\phi_1$ by $x_1$, leading to the same state dependency on $\bf x$.
The difference between $P^1_{{\langle 11\rangle}(1)}(r,\theta)$ and $P^1_{1}(r,\theta)$ stems from the Koopman modes $V_{1 1} = 1/2$ and $V_{{\langle 11\rangle} 1} = 1/4$ in the KMD, leading to the relation in the above equation.

\section{{Numerical Method}}
\label{sec:NA}

This section develops a numerical method to estimate the PFs for nonlinear systems. 
From Definitions~\ref{def_PFGP_NL} and \ref{def_PFGP_NL_High}, it is required to derive the Koopman eigenfunctions $\phi_j$. 
Although they could be derived in the examples of Section~\ref{sec:examples}, unfortunately, that is not the case for nonlinear systems in general.
A promising numerical method for estimating a set of Koopman eigenfunctions is the Extended DMD (EDMD) \cite{Williams2015, Klus2016}, where a set of observables or dictionary functions is utilized.
The EDMD is used for the PF computation in \cite{Takamichi2022, Netto2019}. 
The estimation accuracy provided by the EDMD depends on the availability of sufficiently rich time series data and appropriately chosen observables. 
See, e.g., \cite{Netto2021} for more detail.
Here, by using a classical technique in nonlinear oscillations \cite{Parker1989}, we propose an alternative {numerical} method for the PFs \emph{without} estimating the Koopman eigenfunctions. 
This method is also valid for estimating the mode-in-state GP.

\subsection{Proposed algorithm}
Before algorithm development, we provide a formulation of the numerical method guided by nonlinear dynamical system theory. 
The key idea is to start with the prolonged system in \cite{Mauroy2015} as follows:
%
\begin{equation}
\left\{
\begin{aligned}
\dot{\bf x}(t) &= {\bf F}({\bf x}(t)) \\
\dot{\bm \xi}(t)&={\rm D}{\bf F}({\bf x}(t)){\bm \xi}(t)
\end{aligned}
\right. 
\label{Prolonged}
\end{equation}
where the equation on $\bm \xi$ is called the variation equation \cite{Parker1989}. 
For \eqref{Prolonged}, according to \cite{Mauroy2015}, there exists $n$ Koopman eigenvalues $\lambda_j$ of multiplicity two.
The Koopman eigenfunctions belonging to $\lambda_j$ are as follows:
\begin{equation*}
\phi_j^{(1)}({\bf x}, \bm \xi) = \phi_j({\bf x}), \quad
\phi_j^{(2)}({\bf x}, \bm \xi) = \partial \phi_j({\bf x}) \bm \xi(t),
\end{equation*}
where $\phi_j({\bf x})$ is the Koopman eigenfunction for $\dot{\bf x}(t) = {\bf F}({\bf x}(t))$.
This relation holds for the higher-order Koopman eigenfunction $\phi_{\langle j_1\cdots j_n\rangle}({\bf x})$.
Here, we suppose that the time-evolution of $\xi_k$ can be expressed with the expansion on $\phi_j^{(2)}$ as follows:
\begin{equation}
\xi_k(t; {\bm \xi}^0)=\sum^n_{j=1}{\rm e}^{\lambda_jt}\underbrace{\phi_j^2({\bf x}^0, {\bm \xi}^0)V_{j \xi_k}}_{\hat{V}_{j \xi_k}}+{R_k(t)}, 
\label{eqn:KMD_variational}
\end{equation}
where {$R_k(t)$} stands for the residual term including the higher-order terms, $\lambda_j$ and $\phi_j$ {stand for} the $j$-th Koopman eigenvalue and eigenfunction of \eqref{Prolonged}, and $V_{j \xi_k}$ {is} the associated Koopman mode for the $k$-th variation $\xi_k$.
Here, the solution $\xi_k(t; {\bm \xi}^0)$ starting at ${\bm \xi}^0=[0,\ldots,0,\delta x_k,0,\ldots,0]^\top$ represents the variational dynamics of the $k$-th state $x_k$ after the initial variation $\delta x_k$ in the $k$-th state.
It corresponds to ${\rm d}x_k(t)$ in \eqref{dx} with $\delta x_\ell = 0~(\ell \neq k)$:
\begin{equation}
{\rm d}x_k(t; {\bf x}^0, {\delta{\bf x}})
= \sum^{n}_{j=1}{\rm e}^{\lambda_jt} {P_{j}^{k}}({\bf x}^0) \delta x_k + R^\prime_k(t),
\label{dxk_NA}
\end{equation}
where $R^\prime_k(t)$ stands for the residual term including the higher-order terms.

%
By comparing \eqref{eqn:KMD_variational} with \eqref{dxk_NA}, we have
\begin{equation}
\hat{V}_{j \xi_k}=P^k_j({\bf x}^0)\delta x_k.
\label{eqn:KEY}
\end{equation}
As mentioned below, $\hat{V}_{j \xi_k}$ can be computed through standard DMD for time series data of ${\bm \xi}$. 
Hence, with the information on $\delta x_k$, Equation~\eqref{eqn:KEY} makes it possible to directly estimate the PF $P^k_j({\bf x}^0)$ without estimating Koopman eigenfunctions. 
This idea is applicable to the numerical {estimation} of the mode-in-state GP $P^{k(\ell)}_j({\bf x}^0)$ by changing the initial variation $\delta x_k$ to $\delta x_\ell$ at the starting point $\bm \xi^0$. 
The higher-order variants $P^k_{\langle j_1\cdots j_n\rangle}({\bf x})$ and $P^{k (\ell)}_{\langle j_1\cdots j_n\rangle}({\bf x})$ can be estimated in the same manner as above.\footnote{It should be mentioned that estimating higher-order Koopman eigenvalues and eigenfunctions accurately is still challenging in the current existing DMD.} 

We now outline the algorithm for the above idea. 
As a working assumption, the model ${\bf F}({\bf x})$ of the target nonlinear system \eqref{Nonlinear_ODE} is known. 
The algorithm consists of three steps.
For the first step, the solution of the prolonged system \eqref{Prolonged} 
starting from the initial conditions ${\bf x}^0$ and ${\bm \xi^0}=\displaystyle [0,\ldots,0,\underbrace{\Delta}_{k{\rm -th}},0,\ldots,0]^\top$ is numerically pursued. 
{Here, $\Delta$ is a small real number, which can be set arbitrarily.}
Through uniform sampling $h$, time series data with a finite-integer length, 
\[
\left\{{\bf y}[t]:=[{\bf x}[t]^\top\,{\bm \xi}[t]^\top]^\top ~|~t=0,1,\ldots,{\rm (finite)} \right\},
\]
are obtained. 
For the second step, by using standard DMD (see, e.g., \cite{Kutz2016} and Chapter~7 of \cite{KoopmanBook}), the KMD of ${\bf y}[t]$ is estimated as follows:
\[
{\bf y}[t]\approx\sum^{\rm finite}_{j=1}\hat{\rho}_j^t{\bf \hat{V}}_j, \quad t=0,1,\ldots,{\rm (finite)},
\]
where $\hat{\rho}_j$ is the $j$-th DMD eigenvalue and ${\bf \hat{V}}_j$ is the associated DMD eigenvector.  
The DMD eigenvalues and eigenvectors provide approximations to the Koopman eigenvalues, $\lambda_{j} \approx \hat \lambda_j =\ln{\left(\hat{\rho}_{j}\right)} / {h}$, and (up to a constant $c\in\mathbb{R}$) to the Koopman modes, ${\bf V}_{j} \approx c\cdot{\bf \hat{V}}_{j}$. 
For the continuous time, the estimated KMD of $\bf y$ is written as follows:
\[
{\bf y}(t)
\approx \sum^{\rm finite}_{j=1} e^{\hat \lambda_j t} {\bf \hat{V}}_j.
\]
For the third step, from \eqref{eqn:KEY}, the mode-in-state PF $P^k_j({\bf x}^0)$ is approximately the following:
\[
P^k_j({\bf x}^0)\approx \frac{\hat{V}_{j \xi_k}}{{\Delta}}.
\]
where $\hat V_{j \xi_k}$ {is} an element of the estimated DMD eigenvector $\hat {\bf V}_j$ for the $k$-th variation $\xi_k$.
Here, by \eqref{eqn:KEY}, $V_{j\xi_k}$
depends on ${\bf x}_0$ and ${\bm \xi}_0$
Note that the index $j$ is chosen such that the estimated Koopman eigenvalue $\hat{\lambda}_j$ is close to that of a target mode of our analysis, for example, frequency and damping rate.
The algorithm procedure of the proposed method is summarized in Appendix~\ref{app_Alg}. 

Before moving on with a demonstration, it should be mentioned that the above algorithm {is not applied} to the numerical estimation of the state-in-mode GP $P^k_{i(j)}({\bf x}^0)$ (or $P^k_{ \langle j_1\cdots j_n\rangle (j)}({\bf x}^0)$). 
This is because it is required to extract the evolution of the modal variables $z_j$ and $z_{{\langle j_1\cdots j_n\rangle}}$, and this is not straightforward from the direct computation of the target system \eqref{Nonlinear_ODE}, namely, computation of the state's dynamics. 
For the state-in-mode GP, it is currently required to estimate the Koopman eigenfunctions {using} the EDMD.

\subsection{Demonstration} 

Now, we demonstrate the above algorithm for the two simple systems in Section~\ref{sec:examples}. 
That is done by computing solutions to a nonlinear system, from $N$ initial points with a MATLAB solver, where ${\bf x}^0_n$ denotes the $n$-th initial point ($n=1,\ldots,N$).
For example, let us take the case of mode-in-state GP $P^{1(2)}_1({\bf x}^0)$. 
The associated computational result is denoted by $\hat{P}^{1(2)}_1({\bf x}^0_n)$. 
To quantify the computational accuracy of the algorithm, we introduce the mean error $err(P^{1(2)}_1)$ as follows:
\begin{equation}
err(P^{1(2)}_1) := \frac{1}{N}\sum^N_{n=1}\left|P^{1(2)}_1({\bf x}^0_n)-\hat{P}^{1(2)}_1({\bf x}^0_n)\right|.
\label{Err}
\end{equation}
In this demonstration, the Prony-type DMD \cite{Susuki2015} is utilized, and an estimated Koopman eigenvalue $\hat \lambda_j$, which is close to an analytically derived value of the target Koopman eigenvalue is addressed. 
Note that, at some initial point, one can not estimate PFs and mode-in-state GP appropriately, since no $\hat{\lambda}_j$ close to the target $\lambda_j$ is obtained. 

\subsubsection{Nonlinear system \eqref{EX1_System}}

Figure~\ref{Fig:EX3} shows the computational result of mode-in-state GP ${\hat P}_{1}^{1(2)}({\bf x}^{0}_{n})$. 
In the figure, $x_1^{0}$ and $x_2^{0}$ are in the range $[-6,-6]$, the computation is done for $N=201 \times 201$ initial points ${\bf x}^0_n$ and $\Delta$ is set to $10^{-6}$. 
To estimate the mode-in-state GP, for each point, we generated the time series data $\{{\bf y}[0],\ldots, {\bf y}[5]\}$ through a uniform sampling $h = 0.3$, by solving \eqref{EX1_System} with a MATLAB solver. 
The setting was learned by trial and error.
We see that the computational result $\hat{P}^{1(2)}_1({\bf x}^0_n)$ is consistent with the true GP $P^{1(2)}_1({\bf x})=\frac{2+4\sqrt{2}}{7}x_2$. 
Here, we investigate the dependence of the computational result on the initial variation $\Delta$.
Table~\ref{table:Xi} shows the result of $err(P^{1(2)}_1)$, $err(P^{1(2)}_2)$, and $err(P^{1(2)}_{{{\langle 02 \rangle}}})$ under $\Delta = 10^{-6}, 10^{-3}, 1$, implying that in this demonstration, the setting of $\Delta$ does not change the computational result of {the} mode-in-state GP.
That can be explained by the linearity {of} the governing equation of $\bm \xi$. 
The variational equation on $\bm \xi$ is regarded as a time-variant linear system since that is introduced based on the Jacobian matrix {\rm D}{\bf F}.
Given that the system's state dynamics converge to stable EP or LC, the setting of $\Delta$ does not give a significant difference to the computational result of PFs and mode-in-state GP.
Table 
\ref{table:Xi} also shows the mode-in-state GP on higher-order mode $P^{1(2)}_{\langle 02 \rangle} ({\bf x})$ can also be estimated and the estimation is highly accurate. 
However, note that estimating the higher-order mode is generally challenging.

\begin{figure}[t] 
    \centering
   \includegraphics[width=0.6\textwidth]{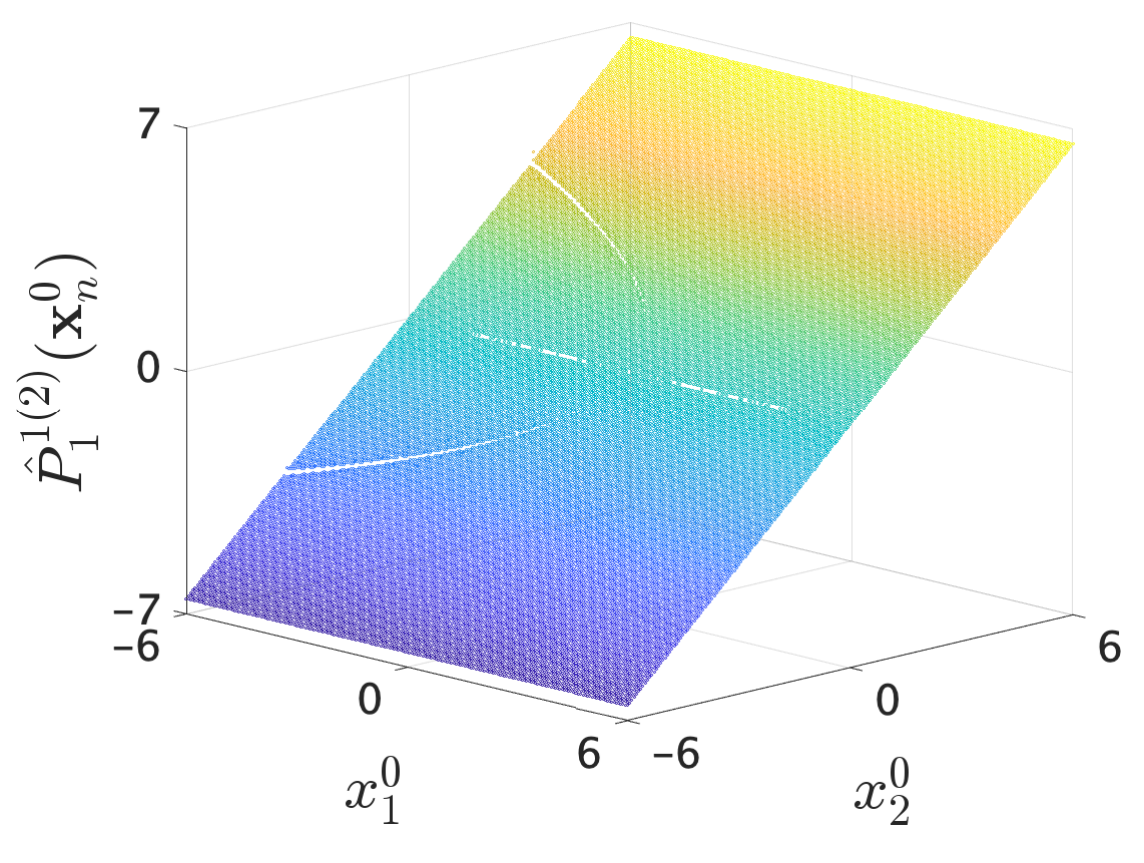}
   \caption{
   Computational result of the mode-in-state GP $\hat{P}_{1}^{1(2)}{({\bf x}^0_{n})}$ for the nonlinear system \eqref{EX1_System} with the stable equilibrium point. 
   The initial change $\Delta$ is set to $10^{-6}$.
   The result is consistent with the analytical form $\frac{2+4\sqrt{2}}{7}x_2$. 
   }
   \label{Fig:EX3}
\end{figure}

\begin{table}[t]
  \caption{Results of $err(P^{1(2)}_1)$, $err(P^{1(2)}_2)$ and $err(P^{1(2)}_{{\langle 02 \rangle}})$ for the nonlinear system \eqref{EX1_System} with the stable equilibrium point under $\Delta = 10^{-6}, 10^{-3}, 1$.}
  \label{table:Xi}
  \centering
  \begin{tabular}{|c|c|c|c|}
    \hline \noalign{\vskip 0.05mm}
                                     & $\Delta = 10^{-6}$          &  $\Delta = 10^{-3}$        & $\Delta = 1$ \\
    \hline 
    $err(P^{1(2)}_1)$    & $0.0012$  & $0.0007$ &  $0.0008$  \\
    \hline
    $err(P^{1(2)}_2)$ & $0.0016$  & $0.0009$ &  $0.0010$  \\
    \hline
    $err(P^{1(2)}_{{\langle 02\rangle}})$  & $0.0002$  & $0.0002$ & $0.0003$  \\
    \hline
  \end{tabular}
 \end{table}

\subsubsection{Nonlinear system \eqref{EX2_System}}

 \begin{figure*}[h]
  \begin{minipage}[b]{0.45\linewidth}
    \centering
    \includegraphics[width=\hsize]{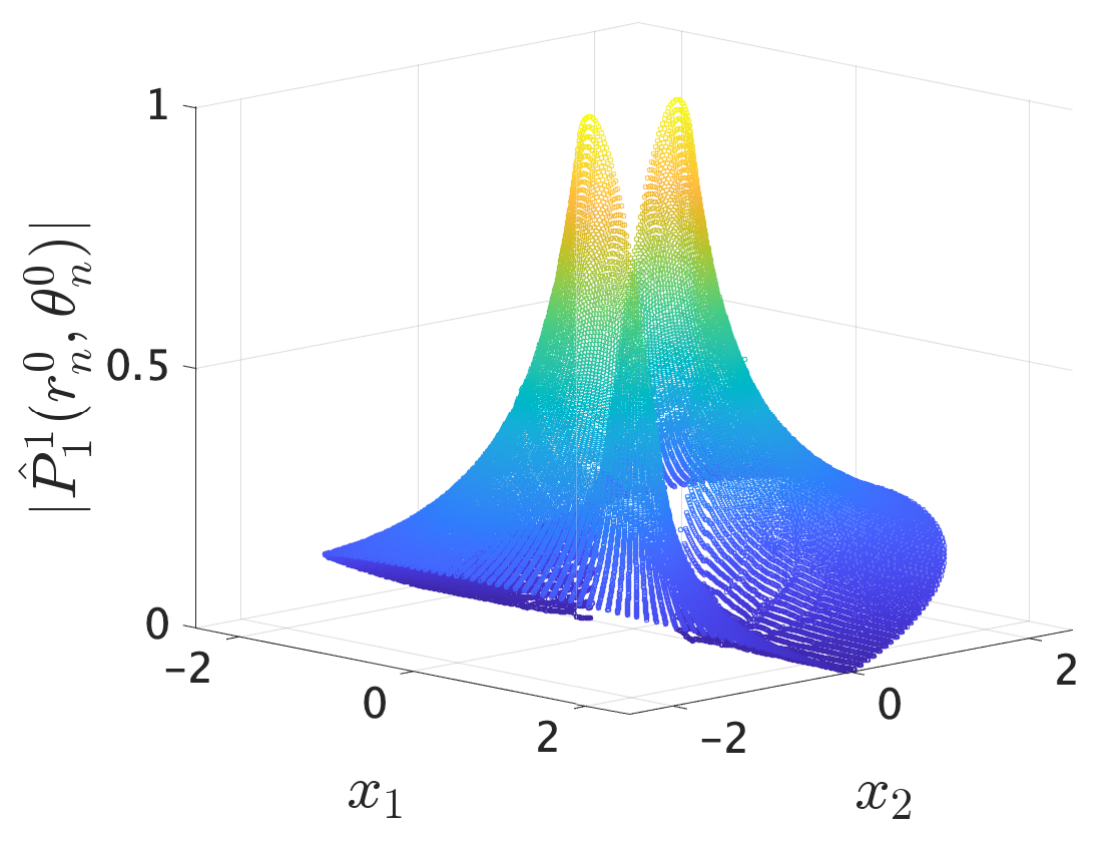}
    \subcaption{Result of $\hat{P}_{1}^{1}(r^{0}_{n},\theta^{0}_{n})$}
    \label{Fig:EX4_Alg_Phi1}
  \end{minipage}
  \begin{minipage}[b]{0.45\linewidth}
    \centering
    \includegraphics[width=\hsize]{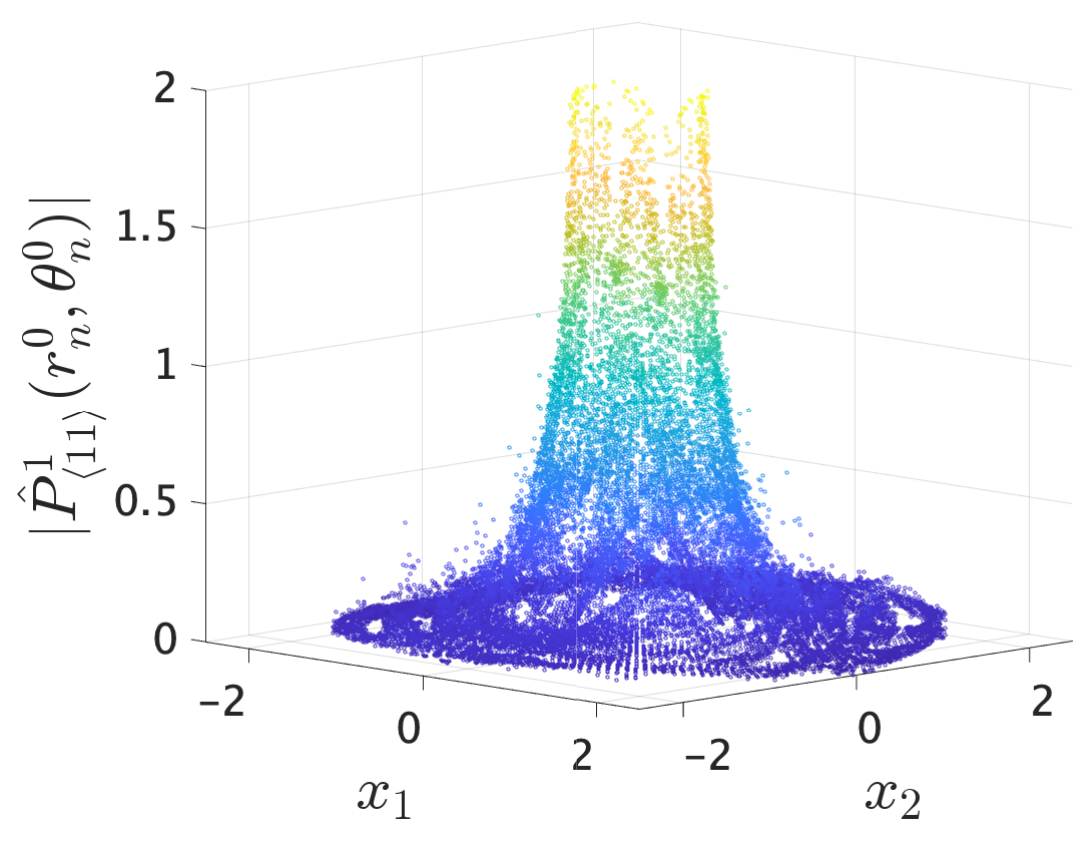}
    \subcaption{Result of $\hat{P}^1_{{\langle 11\rangle}}(r^{0}_{n}, \theta^{0}_{n})$}
     \label{Fig:EX4_Alg_Phi1-1}
  \end{minipage}
   \begin{minipage}[b]{0.45\linewidth}
    \centering
    \includegraphics[width=\hsize]{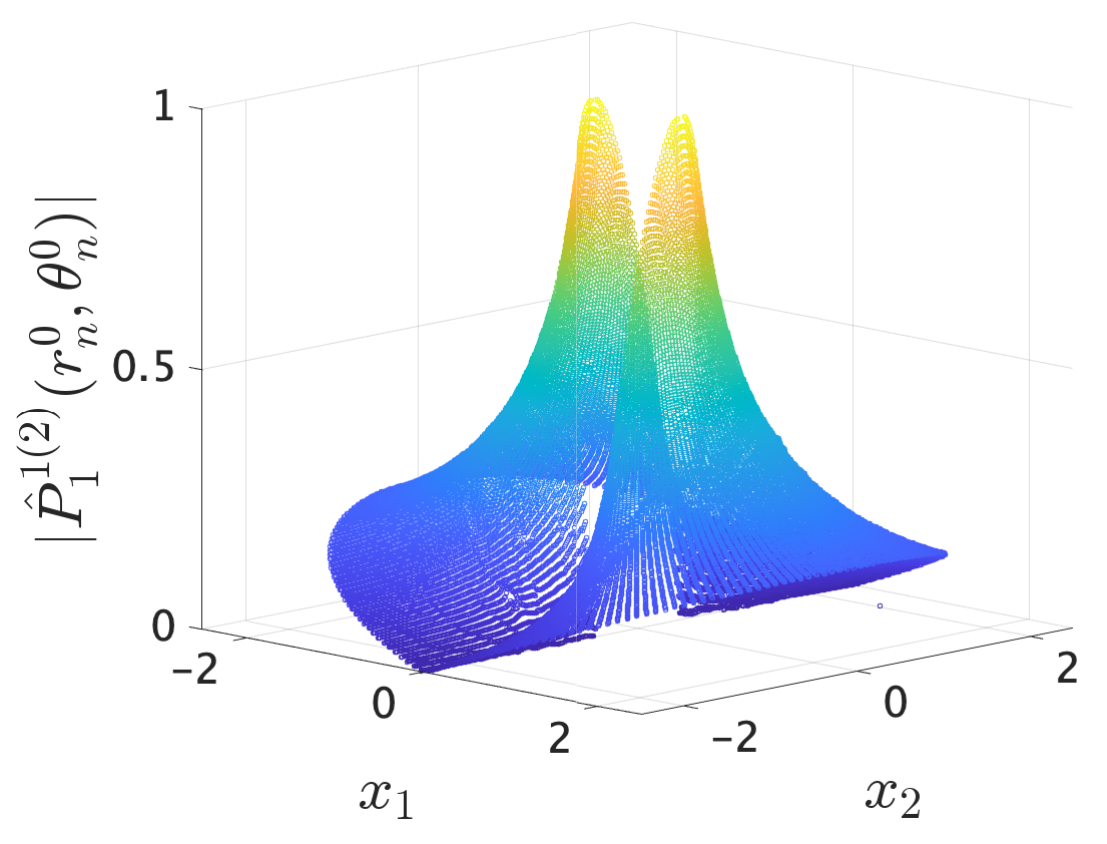}
    \subcaption{Result of $\hat{P}^{1(2)}_{1}(r^{0}_{n},\theta^{0}_{n})$}
     \label{Fig:EX4_Alg_Phi1_GP}
  \end{minipage}
  \caption{
  Computational results of the PFs $\hat{P}_{1}^{1}(r^{0}_{n},\theta^{0}_{n})$ and $\hat{P}^1_{{\langle 11\rangle}}(r^{0}_{n},\theta^{0}_{n})$, and the mode-in-state GP $\hat{P}_{1}^{1(2)}(r^{0}_{n},\theta^{0}_{n})$ for the nonlinear system \eqref{EX2_System} with the stable limit cycle. 
  The result of $\hat{P}^1_{{\langle 11\rangle}}(r^{0}_{n},\theta^{0}_{n})$ is depicted in $|\hat{P}^1_{{\langle 11\rangle}}(r^{0}_{n},\theta^{0}_{n})| \leq 2$ because many of the computational results were in this range. 
  For the mode $z_1$ of the rotational component, the numerical results $\hat{P}_{1}^{1}(r^{0}_{n},\theta^{0}_{n})$ and $\hat{P}_{1}^{1(2)}(r^{0}_{n},\theta^{0}_{n})$ are consistent with the true ${P}_{1}^{1}(r,\theta)$ and ${P}_{1}^{1(2)}(r,\theta)$ in Fig.~\ref{Figs_EX2}.
  {For the mode $z_{{\langle 11\rangle}}$ of complex {component},  
  although at some points no $\hat \lambda$ close to the target $\lambda_j$ is obtained, the numerical result is similar to the true ${P}^1_{{\langle 11\rangle}}(r,\theta)$ in Fig.~\ref{Figs_EX2}.} 
  }
  \label{Fig:EX4_Alg_PF}
\end{figure*}

Figure~\ref{Fig:EX4_Alg_Phi1} shows the computational result of the PF ${\hat P}_{1}^{1}(r^{0}_{n},\theta^{0}_{n})$. 
In the figure, $r^{0}$ is in the range $[0.5, 2.5]$, $\theta^{0}$ is in the range $[-\pi, \pi)$, $\Delta$ is set to $10^{-6}$, and the computation is done for $N=201 \times 201$ initial points $(r^0_n,\theta^0_n)$.  
To estimate the PFs, for each point, we generated the time series data $\{{\bf y}[0], \ldots, {\bf y}[99]\}$ through a uniform sampling $h = 0.1$, by solving \eqref{EX2_System} with a {MATLAB} solver. 
We see that the computational result $\hat{P}_{1}^{1}(r^0_n,\theta^0_n)$ is consistent with the true PF ${P^1_{1}}(r,\theta) = -{\rm i}\frac{\sin \theta}{2r}{\rm e}^{{\rm i}\theta} $ in Fig.~\ref{Fig:EX2_PF}.
In this experiment, $err(P^{1}_{1}) = 0.0017$ is obtained, implying the correctness of the computational result of the PFs.

Figure~\ref{Fig:EX4_Alg_Phi1-1} shows the computational result of the PF $\hat{P}_{{\langle 11\rangle}}^{1}(r^{0}_{n},\theta^{0}_{n})$.
The computational setting is the same as for ${\hat P}_{1}^{1}(r^{0}_{n},\theta^{0}_{n})$.
At some points, our computation of $\hat{P}_{{\langle 11\rangle}}^{1}(r^0_n,\theta^0_n)$ was not performed because any estimated Koopman eigenvalue $\hat \lambda_j$ did not have close values to ${\rm i} - 2$ which is the analytical value of the Koopman eigenvalue associated with $\phi_{1}(r,\theta)^1\phi_{2}(r,\theta)^1$.
Here, $\hat{P}_{{\langle 11\rangle}}^{1}(r^0_n,\theta^0_n)$ is computed at $27588$ points out of $40401$ points.
From Fig.~\ref{Fig:EX4_Alg_Phi1-1}, as for the point where the computation was possible, the result of $\hat{P}_{{\langle 11\rangle}}^{1}(r_n^0,\theta_n^0)$ shows the same characteristic of $P^1_{{\langle 11\rangle}}(r,\theta)$ in Fig.~\ref{Fig:EX2_PF_Complex}. 
Here, $err(P^{1}_{{\langle 11\rangle}}) = 0.1628$ is obtained, where the mean error is mainly due to the difficulty of {the} DMD computation inside the LC. 
When we set $r$ in the range $[1, 2.5]$ without changing other settings, $err(P^{1}_{{\langle 11\rangle}}) = 0.0397$ is obtained, which enhances the correctness of the computational result.

Figure~\ref{Fig:EX4_Alg_Phi1_GP} shows the computational result of the mode-in-state GP ${\hat P}_{1}^{1(2)}(r^{0}_{n},\theta^{0}_{n})$. 
We see that the computational result $\hat{P}_{1}^{1(2)}(r^0_n,\theta^0_n)$ is consistent with the true GP $P^{1(2)}_1(r,\theta) = {\rm i}\frac{\cos \theta}{2r}{\rm e}^{{\rm i}\theta}$ in Fig.~\ref{Fig:EX2_GP_MinS}.
In this experiment, $err(P^{1(2)}_{1}) = 0.0017$ is obtained, implying the correctness of the computational result of the mode-in-state GP.

\section{Conclusion}
\label{sec:outro}

This paper proposed novel {definitions} of PFs and GPs for nonlinear systems with stable EPs and LCs. 
To this end, two main ideas were introduced. 
First, we reinterpreted the classical concept of PFs and GPs in LTI systems from the viewpoint of variational dynamics. 
Second, we utilized the Koopman operator framework for nonlinear systems, from which we particularly leveraged the Koopman eigenfunctions and KMD. 
Then, by combining these, we introduced novel definitions of PFs and GPs for nonlinear systems, in which the classical theory of PFs and GPs is regarded as a special case. 
We also developed the numerical method for the proposed PFs without estimating the Koopman eigenfunctions, which is still a challenging issue in data-driven science. 
The proposed algorithm is also applied to mode-in-state GP.
The algorithm is based on the traditional variational equation and can be implemented with many variants of DMD. 
Its effectiveness was demonstrated for the two simple models of the nonlinear systems. 

Several follow-up studies are possible. 
First, the theory developed in this paper is also applied to systems with quasi-periodic attractors without any significant modification: see \cite{Mezic2020} for the Koopman operator theory of quasi-periodic attractors.
Second, developing the numerical method to estimate the state-in-mode GP is promising and left for future work.
Finally, to show that the theory is of technical importance, leveraging the proposed theory into power system problems is an exciting direction. 
This is because of the relevance of participation factors to the analysis and control of electric power grids.
The work in \cite{Takamichi2022} represents a first step in that direction, where we applied participation factors to a nonlinear model of interconnected AC/DC grids and showed its capability of revealing the dynamic interplay between state variables and modes.

\appendix

\section{The modulated Segal-Bargmann space}
\label{app:Segal}

The modulated Segal-Bargmann space is a reproducing kernel Hilbert space, which was originally introduced in \cite{Mezic2020}. 
We here briefly introduce the space based on \cite{Susuki2021}. 
Using the Koopman eigenfunctions, we define the conjugacy in the entire domain $\mathbb{B}({\bf x}^\ast)$ as follows:
\[
{\bf h}: {\mathbb B}({\bf x}^\ast) \rightarrow {\mathbb C}^n, \quad {\bf h}({\bf x}) = (\phi_1({\bf x}), \ldots, \phi_n({\bf x}))^\top,
\]
such that
\[
{\bf h} \circ {\bf S}^t= \exp({{\bf D} t}) \circ {\bf h},
\]
where ${\bf D}:= {\rm diag}(\lambda_1, {\ldots}, \lambda_n)$ is a diagonal matrix consisting of the Koopman principal eigenvalues. 
Here, following pages~2023--2024 in \cite{Susuki2021}, we introduce the modulated Segal-Bargmann space $\mathscr{S}_{\bf h}$ of functions that are entire in ${\bf h}$, i.e., of the form $f=g\circ{\bf h}$, where $g: \mathbb{C}^n\to\mathbb{C}$ is an entire function. 
This space $\mathscr{S}_{\bf h}$ is endowed with the norm and inner product as follows: 
\[
\begin{aligned}
\|{f}\|_{\mathscr{S}_h} 
&= \|{g} \circ {\bf h}\|_{\mathscr{S}_h}
= \frac{1}{\pi^n} \int_{{\mathbb C}^n} |{g}({\bf z})|^2 {\rm e}^{-|{\bf z}^2|}{\rm d}{\bf z} < \infty, \\
\langle{{f}_1, {f}_2}\rangle_{\mathscr{S}_h}  
&= \frac{1}{\pi^n} \int_{{\mathbb C}^n} {g}_1({\bf z}) \overline{{g}_2({\bf z})} {\rm e}^{-|{\bf z}^2|}{\rm d}{\bf z}, 
\quad {f}_i = {g}_i \circ {\bf h}.
\end{aligned}
\]
By working in this space, it is shown (see the same pages in \cite{Susuki2021}) that the series based on the Koopman eigenfunctions (see Lemma~\ref{lemma:KEF_EP}), given by
\[
\begin{aligned}
\mathcal{U}^t f({\bf x})
=& \sum_{j_1,\ldots, j_n \in {\mathbb N}} {\rm e}^{(j_1 \lambda_1 + \cdots +j_n \lambda_n)t} \cdot 
\nonumber\\
& \cdot \phi_1({\bf x})^{j_1}\cdots\phi_n({\bf x})^{j_n}{V}_{\langle j_1\cdots j_n\rangle},~t\geq 0,
\end{aligned}
\]
is pointwise convergent. 
Here, the Koopman mode $V_{\langle j_1\cdots j_n\rangle}$ corresponds to $\langle{f({\bf x}),\phi_1({\bf x})^{j_1}\cdots\phi_n({\bf x})^{j_n}}\rangle_{\mathscr{S}_h}$. 
Moreover, the Koopman eigenfunctions form a complete orthonormal basis, and the spectrum is totally disconnected (no continuous part of the spectrum exists in this space).

\section{Proof of Theorem~\ref{DEF_KMD_PF}}
\label{app:DEF_KMD_PF}

First, we derive \eqref{dx}. 
Under an infinitesimal change ${\delta{\bf x}}$ 
such that ${\bf x}^0+{\delta{\bf x}}\in\mathbb{B}({\bf x}^\ast)$,\footnote{It is possible to choose such $\delta{\bf x}$ because $\mathbb{B}({\bf x}^\ast)$ is open \cite{Chiang1988}.} 
the variation ${\rm d}x_k(t; {\bf x}^0, {\delta{\bf x}})$ in \eqref{x{j_n}onlinear} is represented as
\begin{equation}
{\rm d}x_k(t; {\bf x}^0, {\delta{\bf x}}) 
=
\sum^{n}_{\ell=1} \left.\frac{\partial}{\partial x_\ell}\bar{x}_k(t; {\bf x})\right|_{{\bf x}={\bf x}^0} \delta x_\ell + {\cal O}(\|\delta{\bf x}\|^2), 
\label{dx_app}
\end{equation}
where the partial derivative on the right-hand side is the derivative of $\bar{x}_k(t; {\bf x}^0)$ by the $\ell$-th element $x^0_\ell$ of the initial state ${\bf x}^0$ and ${\cal O}$ is the Landau's symbol. 
The existence of the partial derivative (that is, differentiability with respect to the initial state) is from the existence of smooth Koopman eigenfunctions in Lemma~\ref{lemma:KEF_EP} and the KMD \eqref{KMD_X} in Lemma~\ref{lemma:KMD_EP}. 
Since both ${\bf x}^0$ and ${\bf x}^0+{\delta{\bf x}}$ are in $\mathbb{B}({\bf x}^\ast)$, the variation ${\rm d}x_k(t; {\bf x}^0, {\delta{\bf x}})$ converges to $0$ as $t\to+\infty$. 
By again using the KMD \eqref{KMD_X}, we have 
\begin{align}
{\rm d}x_k(t; {\bf x}^0, {\delta{\bf x}})
&\approx \left.\frac{\partial}{\partial x_k}\bar{x}_k(t; {\bf x})\right|_{{\bf x}={\bf x}^0}\delta x_{k}
+ \sum_{\substack{\ell = 1\\\ell \neq k}}^{n}
\left.\frac{\partial}{\partial x_\ell}\bar{x}_k(t; {\bf x})\right|_{{\bf x}={\bf x}^0} \delta x_\ell 
\notag \\
&= \sum_{j=1}^{n} {\rm e}^{\lambda_j t} \frac{\partial\phi_j}{\partial x_k}({\bf x}^0) V_{{jk}} \delta x_k 
+ \sum_{\substack{\ell = 1\\\ell \neq k}}^{n} 
\sum_{j=1}^{n} {\rm e}^{\lambda_j t} \frac{\partial\phi_j}{\partial x_\ell}({\bf x}^0) V_{{jk}} \delta x_\ell
\notag \\
&+ \sum^\infty_{\substack{{j_1},\ldots,{j_n}\in\mathbb{N}_0 \\ {j_1}+\cdots+{j_n}>1}} 
{\rm e}^{({j_1}\lambda_1+\cdots+{j_n}\lambda_n)t}   
\frac{\partial \phi_{\langle j_1\cdots j_n\rangle}}{\partial x_k}({\bf x}^0)V_{\langle j_1\cdots j_n\rangle k} \delta x_k  \nonumber  \\
&+ \sum_{\substack{\ell = 1\\\ell \neq k}}^{n} 
\sum^\infty_{\substack{{j_1},\ldots,{j_n}\in\mathbb{N}_0 \\ {j_1}+\cdots+{j_n}>1}}
{\rm e}^{({j_1}\lambda_1+\cdots+{j_n}\lambda_n)t}
\frac{\partial \phi_{\langle j_1\cdots j_n\rangle}}{\partial x_\ell}({\bf x}^0) 
V_{\langle j_1\cdots j_n\rangle k} \delta x_\ell. 
\label{dxk_pf_EP}
\end{align}
In \eqref{dxk_pf_EP}, the notation $\approx$ is used because ${\cal O}(\|\delta{\bf x}\|^2)$ in \eqref{dx_app} is omitted. 
By combining \eqref{dxk_pf_EP} with Definitions~\ref{def_PFGP_NL} and \ref{def_PFGP_NL_High}, the derivation of \eqref{dx} is completed. 

Next, we derive \eqref{dz}. 
Under the infinitesimal change $\delta {\bf x}$ such that ${\bf x}^0+{\delta{\bf x}}\in\mathbb{B}({\bf x}^\ast)$, by utilizing \eqref{x{j_n}onlinear}, the variation ${\rm d}z_j(t; {\bf x}^0, {\delta{\bf x}})$ in \eqref{z{j_n}onlinear} is represented as 
\begin{align}
{\rm d}z_j(t) &\approx
\sum_{k=1}^{n} \left. \frac{\partial }{\partial x_k}\bar z_j(t; {\bf x})\right|_{{\bf x}={\bf x}^0}
\delta x_k \nonumber \\
&= 
\sum_{k=1}^{n} \left. \frac{\partial }{\partial x_k}\bar z_j(t; {\bf x})\right|_{{\bf x}={\bf x}^0} 
{\rm d} x_k(0; {\bf x}^0, {\delta {\bf x}}) 
\label{dz_app}
\end{align}
where the partial derivative on the right-hand side is the derivative of $\bar z_j(t; {{\bf x}^0})$ by the $k$-th element $x_k^0$ of the initial state ${\bf x}^0$.
The existence of the partial derivative (that is, differentiability with respect to the initial state) is from the existence of smooth Koopman eigenfunctions in Lemma~\ref{lemma:KEF_EP}.
Since the Koopman eigenvalues $\lambda_1,\ldots,\lambda_n$ are negative, the modal evolution $\bar{z}_j(t; {\bf x}^0) = {\rm e}^{\lambda_j t}\phi_j({\bf x}^0)$ converge to $0$ as $t \rightarrow +\infty$.
Hence, the variation ${\rm d}z_j(t)$ also converges to $0$ as $t \rightarrow +\infty$.
Here, by using the KMD \eqref{KMD_X}, we have 
\begin{align}
{\rm d}x_k(0; {\bf x}^0, {\delta {\bf x}}) 
&= x_k(0; {\bf x}^0+{\delta}{\bf x}) - x_k(0; {\bf x}^0) \nonumber \\
&= \sum_{j=1}^n V_{{jk}} \{\phi_j({\bf x}^0+{\delta}{\bf x}) - \phi_j({\bf x}^0)\} \nonumber \\
&+ \sum^\infty_{\substack{{j_1},\ldots,{j_n}\in\mathbb{N}_0 \\ {j_1}+\cdots+{j_n}>1}} \makebox[-1em]{}
V_{\langle j_1\cdots j_n\rangle k} 
\{ {\phi_{\langle j_1\cdots j_n\rangle}} ({\bf x}^0+{\delta}{\bf x}) - 
{\phi_{\langle j_1\cdots j_n\rangle}} ({\bf x}^0)\} \nonumber \\
&= \sum_{j=1}^n V_{{jk}} \delta z_j^0 
+  \sum^\infty_{\substack{{j_1},\ldots,{j_n}\in\mathbb{N}_0 \\ {j_1}+\cdots+{j_n}>1}} V_{\langle j_1\cdots j_n\rangle k} \delta z_{\langle j_1\cdots j_n\rangle}^0,
\label{Eq:dx0}
\end{align}
where $\delta z_j^0$ and $\delta z_{\langle j_1\cdots j_n\rangle}^0$ represent the initial change{s} to the modes associated with the initial change $\delta {\bf x}$, defined in Theorem \ref{lemma:KEF_EP}. 
By substituting \eqref{Eq:dx0} {into \eqref{dz_app}} and using $\bar{z}_{j}(t; {\bf x}^0) = {\rm e}^{\lambda_jt} \phi_{j}({\bf x}^0)$, ${\rm d}z_j(t)$ is expressed as follows:
\begin{align}
{\rm d}z_j(t; {\bf x}^0, {\delta{\bf x}}) 
& \approx \sum_{k=1}^{n} {\rm e}^{\lambda_j t} \frac{\partial \phi_j}{\partial x_k} ({\bf x}^0) V _{{jk}} 
\delta z_j^0 \notag \\
& + \sum_{\substack{i = 1\\i \neq j}}^{n} \sum_{k=1}^{n} {\rm e}^{\lambda_j t} \frac{\partial \phi_j}{\partial x_k}({\bf x}^0) V_{{ik}} 
\delta z_i^0 \notag \\
& + \sum^\infty_{\substack{{j_1},\ldots,{j_n}\in\mathbb{N}_0 \\ {j_1}+\cdots+{j_n}>1}} 
 \sum_{k=1}^{n}  
{\rm e}^{\lambda_j t} \frac{\partial \phi_j}{\partial x_k}({\bf x}^0) V_{\langle j_1\cdots j_n\rangle k} 
\delta z_{\langle j_1\cdots j_n\rangle}^0.
\label{dzj_pf_EP}
\end{align}
Note that the second and third terms on the right-hand side of \eqref{dzj_pf_EP} do not appear for the LTI system. 
By combining \eqref{dzj_pf_EP} with Definitions~\ref{def_PFGP_NL} and \ref{def_PFGP_NL_High}, both the derivation of \eqref{dz} and the proof of the theorem are completed.
%

\section{Proof of Theorem~\ref{Case_LC}}
\label{app_LC}

It is stated in Lemmas~\ref{lemma:KE_LC} and \ref{lemma:mode_LC} that there exist Koopman eigenvalues and eigenfunctions for the nonlinear system \eqref{Nonlinear_ODE} with the stable LC $\Gamma$. 
From \cite{Mezic2020}, by taking $\mathscr{F}$ as the averaging kernel Hilbert space \cite{Mezic2020} defined on $\mathbb{B}(\Gamma)$,  for all ${\bf x}^0 \in \mathbb{B}(\Gamma)$ and $t \geq 0$, the flow of the system is represented as follows: 
\begin{align}
{\bf {\bar x}} (t) 
&= {\bf x}^\ast + \sum_{j=1}^{n} {\rm e}^{\lambda_j t} \phi_j ({\bf x}^0) {\bf V}_{j}
+  
\sum_{\mathcal{I}}
{\rm e}^{(  j_1 \lambda_1 + \cdots+j_n\lambda_{n} )t}
\phi_1^{j_1}({\bf x}^0) \cdots \phi_n^{j_n}({\bf x}^0)
{\bf V}_{\langle j_1\cdots j_n\rangle},
\label{eqn:KMD_LC}
\end{align}
where $\mathcal{I}=\mathcal{I}_+\cup \mathcal{I}_{-}$ are defined in Lemma~\ref{lemma:KE_LC}, and ${\bf V}_{j}$ and ${\bf V}_{\langle j_1\cdots j_n\rangle}$ are the Koopman modes determined with the inner product equipped on $\mathscr{F}$. 
The constant $x^\ast_k$ of ${\bf x}^\ast$ is the time average of $x_k(t)$ and corresponds to the inner product of a constant function, which is a Koopman eigenfunction of eigenvalue $0$, and $f({\bf x})=x_k$. 
See \cite{Susuki2021} for a rigor convergence proof of \eqref{eqn:KMD_LC}. 
In the same manner as {Appendix~}\ref{app:DEF_KMD_PF}, the variations ${\rm d}x_k(t; {\bf x}^0, \delta{\bf x})$ in \eqref{x{j_n}onlinear} and ${\rm d}z_j(t; {\bf x}^0,\delta{\bf x})$ in \eqref{z{j_n}onlinear} are represented as follows:
\begin{align}
{\rm d}x_k(t; {\bf x}^0, {\delta{\bf x}}) 
&\approx \left.\frac{\partial}{\partial x_k}\bar{x}_k(t; {\bf x})\right|_{{\bf x}={\bf x}^0}\delta x_{k}
+ \sum_{\substack{\ell = 1\\\ell \neq k}}^{n}
\left.\frac{\partial}{\partial x_\ell}\bar{x}_k(t; {\bf x})\right|_{{\bf x}={\bf x}^0} \delta x_\ell 
\notag \\
&= \sum_{j=1}^{n} {\rm e}^{\lambda_j t} \frac{\partial \phi_j}{\partial x_k}({\bf x}^0) V_{j k} \delta x_k 
+ \sum_{\substack{\ell = 1\\\ell \neq k}}^{n} 
\sum_{j=1}^{n} {\rm e}^{\lambda_j t} \frac{\partial\phi_j}{\partial x_\ell}({\bf x}^0) V_{{jk}} \delta x_\ell
\notag \\
&+ 
\sum_{\mathcal{I}} 
{\rm e}^{(j_1 \lambda_1 +\cdots+j_n\lambda_{n})t}
\frac{\partial \phi_{\langle j_1\cdots j_n\rangle}}{\partial x_k}({\bf x}^0)V_{\langle j_1\cdots j_n\rangle k} \delta x_k  \nonumber  \\
&+ \sum_{\substack{\ell = 1\\\ell \neq k}}^{n} 
\sum_{\mathcal{I}} 
{\rm e}^{(j_1 \lambda_1 +\cdots+j_n\lambda_{n})t}
\frac{\partial \phi_{\langle j_1\cdots j_n\rangle}}{\partial x_\ell}({\bf x}^0)V_{\langle j_1\cdots j_n\rangle k} \delta x_\ell,
\label{dxk_pf_LC}
\end{align}
and
\begin{align}
{\rm d}z_j(t; {\bf x}^0, {\delta{\bf x}}) 
&\approx  
\sum_{k=1}^{n} {\rm e}^{\lambda_j t} \frac{\partial \phi_j}{\partial x_k} ({\bf x}^0) V_{{jk}} \delta z_j^0
\notag \\
&  + \sum_{\substack{i = 1\\i \neq j}}^{n} \sum_{k=1}^{n} {\rm e}^{\lambda_j t} \frac{\partial \phi_j}{\partial x_k}({\bf x}^0) V_{{ik}} \delta z_i^0
\notag \\
&  + \sum_{\mathcal{I}} \sum_{k=1}^{n}  
{\rm e}^{\lambda_j t} \frac{\partial \phi_j}{\partial x_k}({\bf x}^0) V_{\langle j_1\cdots j_n\rangle k} \delta z_{\langle j_1\cdots j_n\rangle}^0,
\label{dzj_pf_LC}
\end{align}
where $\delta z_i^0$, $\delta z_j^0$, and $\delta z_{\langle j_1\cdots j_n\rangle}^0$ represent the initial change{s} to the modes associated with the initial change $\delta {\bf x}$ defined in Theorem~\ref{Case_LC}.  
By combining \eqref{dxk_pf_LC} and \eqref{dzj_pf_LC} with the PFs and GPs in Definitions~\ref{def_PFGP_NL} and \ref{def_PFGP_NL_High}, respectively, the proof of Theorem~\ref{Case_LC} is completed.

\section{Procedure of Proposed {Algorithm}}  \label{app_Alg}
The algorithm of the {numerical} method in Section~\ref{sec:NA} is summarized in Algorithm~\ref{alg1}.
In this paper, we utilize the Prony-type DMD \cite{Susuki2015}.
By this algorithm, the PF $P_j^k({\bf x}^{{0}})$ ({and} the mode-in-state GP $P_j^{k(\ell)}{({\bf x}^0})$) at ${\bf x}^0 = {\bf x}[0]$ {are} estimated.
Hence, to compute $P_j^k({\bf x}^{{0}})$ globally in the state space, we use this algorithm for a set of initial states on a predefined grid like the demonstration in Section~\ref{sec:NA}.

\begin{figure}[t]
\begin{algorithm}[H]
    \caption{Computational algorithm to estimate PFs}
    \label{alg1}
    \small
    \textbf{Input: } 
    Time series data ${\bf Y} = [{\bf y}[0], \ldots, {\bf y}[2N-1]]$ that start from ${\bf y}[0]:=[{\bf x}[0]^\top\,{\bm \xi}[0]^\top]^\top$, with sampling period $h$, and initial variation $\Delta$.
    \begin{enumerate}
	\item Form the vector Hankel matrix $\bf H$ and vector $\bf b$ as follows:
	   \begin{align}
   	 {\bf H} &:= 
    	\begin{pmatrix}
   	 {\bf y}[0] & {\bf y}[1] & \cdots & {\bf y}[N-1] \\
   	 {\bf y}[1] &  {\bf y}[2] & \cdots  & {\bf y}[N]\\
   	 \vdots & \vdots & \ddots & \vdots \\
   	 {\bf y}[N-1] & {\bf y}[N] & \cdots & {\bf y}[2N-2]
 	   \end{pmatrix}, \notag  \\
  	  \quad
   	 {\bf b} &:= -
  	  \begin{pmatrix}
  	  {\bf y}[N]  \\
  	  {\bf y}[N+1]  \\
  	  \vdots \\
 	   {\bf y}[2N-1]
 	   \end{pmatrix}. \notag 
 	   \end{align}  
	    \item Calculate $p_k$ as follows:
	  \begin{equation}
    		{\bf p} = [p_0,\ldots,p_{N-1}]^\top = ({\bf H}^\top {\bf H})^{-1} {\bf H}^\top {\bf b}.   \notag
	  \end{equation}
	\item Form the companion matrix given as 
	  \begin{equation}
   {\bf C}_N := 
    \begin{pmatrix}
    0 & 0 & \cdots & 0 & -p_0  \\
    1 & 0 & \cdots & 0 & -p_1  \\
    0 & 1 & \cdots &0 &  -p_2  \\
    \vdots & \vdots & \ddots & \vdots & \vdots \\
    0 & 0 & \cdots & 1 &  -p_{N-1}  \\
    \end{pmatrix}, \notag
  \end{equation}
  and calculate the eigenvalues of ${\bf C}_N$ which are the DMD eigenvalues $\hat{\rho}_j$.
	\item Form the Vandermonde matrix given as: 
	 \begin{equation}
 	  {\bf T} := 
  	  \begin{pmatrix}
  	  1 & \hat{\rho}_1 & \hat{\rho}_1^2 & \cdots & \hat{\rho}_1^{N-1}  \\
   	 1 & \hat{\rho}_2 & \hat{\rho}_2^2 & \cdots & \hat{\rho}_2^{N-1}  \\
	    \vdots & \vdots & \vdots & \ddots & \vdots \\
 	   1 & \hat{\rho}_{N} & \hat{\rho}_N^2 & \cdots & \hat{\rho}_N^{N-1}  \\
  	  \end{pmatrix}, \notag
  	  \end{equation}
	and calculate $\hat{\bf V}_j$ as follows:
	\begin{equation}
    		\hat{\bf V} = [\hat{\bf V}_1, \ldots, \hat{\bf V}_N] = [{\bf y}[0], \ldots, {\bf y}[N-1]] {\bf T}^{-1}.  \notag
 	 \end{equation}
	\item Calculate the estimated Koopman eigenvalue ${\hat{\lambda}_j} = {\rm ln}({\hat {\rho}_j}) / h$ and the PF ${\hat P}_{j}^{k}{({\bf x}^0)} = \hat{V}_{j \xi_k} / {\Delta}$.
    \end{enumerate}
    \textbf{Output: } $\hat{\lambda}_j$ (estimated Koopman eigenvalue), $\hat{\bf V}_j$ (estimated Koopman mode), ${\hat P}_{j}^{k}({\bf x}^0)$ (estimated PF)
\end{algorithm}
\end{figure}

\section*{Acknowledgment}

The authors appreciate Professor Atsushi Ishigame (Osaka Metropolitan University) for his constant support of the authors' collaboration.


\begin{thebibliography}{10}
\providecommand{\url}[1]{#1}
\csname url@samestyle\endcsname
\providecommand{\newblock}{\relax}
\providecommand{\bibinfo}[2]{#2}
\providecommand{\BIBentrySTDinterwordspacing}{\spaceskip=0pt\relax}
\providecommand{\BIBentryALTinterwordstretchfactor}{4}
\providecommand{\BIBentryALTinterwordspacing}{\spaceskip=\fontdimen2\font plus
\BIBentryALTinterwordstretchfactor\fontdimen3\font minus
  \fontdimen4\font\relax}
\providecommand{\BIBforeignlanguage}[2]{{%
\expandafter\ifx\csname l@#1\endcsname\relax
\typeout{** WARNING: IEEEtran.bst: No hyphenation pattern has been}%
\typeout{** loaded for the language `#1'. Using the pattern for}%
\typeout{** the default language instead.}%
\else
\language=\csname l@#1\endcsname
\fi
#2}}
\providecommand{\BIBdecl}{\relax}
\BIBdecl

\bibitem{PF1_1982}
I.~J. P{\'{e}}rez-Arriaga, G.~C. Verghese, and F.~C. Schweppe, ``{Selective
  modal analysis with applications to electric power systems, Part I: Heuristic
  introduction},'' \emph{IEEE Transactions on Power Apparatus and Systems},
  vol. PAS-101, no.~9, pp. 3117--3125, 1982.

\bibitem{PF2_1982}
G.~C. Verghese, I.~J. P{\'{e}}rez-Arriaga, and F.~C. Schweppe, ``{Selective
  modal analysis with applications to electric power systems, Part II: The
  dynamic stability problem},'' \emph{IEEE Transactions on Power Apparatus and
  Systems}, vol. PAS-101, no.~9, pp. 3126--3134, 1982.

\bibitem{Chow2013}
J.~H. Chow, \emph{{Power System Coherency and Model Reduction}}.\hskip 1em plus
  0.5em minus 0.4em\relax Springer, 2013.

\bibitem{Tzounas2020}
G.~Tzounas, I.~Dassios, and F.~Milano, ``{Modal participation factors of
  algebraic variables},'' \emph{IEEE Transactions on Power Systems}, vol.~35,
  no.~1, pp. 742--750, 2020.

\bibitem{Hashlamoun2009}
W.~A. Hashlamoun, M.~A. Hassouneh, and E.~H. Abed, ``{New results on modal
  participation factors: Revealing a previously unknown dichotomy},''
  \emph{IEEE Transactions on Automatic Control}, vol.~54, no.~7, pp.
  1439--1449, 2009.

\bibitem{Pagola1989}
F.~L. Pagola, I.~J. P{\'{e}}rez-Arriaga, and G.~C. Verghese, ``{On
  sensitivities, residues and participations. applications to oscillatory
  stability analysis and control},'' \emph{IEEE Transactions on Power Systems},
  vol.~4, no.~1, pp. 278--285, 1989.

\bibitem{Garofalo2002}
F.~Garofalo, L.~Iannelli, and F.~Vasca, ``{Participation factors and their
  connections to residues and relative gain array},'' \emph{IFAC Proceedings
  Volumes}, vol.~35, no.~1, pp. 125--130, 2002.

\bibitem{Abed2000}
E.~H. Abed, D.~Lindsay, and W.~A. Hashlamoun, ``{On participation factors for
  linear systems},'' \emph{Automatica}, vol.~36, no.~10, pp. 1489--1496, 2000.

\bibitem{Song2019}
Y.~Song, D.~J. Hill, and T.~Liu, ``{State-in-mode analysis of the power flow
  Jacobian for static voltage stability},'' \emph{International Journal of
  Electrical Power {\&} Energy Systems}, vol. 105, no. 2019, pp. 671--678, 2019.

\bibitem{Dassios2020}
I.~Dassios, G.~Tzounas, and F.~Milano, ``{Participation factors for singular
  systems of differential equations},'' \emph{Circuits, Systems, and Signal
  Processing}, vol.~39, pp. 83--110, 2020.

\bibitem{Iskakov2021}
A.~B. Iskakov and I.~B. Yadykin, ``{Lyapunov modal analysis and participation
  factors applied to small-signal stability of power systems},''
  \emph{Automatica}, vol. 132, p. 109814, 2021.

\bibitem{Sanchez-Gasca2005}
J.~J. Sanchez-Gasca, V.~Vittal, M.~J. Gibbard, A.~R. Messina, D.~J. Vowles,
  S.~Liu, and U.~D. Annakkage, ``{Inclusion of higher order terms for
  small-signal (modal) analysis: Committee report - Task force on assessing the
  need to include higher order terms for small-signal (modal) analysis},''
  \emph{IEEE Transactions on Power Systems}, vol.~20, no.~4, pp. 1886--1904,
  2005.

\bibitem{Hamzi2014}
B.~Hamzi and E.~H. Abed, ``{Local mode-in-state participation factors for
  nonlinear systems},'' in \emph{53rd IEEE Conference on Decision and Control},
  2014, pp. 43--48.

\bibitem{Tian2018}
T.~Tian, X.~Kestelyn, O.~Thomas, H.~Amano, and A.~R. Messina, ``{An accurate
  third-order normal form approximation for power system nonlinear analysis},''
  \emph{IEEE Transactions on Power Systems}, vol.~33, no.~2, pp. 2128--2139,
  2018.

\bibitem{Hamzi2020}
B.~Hamzi and E.~H. Abed, ``{Local modal participation analysis of nonlinear
  systems using Poincar{\'{e}} linearization},'' \emph{Nonlinear Dynamics},
  vol.~99, pp. 803--811, 2020.

\bibitem{Mezic2013}
I.~Mezi{\'{c}}, ``{Analysis of fluid flows via spectral properties of the
  Koopman operator},'' \emph{Annual Review of Fluid Mechanics}, vol.~45, no.~1,
  pp. 357--378, 2013.

\bibitem{KoopmanBook}
A.~Mauroy, I.~Mezi{\'{c}}, and Y.~{Susuki (eds.)}, \emph{{The Koopman Operator
  in Systems and Control: Concepts, Methodologies, and Applications}}.\hskip
  1em plus 0.5em minus 0.4em\relax Springer, 2020.

\bibitem{Otto2021}
S.~E. Otto and C.~W. Rowley, ``{Koopman operators for estimation and control of
  dynamical systems},'' \emph{Annual Review of Control, Robotics, and
  Autonomous Systems}, vol.~4, no.~1, pp. 59--87, 2021.

\bibitem{Bevanda2021}
P.~Bevanda, S.~Sosnowski, and S.~Hirche, ``{Koopman operator dynamical models:
  Learning, analysis and control},'' \emph{Annual Reviews in Control}, vol.~52,
  no. 1367-5788, pp. 197--212, 2021.

\bibitem{Brunton2022}
S.~L. Brunton, M.~Budi{\v{s}}i{\'{c}}, E.~Kaiser, and J.~N. Kutz, ``{Modern
  Koopman theory for dynamical systems},'' \emph{SIAM Review}, vol.~64, no.~2,
  pp. 229--340, 2022.

\bibitem{Mauroy2013}
A.~Mauroy, I.~Mezi{\'{c}}, and J.~Moehlis, ``{Isostables, isochrons, and
  Koopman spectrum for the action-angle representation of stable fixed point
  dynamics},'' \emph{Physica D: Nonlinear Phenomena}, vol. 261, pp. 19--30,
  2013.

\bibitem{Mezic2020}
I.~Mezi{\'{c}}, ``{Spectrum of the Koopman operator, spectral expansions in
  functional spaces, and state-space geometry},'' \emph{Journal of Nonlinear
  Science}, vol.~30, pp. 2091--2145, 2020.

\bibitem{Parker1989}
T.~S. Parker and L.~O. Chua, \emph{{Practical Numerical Algorithms for Chaotic
  Systems.}}\hskip 1em plus 0.5em minus 0.4em\relax New York: Springer, 1989.

\bibitem{Pikovsky2016}
A.~Pikovsky and A.~Politi, \emph{{Lyapunov Exponents: A Tool to Explore Complex
  Dynamics}}.\hskip 1em plus 0.5em minus 0.4em\relax Cambridge University
  Press, 2016.

\bibitem{Shirasaka2017}
S.~Shirasaka, W.~Kurebayashi, and H.~Nakao, ``{Phase-amplitude reduction of
  transient dynamics far from attractors for limit-cycling systems},''
  \emph{Chaos}, vol.~27, p. 023119, 2017.

\bibitem{Tsukamoto2021}
H.~Tsukamoto, S.~J. Chung, and J.~J.~E. Slotine, ``{Contraction theory for
  nonlinear stability analysis and learning-based control: A tutorial
  overview},'' \emph{Annual Reviews in Control}, vol.~52, pp. 135--169, 2021.

\bibitem{Mezic2005}
I.~Mezi{\'{c}}, ``{Spectral properties of dynamical systems, model reduction
  and decompositions},'' \emph{Nonlinear Dynamics}, vol.~41, pp. 309--325,
  2005.

\bibitem{Rowley2009}
C.~W. Rowley, I.~Mezi{\'{c}}, S.~Bagheri, P.~Schlatter, and D.~S. Henningson,
  ``{Spectral analysis of nonlinear flows},'' \emph{Journal of Fluid
  Mechanics}, vol. 641, pp. 115--127, 2009.

\bibitem{Takamichi2022}
K.~Takamichi, Y.~Susuki, M.~Netto, and A.~Ishigame, ``{A mode-in-state
  contribution factor based on Koopman operator and its application to power
  system analysis},'' \emph{Nonlinear Theory and Its Applications, IEICE},
  vol.~13, no.~2, pp. 409--414, 2022.

\bibitem{Zheng2022}
L.~Zheng, X.~Liu, Y.~Xu, W.~Hu, and C.~Liu, ``{Data-driven estimation for
  region of attraction for transient stability using Koopman operator},''
  \emph{CSEE Journal of Power and Energy Systems}, vol.~9, no.~4, pp.
  1405--1413, 2023.

\bibitem{Kotb2017}
O.~Kotb, M.~Ghandhari, R.~Eriksson, R.~Leelaruji, and V.~K. Sood, ``{Stability
  enhancement of an interconnected AC/DC power system through VSC-MTDC
  operating point adjustment},'' \emph{Electric Power Systems Research}, vol.
  151, pp. 308--318, 2017.

\bibitem{Netto2021}
M.~Netto, Y.~Susuki, V.~Krishnan, and Y.~Zhang, ``{On analytical construction
  of observable functions in extended dynamic mode decomposition for nonlinear
  estimation and prediction},'' \emph{IEEE Control Systems Letters}, vol.~5,
  no.~6, pp. 1868--1873, 2021.

\bibitem{Kutz2016}
J.~N. Kutz, S.~L. Brunton, B.~W. Brunton, and J.~L. Proctor, \emph{{Dynamic
  Mode Decomposition: Data-Driven Modeling of Complex Systems}}.\hskip 1em plus
  0.5em minus 0.4em\relax SIAM, 2016.

\bibitem{Netto2019}
M.~Netto, Y.~Susuki, and L.~Mili, ``{Data-driven participation factors for
  nonlinear systems based on Koopman mode decomposition},'' \emph{IEEE Control
  Systems Letters}, vol.~3, no.~1, pp. 198--203, jan 2019.

\bibitem{Kunder1994}
P.~Kundur, \emph{{Power System Stability And Control}}.\hskip 1em plus 0.5em
  minus 0.4em\relax McGraw-Hill, 1994.

\bibitem{Susuki2021}
Y.~Susuki, A.~Mauroy, and I.~Mezi{\'{c}}, ``{Koopman resolvent: A
  Laplace-domain analysis of nonlinear autonomous dynamical Systems},''
  \emph{SIAM Journal on Applied Dynamical Systems}, vol.~20, no.~4, pp.
  2013--2036, 2021.

\bibitem{Kvalheim2021}
M.~D. Kvalheim and S.~Revzen, ``{Existence and uniqueness of global Koopman
  eigenfunctions for stable fixed points and periodic orbits},'' \emph{Physica
  D: Nonlinear Phenomena}, vol. 425, p. 132959, 2021.

\bibitem{Mauroy2018}
A.~Mauroy and I.~Mezi{\'{c}}, ``{Global computation of phase-amplitude
  reduction for limit-cycle dynamics},'' \emph{Chaos}, vol.~28, no.~7, p.
  073108, 2018.

\bibitem{Brunton2016}
S.~L. Brunton, B.~W. Brunton, J.~L. Proctor, and J.~N. Kutz, ``{Koopman
  invariant subspaces and finite linear representations of nonlinear dynamical
  systems for control},'' \emph{PLoS ONE}, vol.~11, no.~2, pp. 1--19, 2016.

\bibitem{Gaspard1995}
P.~Gaspard, G.~Nicolis, A.~Provata, and S.~Tasaki, ``{Spectral signature of the
  pitchfork bifurcation: Liouville equation approach},'' \emph{Physical Review
  E}, vol.~51, no.~1, pp. 74--94, 1995.

\bibitem{Williams2015}
M.~O. Williams, I.~G. Kevrekidis, and C.~W. Rowley, ``{A data^^e2^^80^^93driven
  approximation of the Koopman operator: Extending dynamic mode
  decomposition},'' \emph{Journal of Nonlinear Science}, vol.~25, no.~6, pp.
  1307--1346, 2015.

\bibitem{Klus2016}
S.~Klus, P.~Koltai, and C.~Sch{\"{u}}tte, ``{On the numerical approximation of
  the Perron-Frobenius and Koopman operator},'' \emph{Journal of Computational
  Dynamics}, vol.~3, no.~1, pp. 51--79, 2016.

\bibitem{Mauroy2015}
A.~Mauroy, F.~Forni, and R.~Sepulchre, ``{An operator-theoretic approach to
  differential positivity},'' in \emph{2015 54th IEEE Conference on Decision
  and Control (CDC)}.\hskip 1em plus 0.5em minus 0.4em\relax IEEE, 2015, pp.
  7028--7033.

\bibitem{Susuki2015}
Y.~Susuki and I.~Mezi{\'{c}}, ``{A Prony approximation of Koopman mode
  decomposition},'' in \emph{2015 54th IEEE Conference on Decision and Control
  (CDC)}, 2015, pp. 7022--7027.

\bibitem{Chiang1988}
H.~D. Chiang, M.~W. Hirsch, and F.~F. Wu, ``{Stability regions of nonlinear
  autonomous dynamical systems},'' \emph{IEEE Transactions on Automatic
  Control}, vol.~33, no.~1, pp. 16--27, 1988.

\end{thebibliography}



\end{document}